\theoremstyle{plain}
\newtheorem{thm}{Theorem}[section]
\newtheorem{lem}[thm]{Lemma}
\newtheorem{prop}[thm]{Proposition}
\newtheorem{cor}[thm]{Corollary}
\newcommand{\R}{\mathbb{R}}
\newcommand{\Z}{\mathbb{Z}}
\newcommand{\N}{\mathbb{N}}
\newcommand{\T}{\mathbb{T}}
\newcommand{\sgn}{\operatorname{sgn}}
\newcommand{\vep}{\varepsilon}
\newcommand{\Leb}{\ensuremath{\lambda}}
\newcommand{\spa}{\operatorname{span}}
\def\cC{\mathcal{C}}
\newcommand{\Log}[1]{\mathcal{L}\textrm{og} \left( #1 \right)}
\newcommand{\SymLog}[1]{\mathcal{S}\textrm{ym}\mathcal{L}\textrm{og} \left( #1 \right)}
\newcommand{\pLog}[1]{\mathcal{L}\textrm{og}^\textrm{p} \left( #1 \right)}
\newcommand{\pSymLog}[1]{\mathcal{S}\textrm{ym}\mathcal{L}\textrm{og}^\textrm{p} \left( #1 \right)}
\theoremstyle{definition}
\newtheorem{rem}{Remark}[section]
\newtheorem{defn}{Definition}[section]
\newcommand{\ep}{\varepsilon}
\def\cC{\mathcal{C}}
\title[Singularity of the spectrum in genus two]{Singularity of the spectrum for smooth area-preserving flows in genus two and translation surfaces well approximated by cylinders}
\author[J.\ Chaika]{Jon Chaika}
\address{Department of Mathematics, University of Utah, 155 South 1400 East, Salt Lake City, Utah, 84112 USA}
\email{chaika@math.utah.edu }
\author[K.\ Fr\k{a}czek]{Krzysztof Fr\k{a}czek}
\address{Faculty of Mathematics and Computer Science, Nicolaus
Copernicus University, ul.\ Chopina 12/18, 87-100 Toru\'n, Poland}
\email{fraczek@mat.umk.pl}
\author[A.\ Kanigowski]{Adam Kanigowski}
\address{Department of Mathematics, University of Maryland, 4176 Campus Drive, William E.\ Kirwan Hall,
College Park, MD 20742-4015, USA}
\email{akanigow@umd.edu}
\author[C.\ Ulcigrai]{Corinna Ulcigrai}
\address{Institut f\"ur Mathematik, Universit\"at Z\"urich, Winterthurerstrasse 190,
CH-8057 Z\"urich, Switzerland}
\email{corinna.ulcigrai@math.uzh.ch}
\subjclass[2000]{37A10, 37E35, 37A30, 37C10, 37D40, 37F30, 37N05}
\keywords{Measure-preserving flows on surfaces, spectral theory of dynamical systems, Birkhoff sums, cylinders on translation surfaces}
\begin{document}

\begin{abstract}
We consider smooth flows preserving a smooth invariant measure, or, equivalently, locally Hamiltonian flows on compact orientable surfaces and show that, when the genus of the surface is two, almost every such locally Hamiltonian flow with two non-degenerate isomorphic saddles has singular spectrum.

More in general, singularity of the spectrum holds for special flows over a full measure set of interval exchange transformations with a hyperelliptic permutation (of any number of exchanged intervals), under a roof with symmetric logarithmic singularities. The result is proved using a criterion for singularity based on tightness of Birkhoff sums with exponential tails decay.

A key ingredient in the proof, which is of independent interest, is a result on translation surfaces well approximated by single cylinders. We show that for almost every translation surface in any connected component of any stratum there exists a full measure set of directions which can be well approximated by a single cylinder of area arbitrarily close to one.  The result, in the special case of the stratum $\mathcal{H}(1,1)$, yields rigidity sets needed for the singularity result.
\end{abstract}

\maketitle

\medskip

\begin{flushright}
{\it dedicated to Anatole Katok}
\end{flushright}

\bigskip

This paper provides a first general result on the nature of the spectrum for typical smooth area-preserving flows on surfaces of higher genus. Area-preserving flows are one of the most basic examples of dynamical systems, studied since Poincar{\'e} at the  dawn  of the study of dynamical systems.
We consider the natural class of smooth flows preserving a smooth invariant measure on surfaces of genus $g\geq 1$, also  known as  \emph{locally Hamiltonian flows} (see  \S\ref{sec:locHam}) or equivalently multivalued Hamiltonian flows. The study of locally Hamiltonian flows  has been pushed since the $1990s$ by Novikov and his school for its connection with solid state physics and pseudo-periodic topology (see e.g.~\cite{No:the} and \cite{Zo:how}). Locally Hamiltonian flows arise indeed in the Novikov model of motion of an electron in a metal under a magnetic field - in this semi-classical approximation, the (compact) surface which constrains the motion is then the (quotient of the) periodic Fermi energy level surface of the metal. Basic ergodic properties (such as minimality and ergodicity) of such flows can be deduced\footnote{Locally Hamiltonian flows (when minimal, or restricted to a minimal component) can indeed be seen as \emph{singular time-reparametrizations} of translation flows (see Remark~\ref{rem:reparametrization}) and  properties such as ergodicity and minimality depend only on the flow orbits and not on the time-reparametrization.} from classical results (such as~\cite{Keane,Ma:int, Ve:gau}) on translation flows (which are well understood thanks to the connection with Teichm{\"u}ller dynamics, see e.g.~\cite{AF:wea, Ma:erg}). On the other hand, finer ergodic and spectral properties depend on the nature of the locally Hamiltonian parametrization and on the type of fixed points of the flow.

In the past decades, there have been many advances  in our understanding of finer ergodic properties of locally Hamiltonian flows, in particular mixing and rigidity properties, starting from a conjecture by Arnold on mixing in locally Hamiltonian flows in genus one (see \cite{Ar:top} and \cite{SK:mix}), which led naturally to the study of mixing (and weak mixing) in higher genus locally Hamiltonian flows \cite{Ul:mix, Ul:wea, Sch:abs, Ul:abs, Ra:mix}, up to recent results on mixing of all orders \cite{FK, KKU} and disjointness phenomena \cite{KLU, BK}, some of which were  achieved adapting to the world of smooth flows with singularities tools inspired from homogeneous dynamics and the work of Marina Ratner (a quick review of the known result is presented in Section~\ref{sec:history}).

The {spectral properties (and in particular what is the spectral type}, see \S\ref{sec:spectral} for definitions) of locally Hamiltonian flows is a natural question, which has been lingering for decades (see e.g.~\cite[Section 6]{KT} and \cite{L})\footnote{The result on the singular nature of the spectrum proved in this paper was furthermore explicitly suggested by A.~Katok to A.~Kanigowski~in private communication.}. 
  Results on the spectrum of the operator, though, are very rare. In an early work by Fr\k{a}czek and Lema\'nczyk \cite{Fr-Le0}, spectral properties of  special flows over rotations with single symmetric logarithmic singularity (see \S\ref{sec:reduction}) are examined.  In \cite[Theorem 12]{Fr-Le0} it is shown that (for a full measure set of rotation numbers) such special flows have purely singular continuous spectrum\footnote{The authors in \cite{Fr-Le0} show that special flows over rotations under a symmetric logarithm, for a full measure set of frequencies, are \emph{spectrally disjoint} from all mixing flows (see Theorem~12, from which it follows in particular that the spectrum is purely singular).}. This gives examples of locally Hamiltonian flows on surfaces of any genus $\geq 1$ with  singular continuous spectrum  (see \cite[Theorem 1]{Fr-Le0}).
This result shows that, when one can prove absence of mixing and  some form of (partial) rigidity, it might be possible to deduce singularity of the spectrum. A recent spectral breakthrough, which goes in the opposite direction, was achieved by Fayad, Forni and Kanigowski in \cite{FFK}, who showed that a class of smooth flows on surfaces of genus one (which can also be represented as special flows over rotations, see \S\ref{sec:history}) has countable Lebesgue spectrum. These flows display a strong form of \emph{shearing} of nearby trajectories and
 were proved to be mixing by Kochergin  in the 70's, see \cite{Ko:mix}.

The main result of this paper concerns the nature of  the spectrum of locally Hamiltonian flows on genus two surfaces, and, to the best of our knowledge, is the first general spectral result  for surfaces of higher genus ($g \geq 2)$.

\section{Main results}

We now state the main result on the spectrum of locally Hamiltonian flows on  genus two surfaces (see \S\ref{sec:main_g2}), as well as a result in the language of special flows from which it is deduced, see \S\ref{sec:main_sf}.  The singularity criterion which is used to prove the first two results is stated (and proved) later in the paper, in Section~\ref{sec:singcrit} (as Theorem~\ref{thm:singcrit}). In \S\ref{sec:main_cyl} we state a result on translation surfaces being well approximated by a single cylinder which is used as a key technical tool in the proof, but is also of independent interest, since it concerns Diophantine approximation-type questions for cylinders on translation surfaces in any genus (more precisely, any connected component of any stratum, see \S\ref{sec:main_cyl}).

\subsection{Singularity of the spectrum of locally Hamiltonian flows in genus two}\label{sec:main_g2}
Throughout the paper let $M$ denote a  smooth, compact, connected, orientable surface and  let  $(\varphi_t)$ be  a smooth flow preserving a smooth invariant measure (i.e.\ a measure with smooth positive  density with respect to the area form on $M$). Equivalently,  $(\varphi_t)$ is a  \emph{locally Hamiltonian flow}, see (\S\ref{sec:locHam}). We assume that $(\varphi_t)$ has \emph{non-degenerate} fixed points and is \emph{minimal}. When the surface has genus $g=2$, this implies that there are \emph{two} fixed points, both of which are \emph{simple saddles} (i.e.\ four-pronged saddles, with two incoming and two outgoing separatrices), see Figure~\ref{g2flow}. We will assume furthermore that the two saddles are \emph{isomorphic} (in a sense specified in Section~\ref{sec:singularities}, see Definition~\ref{def:isomorphic}).

 \begin{figure}[h!]
\includegraphics[width=0.6\textwidth]{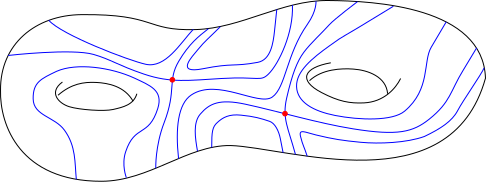}
 \caption{ Trajectories of a locally Hamiltonian flow with two simple saddles on a surface of genus two.\label{g2flow}}
\end{figure}

\begin{thm}[Singular spectrum in genus two]\label{thm:singsp}
A \emph{typical}  locally Hamiltonian flow on a surface $M$ of genus two with two isomorphic saddles has \emph{purely singular} spectrum.
\end{thm}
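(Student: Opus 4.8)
The plan is to deduce Theorem~\ref{thm:singsp} from the corresponding statement for special flows together with the classical reduction of (minimal) locally Hamiltonian flows to special flows over interval exchange transformations.

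\textbf{Step 1: Reduction to a special flow over a hyperelliptic IET with a symmetric logarithmic roof.} First I would fix a segment transverse to $(\varphi_t)$ and pass to the first-return map $T$, an interval exchange transformation on some number $d$ of subintervals, so that $(\varphi_t)$ is measurably isomorphic to the special flow $T^f$ with roof $f$ equal to the return time. Since the two fixed points are non-degenerate simple (four-pronged) saddles, the flow slows down logarithmically near each of them, so $f$ has Kochergin-type logarithmic singularities at the discontinuities of $T$ coming from separatrices that hit the section. The orientation of the flow makes $M$ into a translation surface, and two simple saddles correspond to two simple zeros of the associated abelian differential; hence $T$ lies in the Rauzy class of the stratum $\mathcal{H}(1,1)$, whose permutations are hyperelliptic. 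The hypothesis that the two saddles are \emph{isomorphic} (Definition~\ref{def:isomorphic}) forces the coefficients of the logarithmic asymptotics of $f$ to match on the two sides of each singularity and between the two saddles, i.e.\ $f$ is a roof with \emph{symmetric logarithmic singularities}. Under this correspondence the natural measure class on the space of such locally Hamiltonian flows projects onto the Masur--Veech measure class on the parameters of such IETs, so ``typical flow'' becomes ``$T$ in a full-measure set of hyperelliptic IETs''.

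\textbf{Step 2: Apply the special-flows theorem.} It then suffices to invoke the special-flows result announced in the introduction: for a full-measure set of IETs $T$ with hyperelliptic permutation, the special flow $T^f$ under a roof with symmetric logarithmic singularities has purely singular spectrum. This gives Theorem~\ref{thm:singsp} at once, so the content lies entirely in that special-flows statement, which in turn rests on the singularity criterion of Section~\ref{sec:singcrit}.

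\textbf{Step 3: The special-flows statement via the singularity criterion.} That statement I would prove by checking the hypotheses of Theorem~\ref{thm:singcrit}, which says, roughly, that if there is a sequence of times $t_n\to\infty$ along which the relevant Birkhoff sums of $f$ (equivalently, the displacements $\varphi_{t_n}-\mathrm{Id}$) are \emph{tight with exponentially decaying tails}, then the maximal spectral type is singular. Heuristically such a sequence produces a nontrivial weak limit of the Koopman powers $U_\varphi^{t_n}$ that obstructs absolute continuity of the spectrum (a Markov operator which is not the projection onto constants), while the exponential control on the tails is what converts this partial rigidity into an effective bound on the spectral measures through their characteristic functions.

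\textbf{Step 4: Rigidity sets from cylinders, and the main obstacle.} The rigidity sequence $t_n$ with the required control is manufactured from the cylinder-approximation theorem of \S\ref{sec:main_cyl}: for a.e.\ translation surface in $\mathcal{H}(1,1)$ there is a full-measure set of directions well approximated by a single cylinder of area arbitrarily close to $1$. In the flow direction a single cylinder of almost-full area means that almost all orbits are long and nearly closed with nearly the same period $t_n$, so $\varphi_{t_n}$ is uniformly close to the identity on a set of measure close to $1$, while the thin complement of the cylinder (of small area) governs the tails. The hard part will be twofold: proving the cylinder-approximation statement in the sharp quantitative form needed (a \emph{full-measure} set of directions, area \emph{arbitrarily close} to $1$), which is a genuine Diophantine-approximation problem on the space of translation surfaces; and converting ``well approximated by one cylinder'' into the precise tightness-with-exponential-tails estimate for the logarithmic Birkhoff sums, uniformly along $t_n$. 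The latter is delicate because $f$ blows up near the saddles, so one must control the contribution of orbits passing close to the fixed points on the thin complement of the cylinder; this is exactly where the \emph{symmetry} of the logarithmic singularities is indispensable, since it forces the leading logarithmic divergences to cancel and leaves a bounded, tight remainder.
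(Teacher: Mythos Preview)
Your reduction in Steps~1--2 is exactly the paper's proof of Theorem~\ref{thm:singsp}: Corollary~\ref{cor:reduction} gives the special flow over a symmetric $5$-IET under a roof in $\SymLog{\sqcup I_i}$, Theorem~\ref{thm:ss_sf} gives singular spectrum for a.e.\ such IET, and Remark~\ref{rem:fullmeasure} transfers this back to the Katok measure class on $\mathcal{K}$.

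Your sketch of Theorem~\ref{thm:ss_sf} in Steps~3--4 is broadly right but misidentifies the cancellation mechanism. The symmetry of the \emph{roof} alone is not what forces the logarithmic divergences to cancel; rather, the paper combines the roof symmetry $f'\circ T^{-1}\circ S=-f'$ with the base symmetry $T\circ S=S\circ T^{-1}$ coming from the hyperelliptic involution (Lemma~\ref{lem:symmetries}) to get $S_{h_n}(f')(T^{-h_n}x_0)=-S_{h_n}(f')(x_0)$ at a Weierstrass point $x_0$ (Lemma~\ref{lem:cancellations}, Corollary~\ref{cor:x0}). One then arranges that $x_0$ lies on the core curve of the approximating cylinder, so both $x_0$ and $T^{-h_n}x_0$ sit well inside the base $J_n$; the intermediate value theorem then locates a zero $x_n$ of $S_{h_n}(f')$ away from $\partial J_n$, and it is this control on the \emph{location} of the critical point that yields uniform exponential tails via Lemma~\ref{lem:ad0} and Corollary~\ref{cor:exptails}. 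Also, the tails are not governed by the thin complement of the cylinder (that set is simply discarded, $\mu(C_n)\to 1$); they come from points near the edges of the base $J_n$, where some $T^j x$ is close to a singularity $\beta_i$. Finally, the quantitative recurrence rate $\epsilon_n=1/(q_n\log q_n)$ in the good-rigidity Definition~\ref{def:IETrecurrence} is not incidental: it is precisely what is needed to bound the \emph{incomplete} Birkhoff sums $|S_h(f)(y)-S_h(f)(T^{h_n}y)|$ for $0\le h<h_n$ (Step~2 of Lemma~\ref{thm:exptails}), which only satisfy the asymmetric-type estimate \eqref{eq:Sf}.
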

 Basic spectral notions, and in particular the  definition of \emph{singular spectrum}, are recalled in \S\ref{sec:spectral}. The notion of \emph{typical} used here is in a \emph{measure theoretic sense} and it refers to a full measure set with respect to a natural measure class on locally Hamiltonian flows with given singularity types (sometimes referred to as the Katok fundamental class). 
{For the definition of the measure class and the notion of typical used in the statement of Theorem~\ref{thm:singsp}, see  \S\ref{sec:isomorphic} (and also more in general \S\ref{sec:measureclass}).}

Theorem \ref{thm:singsp} is the first result on singularity of the spectrum of typical minimal locally Hamiltonian flows with non-degenerate singularities  on surfaces in higher genus (and, to the best of our knowledge, the first  general spectral result for smooth flows on surfaces of genus $g\geq 2$). We believe that the result is not only true in genus two, but in any genus $g\geq 2$.  The importance of considering the case of genus two (to deal with some of key difficulties arising when passing from genus one to higher genus, or, in other words, from Poincar{\'e} sections which are rotations to interval exchange transformations), as well as the importance of the assumption  that saddles are isomorphic for the strategy and techniques  of proof will be  explained in  \S\ref{sec:strategy} below.

Singularity of the spectrum is in stark contrast with the recent result in \cite{FFK} on flows on tori with a degenerated singularity (or stopping point), which are shown to have absolutely continuous (and actually countable Lebesgue) spectrum. It might be conjectured, from their result, that also in higher genus, in presence of sufficiently strong \emph{degenerate} singular points, the spectrum is also absolutely continuous (and even countable Lebesgue).
 We remark that stopping points or non-degenerate fixed points (including centers) are known to produce mixing \cite{Ko:mix} (at rates which are expected to be polynomial, see e.g.~\cite{Fa1}), while typical minimal locally Hamiltonian flows with  \emph{non-degenerate} saddles are  known \emph{not} to be mixing  by the work of Scheglov \cite{Sch:abs} for genus two and Ulcigrai \cite{Ul:abs} for any genus. At the heart of our proof is a strengthening of results on absence of mixing (in particular of the works \cite{Fr-Le0,Fr-Le2} and \cite{Sch:abs}). When the flow is not minimal, 
 and has non-degenerate singularities it has 
 \emph{several} minimal components, and the nature of the spectrum (for the restriction of a typical flow to a minimal component) is unclear. These flows are indeed mixing, but with sub-polynomial rate (see \cite{Ra:mix}, which provides logarithmic upper bounds) and it is not clear whether to expect singularity or  absolute continuity of the spectrum.

\subsection{Special flows with symmetric logarithmic singularites over symmetric IETs}\label{sec:main_sf}
Formally, Theorem \ref{thm:singsp}  is deduced from a result for special flows (see below, or \S\ref{sec:sfdef} for formal definitions). It is 
  well known that  any  minimal (or minimal component of) locally Hamiltonian flow can be represented as \emph{the special  flow} over an  \emph{interval exchange transformations} or, for short, IET (see Section~\ref{sec:locHam_sf} for definitions and for the reduction). Our main result, that {certain} special flows have singular spectrum 
holds for IETs on \emph{any} number of intervals   in a special class (corresponding to \emph{symmetric} permutations, or hyperelliptic strata).  Let us give some definitions to formulate the precise statement.

\smallskip
An interval exchange transformation (IET) of $d$ intervals $T:I\to I$ ($I=[0,|I|)$)\footnote{We usually assume that $|I|\leq 1$}  \emph{with permutation} $\pi$ (on $\{0,\dots, d-1\}$)  and \emph{endpoints} (of the continuity intervals)
$0=:\beta_0 < \beta_1 < \dots \beta_{d-1}< \beta_d:= |I|$
is a piecewise isometry which sends the interval $I_i:=[\beta_i,\beta_{i+1})$, for $0\leq i<d$,
by a translation, explicitly given by
\[
T(x) = x-\beta_i +\beta_{\pi(i)}, \qquad \text{if }\ x \in [\beta_i,\beta_{i+1}).
\]
We say that $\pi:\{0,1,\dots, d-1\} \to \{0,1,\dots, d-1\}$ is \emph{symmetric} if $\pi(i) = d-1-i$ for $0\leq i <d$. Thus, in an IET with  a  symmetric permutation the order of the exchanged intervals is \emph{reversed}. These are IETs which arise when considering (suitably chosen Poincar{\'e} sections of translation flows) \emph{hyperelliptic strata} of translation surfaces, in particular for any genus $g \geq 1$  (see for example Lemma~\ref{lemma:reduction}).

We say that a result holds \emph{for almost every IET with permutation} $\pi$ if it holds for almost every choice of the lengths $|I_i|=\beta_{i+1}-\beta_i$ of the exchanged intervals (with respect to the restriction of the Lebesgue measure on $\mathbb{R}^d$ to the simplex $\Delta_{d-1} = \{(\lambda_1,\dots  \lambda_d), \lambda_i\geq 0, \sum_{i=0}^{d-1} \lambda_i=1\}$).

\smallskip
The  \emph{special flow} over $T: I\to I$ under a positive, integrable roof function $f$ (see also \S\ref{sec:sfdef}) is  the vertical, unit speed flow on the region $X_f$  below the graph of $f$, given by $X_f := \{ (x,y) \in I\times\mathbb{R} :   0\leq y < f(x) \}$, with the identification of each point on the graph, of the form $(x,f(x))$, where $x \in I$, with the base point $ (T(x),0)$, as shown in Figure~\ref{symlog} (see \S\ref{sec:sfdef} for formal definitions).



We consider special flows under a \emph{roof function} chosen in a class of (positive) functions which have \emph{logarithmic singularities} at the discontinuities $\beta_i$. This is the type of singularities that arise in the special flow representation of locally Hamiltonian flows with simple saddles, see \S\ref{sec:reduction}. More precisely, the class of functions, denoted by $\SymLog{\sqcup_{i=0}^{d-1}I_i}$ (to refer to \emph{Symmetric Logarithmic} singularities), consists  of positive real valued functions, defined on $\bigcup_{i=0}^d (\beta_i,\beta_{i+1})$ and such that the restriction $f\vert (\beta_i,\beta_{i+1})$ of $f$ to each $(\beta_i,\beta_{i+1})$ is of the form
$$
f\vert (\beta_i,\beta_{i+1}) = \left| C_i \log (x-\beta_i)\right| + \left| C_i \log (\beta_{i+1}-x)\right| + g_i(x)
$$
where $C_i \geq 0$ is a non-negative constant, $g_i$ is a function of bounded variation on $[\beta_i,\beta_{i+1}]$   and not all $C_i$ are simultaneously zero (see also the definitions in \S\ref{sec:roofs}). Thus, if $C_i\neq 0$, $f$ explodes logarithmically at each endpoint of $I_i$ and the \emph{singularities} are \emph{symmetric} (not necessarily $f\vert (\beta_i,\beta_{i+1})$ because of the presence of $g_i$). An example of a roof function is shown in Figure~\ref{symlog}.

 \begin{figure}[h!]
\includegraphics[width=0.6\textwidth]{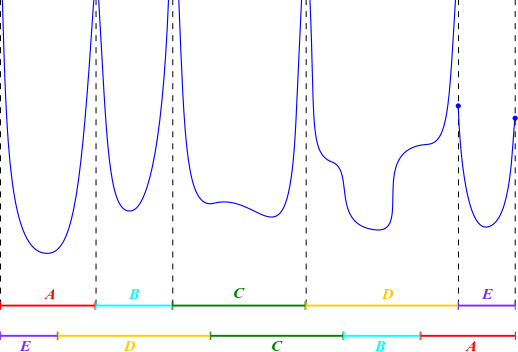}
 \caption{ A special flow over a symmetric $5$-IET with endpoints $\beta_0, \dots, \beta_5$ under a roof {$f \in \SymLog{\sqcup_{i=0}^4 I_i}$}. In this example $C_4=0$. \label{symlog}}
\end{figure}

The main result in the setting of special flows is the following.

\begin{thm}\label{thm:ss_sf}
Let $\pi$ be a symmetric permutation. For almost every IET $T$ with permutation $\pi$ and endpoints $\beta_i, 0\leq i \leq d$, for any $f\in \SymLog{\sqcup_{i=0}^{d-1} I_i}$, the special flow $(T_t^f)$ over $T$ under $f$ has purely singular spectrum.
\end{thm}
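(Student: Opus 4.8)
\medskip
\noindent
The plan is to deduce Theorem~\ref{thm:ss_sf} from the spectral singularity criterion of Section~\ref{sec:singcrit} (Theorem~\ref{thm:singcrit}). Granting that criterion, what has to be shown is: for a.e.\ length vector there exist $q_n\to\infty$, a base interval $J_n$ along which $(q_n)$ is an (approximate) rigidity time for $T$, and constants $c_n$ such that the Birkhoff sums $S_{q_n}f:=\sum_{j=0}^{q_n-1}f\circ T^j$ decompose as $S_{q_n}f=c_n+R_n$ with $(R_n)$ tight and with \emph{uniformly} exponentially small tails, $\sup_n\bigl|\{x\in I:|R_n(x)|>M\}\bigr|\le Ce^{-\kappa M}$ for all $M>0$, all of this holding simultaneously for every $f\in\SymLog{\sqcup_{i=0}^{d-1}I_i}$.

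The sequence $(q_n)$ comes from cylinders. For a.e.\ length vector, $T$ is realised as the Poincar{\'e} map of a translation flow on a translation surface lying in the hyperelliptic connected component of the relevant stratum (this is where the symmetry of $\pi$ is used), and the single-cylinder approximation theorem of \S\ref{sec:main_cyl}, applied to a.e.\ such surface and transported to length data, produces a sequence of cylinders $\mathcal{C}_n$ of the flow with $\mathrm{Area}(\mathcal{C}_n)\to1$. Each $\mathcal{C}_n$ meets the transversal $I$ in an interval $J_n$, with $|J_n|\asymp1/q_n$, whose first-return orbit $J_n,TJ_n,\dots,T^{q_n-1}J_n$ is a Rokhlin tower of pairwise disjoint intervals with $\bigl|\bigsqcup_{j<q_n}T^jJ_n\bigr|=\mathrm{Area}(\mathcal{C}_n)\to1$ and $T^{q_n}|_{J_n}=\mathrm{id}$; this is the recurrence input of Theorem~\ref{thm:singcrit}. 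Since each level $T^jJ_n$ is contained in one continuity interval $I_{i(j)}$ and $T|_{J_n}$ is a translation, one has $S_{q_n}f(x_0+t)=\sum_{j<q_n}f\bigl((x_0+d_j)+t\bigr)=:\Phi_n(t)$ for $x_0=\min J_n$, $t\in[0,|J_n|)$ and $x_0+d_j=\min T^jJ_n$, so the task reduces to controlling the one-variable function $\Phi_n$ on $[0,|J_n|)$, together with the (small-measure) exceptional set $I\setminus\bigsqcup_{j<q_n}T^jJ_n$.

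Write $f=\ell+g$ with $\ell|_{I_i}=C_i|\log(x-\beta_i)|+C_i|\log(\beta_{i+1}-x)|$ and $g=\sum_i g_i$ of bounded variation. As the levels $T^jJ_n$ are disjoint and each lies in one $I_i$, the term $\sum_j g((x_0+d_j)+t)$ has total variation in $t$ bounded by $\sum_i\mathrm{Var}(g_i)$, uniformly in $n$; so $g$ contributes only $O(\mathrm{Var}(g))$ to the fluctuation and a constant to $c_n$. The main term $\sum_j\ell((x_0+d_j)+t)$ is handled by the symmetry of the singularities: inside a fixed $I_i$ the levels pair up (the $m$-th from the left with the $m$-th from the right), and because the coefficient $C_i$ is the \emph{same} at the two endpoints of $I_i$, the $t$-derivatives of the logarithmic contributions of a pair cancel to leading order, leaving a residue $O(C_i)$; summing over $i$, the fluctuation of the logarithmic part is $O(\sum_i C_i)$ on the bulk of $[0,|J_n|)$ and grows only logarithmically as $t\to0^+$ or $t\to|J_n|^-$, i.e.\ only for $x$ near $\partial J_n$ (equivalently when the $q_n$-orbit of $x$ approaches a discontinuity $\beta_i$ anomalously closely). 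This is the quantitative core behind the non-mixing of symmetric-log special flows (cf.~\cite{Sch:abs,Fr-Le0,Fr-Le2}), and it is exactly here that the hyperelliptic permutation must be used, so that the tower coming from the cylinder is compatible with the pairing and the cancellation survives the IET combinatorics. Taking, e.g., $c_n=\Phi_n(|J_n|/2)$ gives $S_{q_n}f=c_n+R_n$ with $R_n$ uniformly bounded away from $\partial J_n$ and controlled by $O\bigl(\log^+(1/\mathrm{dist}(x,\partial J_n))\bigr)$ near it.

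The remaining, and hardest, step is the uniform exponential tail bound. By the previous step $\{x:|R_n(x)|>M\}$ is, up to the exceptional set of measure $1-\mathrm{Area}(\mathcal{C}_n)\to0$, contained in an $O(e^{-\kappa M})$-neighbourhood of $\partial J_n$ with $\kappa\asymp(\max_i C_i)^{-1}$, hence has measure $\lesssim e^{-\kappa M}$, uniformly in $n$. Turning this into a proof requires careful bookkeeping of the gaps between consecutive levels and of the slack at the two ends of each $I_i$, so that the level-pairing and the cancellation hold with explicit error bounds that do \emph{not} degrade as $n\to\infty$; this uniformity is the technical heart of the argument, and it is what forces the use of the quantitative single-cylinder approximation of \S\ref{sec:main_cyl} rather than a soft rigidity statement. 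Since $(q_n)$ depends only on $T$ and the bounds depend on $f$ only through $\mathrm{Var}(g)$ and the $C_i$, the whole construction works for every $f\in\SymLog{\sqcup_{i=0}^{d-1}I_i}$ at once; feeding $(q_n)$, $(c_n)$ and the tail estimate into Theorem~\ref{thm:singcrit} yields purely singular spectrum, which is Theorem~\ref{thm:ss_sf}.
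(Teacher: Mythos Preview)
Your overall architecture is right---rigidity towers from single cylinders, cancellation from the hyperelliptic symmetry, exponential tails, then Theorem~\ref{thm:singcrit}---but the implementation of the cancellation step is both different from the paper's and, as written, contains a gap.

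You claim that ``inside a fixed $I_i$ the levels pair up (the $m$-th from the left with the $m$-th from the right)'' and that the symmetric coefficients then force the $t$-derivatives to cancel to $O(C_i)$. This pairing-within-$I_i$ picture is not what the hyperelliptic involution gives: the involution $S(x)=|I|-x$ does \emph{not} preserve the individual $I_i$, and the relation $T\circ S=S\circ T^{-1}$ is global. Making a level-by-level pairing argument work for IETs (as opposed to rotations, where Denjoy--Koksma does the job) is exactly the difficulty that \cite{Sch:abs,Ul:abs} had to overcome by heavy combinatorial or renormalization arguments, and your sketch does not indicate how to do this with the uniformity you need. Also, $T^{q_n}|_{J_n}$ is \emph{not} the identity; the cylinder direction differs from the vertical, so one only has $|T^{h_n}x-x|\le\epsilon_n$ with $\epsilon_n=1/(q_n\log q_n)$, and this quantitative rate (not just $o(1)$) is actually used when bounding incomplete Birkhoff sums in Step~2 of the tail estimate.

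The paper sidesteps the pairing analysis entirely. Its mechanism is: (i) a universal second-derivative bound $|S_{h_n}(f'')(x)|\le C\bigl((x-a_n)^{-2}+(b_n-x)^{-2}\bigr)$ on the tower base, which needs no symmetry at all (Lemma~\ref{lem:ad1}); (ii) the functional equation $f'\circ T^{-1}\circ S=-f'$, which for the Weierstrass point $x_0=|I|/2$ gives $S_{h_n}(f')(T^{-h_n}x_0)=-S_{h_n}(f')(x_0)$, hence by the intermediate value theorem a zero $x_n$ of $S_{h_n}(f')$ between $x_0$ and $T^{-h_n}x_0$ (Corollary~\ref{cor:x0}); (iii) an elementary calculus lemma (Lemma~\ref{lem:ad0}) turning (i) and (ii) into the exponential tail bound, with the constant depending only on how far $x_n$ sits from $\partial J_n$. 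The one geometric point you are missing is that one must \emph{arrange} $x_0$ to lie in the base $J_n$: this uses that $\iota(C_n)=C_n$ once $a(C_n)>1/2$, so the core curve of $C_n$ carries a Weierstrass point, and one chooses the section (or the connected component of $C_n\cap I$) accordingly. Your centering constant $c_n=\Phi_n(|J_n|/2)$ coincides with the paper's $S_{h_n}(f)(y_n)$, but the reason it works is the location of the zero of the derivative, not a direct pairing of levels.
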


The result in the context of  special flows is hence more general (since it holds for IETs of any number $d\geq 2$ of exchanged intervals in the base), but unfortunately (similarly to the case of Scheglov's  result \cite{Sch:abs} on absence of mixing) this does not yield any general result for smooth locally Hamiltonian flows  on surfaces of genus higher than two  (see Remark~\ref{rk:realizable}). The role played by the symmetry of the IET, together with  the symmetry in the roof, is explained in \S\ref{sec:strategy}. We remark that the special case of Theorem~\ref{thm:ss_sf} for $d=2$ recovers the main result from \cite{Fr-Le0}.

\subsection{Translation surfaces well approximated by single cylinders}\label{sec:main_cyl}

We now state some results on cylinders in translation surfaces, which will be used as an ingredient in our proof of singularity of the spectrum but holds in more generality for any translation surface. As a reference on background material on translation surfaces, we refer the reader to one of the surveys \cite{FM,Vi,Yo}.

\smallskip
Let $(M, \omega)$ denote a (compact) translation surface, namely a Riemann surface $M$ with an Abelian differential $\omega$ which defines a flat metric with conical singularities on $M$, which correspond to zeros of $\omega$. Recall that the notion of direction is well defined globally on a translation surface, thus directions can be identified with $S^1$. Denote by $\mathcal{C}yl_\omega$ the set of all cylinders in the translation surface $(M,\omega)$, i.e.~$C\in \mathcal{C}yl_\omega$  is a maximal open annulus filled by
homotopic simple closed (flat) geodesics.
Any cylinder $C$ is isometric to an annulus $J\times \R/c\Z$, where $J\subset \R$ is an (open) interval and $c>0$. The \emph{core curve} of $C$ is the closed geodesic represented by $\{x\}\times \R/c\Z$, where $x$ is the mid point of the interval $J$.

For any cylinder $C\in \mathcal{C}yl_\omega$, denote by:
\begin{itemize}
\item $\gamma(C)$ the core curve of $C$;
\item $\theta_C\in S^1$ the \emph{direction} of $C$ (i.e. the direction of the core curve $\gamma(C)$);
\item $a(C)$ the area of $C$ with respect to the flat area-form induced by $\omega$;
\item $\ell(C)>0$ the length of $\gamma(C)$ in the flat metric.
\end{itemize}

Assume that $a(M)=1$. For every $0<\epsilon<1$ let ${\mathcal{C}yl}^\epsilon_\omega$  be the subset of cylinders $C\in{\mathcal{C}yl}_\omega$ with $a(C)\geq 1-\epsilon$. We are interested in showing that on a typical translation surface, a full measure set of directions can be approximated (with a certain speed) by the directions of a sequence of cylinders ${\mathcal{C}yl}^\epsilon_\omega$, i.e.\ by single cylinders of area close to one.

To state the result, let $\mathcal{C}$ denote a connected component  of (a stratum of) the moduli space  of compact area one translation surfaces.
In particular, all translation surfaces in $\mathcal{C}$ have the same number and type of conical singularities, or equivalently zeros of the Abelian differential. Let $m_\mathcal{C}$ denote  the sum of the multiplicities of singular points (for example $m_\mathcal{C}=2$ for translation surfaces with genus two and two simple saddles, more in general $m_\mathcal{C}=\sum_{i=1}^n{\kappa_i}$ for connected components of the stratum $\mathcal{H}(\kappa_1, \dots, \kappa_n)$).

Recall that each $\mathcal{C}$ is endowed by a natural volume probability measure $\nu_\mathcal{C}$ (the Masur-Veech measure \cite{Ma:int,Ve:gau}).
Let $c_1(\mathcal{C})$ be the corresponding Siegel-Veech constant (we refer e.g.~to \cite{Es-Ma} for the notion of Siegel-Veech constant, which enters in counting problems on translations surfaces).

Let $\lambda$ denote the Lebesgue (probability) measure on the unit circle $S^1$ in the complex plane, which we freely identify with $[0,2\pi)$. The main result of this section is the following.
\begin{prop}[Directions well approximated by large cylinders]\label{prop:cyl}
For $\nu_\mathcal{C}$-almost every translation surface $(M,\omega)\in\mathcal{C}$ and any $\epsilon>0$ there exists  a sequence of cylinders $(C_i)_{i\geq 1}$ on $(M,\omega)$ so that  $\ell({C_i})\to+\infty$ as $i\to+\infty$ and
for every $i\geq 1$ we have
\[a(C_i)\geq 1-\epsilon\quad\text{and}\quad\|\theta_{C_i}-\tfrac \pi 2\|<\frac 1 {\ell({C_i})^2 \log(\ell({C_i}))}.\]
\end{prop}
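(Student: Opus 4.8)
The plan is to deduce the statement from an equidistribution/Borel–Cantelli argument in the space of translation surfaces, using the $SL(2,\mathbb R)$-action and the Siegel–Veech transform to count cylinders of large area and nearly vertical direction. Fix a connected component $\mathcal C$ with its Masur–Veech measure $\nu_{\mathcal C}$ and the Siegel–Veech constant $c_1(\mathcal C)$ governing the asymptotics of the number of cylinders of area $\geq 1-\epsilon$ and length $\leq R$, which grows like $c_1(\mathcal C)\, R^2$ on a generic surface. First I would set up, for each scale $n$, the event that the surface $(M,\omega)$ has a cylinder $C$ with $a(C)\geq 1-\epsilon$, with $\ell(C)$ in a dyadic (or geometric) window $[R_n, 2R_n]$, and with $\|\theta_C - \tfrac\pi2\| < 1/(\ell(C)^2\log\ell(C))$. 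The point is that the measure of this event should be bounded below by a constant times the measure of a thin ``box'' around the vertical: cylinders of length $\sim R_n$ at angle $\sim \delta$ from the vertical correspond, after applying the geodesic flow $g_t$ with $e^t\sim R_n$ together with a small rotation, to short cylinders (core curve of bounded length) on the pushed surface, and the proportion of angles $\delta < 1/(R_n^2\log R_n)$ relative to the natural angular measure at that scale is comparable to $1/(R_n^2\log R_n)\cdot R_n^2 = 1/\log R_n$ times the density of surfaces with a short large-area cylinder.

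The key steps, in order, would be: (1) Use the Siegel–Veech formula to express the expected number of cylinders of large area with parameters in the desired box as an integral of the Siegel–Veech transform against $\nu_{\mathcal C}$; this integral is positive and, by the precise $c_1(\mathcal C)>0$ asymptotics, comparable to $1/\log R_n$ when the angular window is $1/(R_n^2\log R_n)$ and the length window is a fixed geometric scale. (2) Upgrade this ``expectation is positive'' statement to a genuine lower bound on $\nu_{\mathcal C}$ of the event ``at least one such cylinder exists'' — for this one either uses a second-moment (Paley–Zygmund) argument, controlling the variance of the Siegel–Veech transform (finiteness of the second moment of the Siegel–Veech transform in each stratum is known, e.g.\ by Eskin–Masur / Athreya–Cheung–Masur type bounds), or one invokes the mixing of the $SL(2,\mathbb R)$-action: pushing by $g_t r_\theta$ and using that the set of surfaces possessing a core curve of bounded length and area $\geq 1-\epsilon$ has positive $\nu_{\mathcal C}$-measure, equidistribution of the pushed measures forces a definite fraction of parameters $(t,\theta)$ in each box to land in this set. (3) Run Borel–Cantelli: since $\sum_n 1/\log R_n = +\infty$ along a geometric sequence $R_n$, and the events at very different scales are ``quasi-independent'' (again via mixing, decorrelating events at scales $R_n$ and $R_m$ with $|n-m|$ large), the divergent Borel–Cantelli lemma (Borel–Cantelli–Lévy / Kochen–Stone) yields that for $\nu_{\mathcal C}$-a.e.\ surface infinitely many of the events occur, i.e.\ there is a sequence of cylinders $C_i$ with $\ell(C_i)\to\infty$, $a(C_i)\geq 1-\epsilon$ and $\|\theta_{C_i}-\tfrac\pi2\| < 1/(\ell(C_i)^2\log\ell(C_i))$. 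Finally, to pass from ``for $\nu_{\mathcal C}$-a.e.\ surface and each fixed $\epsilon$'' to ``for $\nu_{\mathcal C}$-a.e.\ surface and every $\epsilon>0$'', intersect over a countable sequence $\epsilon_k\to 0$.

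The main obstacle I expect is step (2)–(3): turning a first-moment (Siegel–Veech) estimate into an almost-sure statement with the right logarithmic gain, and in particular establishing enough independence/decorrelation between the scale-$R_n$ events to apply a divergent Borel–Cantelli. A purely first-moment input only gives that the events have summable-or-not measure; one genuinely needs a variance bound or a quantitative mixing/effective-equidistribution input for $g_t r_\theta$ on $\mathcal C$ (of the type now available via the work on exponential mixing of the Teichmüller flow and Ratner-type quantitative statements in each stratum) to control the overlaps $\nu_{\mathcal C}(A_n\cap A_m)$ against $\nu_{\mathcal C}(A_n)\nu_{\mathcal C}(A_m)$. A secondary technical point is bookkeeping the geometry of the correspondence ``nearly vertical long cylinder on $(M,\omega)$ $\leftrightarrow$ short core curve with area $\geq 1-\epsilon$ on $g_t r_\theta(M,\omega)$'', making sure the length $\ell(C_i)$, not merely $e^t$, satisfies the stated inequality; this is a routine comparison once one is careful that applying a small rotation $r_\theta$ with $\theta$ of size $\delta\sim R_n^{-2}/\log R_n$ changes lengths only by a factor $1+o(1)$.
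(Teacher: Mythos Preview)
Your outline is a plausible route, but it is genuinely different from the paper's argument, and the step you yourself flag as the main obstacle (quasi-independence of the events $A_n$ at different scales, needed for a divergent Borel--Cantelli in $\mathcal{C}$) is precisely what the paper avoids.

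In the paper, Proposition~\ref{prop:cyl} is an immediate corollary of the Khintchine-type Theorem~\ref{thm:Kcyl}: set $\psi(t)=1/(t^2\log t)$, check $\int t\psi(t)\,dt=\infty$, and intersect over $\epsilon_n=1/n$. The substance is in Theorem~\ref{thm:Kcyl}, and there the argument is run not in the moduli space but on the circle $S^1$ of directions for a \emph{fixed} generic surface (then transferred to the vertical direction by Fubini and rotational invariance of $\nu_{\mathcal C}$). The two inputs are: (i) Vorobets' counting asymptotic, which for a.e.\ fixed $(M,\omega)$ gives $\#\{C\in\mathcal{C}yl^\epsilon_\omega:\ell(C)\le T,\ \theta_C\in J\}\sim cT^2\lambda(J)$; and (ii) an elementary \emph{separation} lemma: any two cylinders of area $>1/2$ and length $\le T$ must intersect, hence their directions differ by at least $(1-\epsilon)/T^2$. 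Item (ii) gives, for free, an upper bound matching the lower bound in (i), and together they feed an iterative covering argument on $S^1$ (Lemma~\ref{lem:add more}): at each geometric scale $L\mapsto\sigma L$, either the angles already covered fill a $c/9$-fraction of $J$, or one gains at least $\min\{(\sigma L)^2\psi(\sigma L),1\}\cdot\tfrac{c}{4}\lambda(J)$ of new measure; divergence of $\sum_k \sigma^{2k}\psi(\sigma^k)$ forces the first alternative eventually.

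The contrast with your plan: you propose to control $\nu_{\mathcal C}(A_n\cap A_m)$ via second moments of the Siegel--Veech transform or quantitative mixing of $g_t r_\theta$, and then apply Kochen--Stone in $\mathcal{C}$. This is heavier machinery and you would still need to make the decorrelation precise. The paper replaces all of that by the separation fact (two large-area cylinders must overlap, hence their angles are $\gtrsim 1/T^2$ apart), which converts the problem into a deterministic $1$-dimensional covering estimate with no probabilistic independence needed. Your approach could in principle be pushed through, but the paper's route is both simpler and bypasses exactly the obstacle you identified.
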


The sequence of cylinders $(C_i)_{i\geq 1}$  gives what we will call a \emph{good approximation} of the vertical direction by directions of single cylinders (when $\epsilon $ is small).  The approximation rate $\ell({C_i})^{-2} \log(\ell({C_i}))^{-1}$ is chosen to allow us to prove singularity of the spectrum in the genus two case.

\smallskip

Proposition~\ref{prop:cyl} can be easily deduced (see Section~\ref{sec:cylinder})  from the following  result on translation surfaces, which mimics, in the context of translation surfaces, the statement of Khintchine Theorem in Diophantine approximation.

\begin{thm}[Khintchine Theorem for cylinders on translation surfaces; c.f.\
{\cite[Theorem 1]{Ch}} and {\cite[Theorem 6.1 (2)]{Ma-Tr-We}}]\label{thm:Kcyl}  Let $\psi:\mathbb{R}^+ \to \mathbb{R}^+$ be non-increasing so that $t\psi(t)\leq 1$ for $t$ large enough and $\int_1^{+\infty} t \psi(t)=\infty$. Then for a.e.\ $(M,\omega)\in \mathcal{C}$ and every $0<\epsilon<1/2$ the set
\begin{equation}\label{eq:set}
W^\psi_\omega=\bigcap_{m\geq 1}\bigcup_{\{C \in {\mathcal{C}yl}^\epsilon_\omega  :\ \ \ell(C)\geq m\}} \big\{\phi\in S^1:  \|\theta_C-\phi\|<\psi(\ell({C}))\big\}
\end{equation}
has full Lebesgue measure. Moreover, for a.e.\ $(M,\omega)\in \mathcal{C}$ there exists a sequence $(C_i)_{i\geq 1}$ in ${\mathcal{C}yl}_\omega^\epsilon$
such that $\ell({C_i})\to+\infty$ as $i\to+\infty$ and $\|\theta_{C_i}-\frac{\pi}{2}\|<\psi(\ell({C_i}))$ for all $i\geq 1$.
\end{thm}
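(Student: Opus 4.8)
The plan is to prove Theorem~\ref{thm:Kcyl} by combining an equidistribution/Siegel--Veech counting input on translation surfaces with a Borel--Cantelli type argument, in the spirit of the classical proof of Khintchine's theorem. First I would recall the Siegel--Veech transform: for a test function on the plane, averaging over the cylinder holonomy vectors $\mathrm{hol}(C)\in\mathbb{R}^2$ (scaled appropriately so that $|\mathrm{hol}(C)|=\ell(C)$ and the direction of $\mathrm{hol}(C)$ is $\theta_C$) gives, after integrating over $\mathcal{C}$ with respect to $\nu_\mathcal{C}$, a multiple of the Lebesgue integral of the test function, with constant the Siegel--Veech constant $c_1(\mathcal{C})$ (restricted to the sub-collection $\mathcal{C}yl^\epsilon_\omega$, which has its own Siegel--Veech constant $c_1^\epsilon(\mathcal{C})$, finite and positive for $\epsilon<1/2$). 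The key point is that the expected number of cylinders in $\mathcal{C}yl^\epsilon_\omega$ with holonomy in a fixed region is governed by the area of that region.

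The heart of the argument is then a \emph{quantitative, almost-everywhere} statement rather than just an expectation: I would fix the target direction $\pi/2$ (or, for the first part, view all of $S^1$) and consider the annular sectors $A_m = \{v\in\mathbb{R}^2 : m\leq |v| < m', \ \|\arg(v)-\tfrac\pi2\|<\psi(|v|)\}$ for a suitable increasing sequence of radii. The Siegel--Veech count shows that the expected number of cylinders of $\mathcal{C}yl^\epsilon_\omega$ with holonomy in $A_m$ is comparable to $\int t\psi(t)\,dt$ over the corresponding radius range, which diverges by hypothesis. To upgrade from expectation to an almost-sure infinitude, I would establish a second-moment (quasi-independence) estimate: the variance of these counts is controlled, which follows from a bound on the Siegel--Veech transform of products, i.e.\ on the expected number of \emph{pairs} of short cylinders with prescribed holonomies. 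This is where one invokes (or reproves, following \cite{Ch} and \cite{Ma-Tr-We}) the relevant effective estimates — either an $L^2$ norm bound on the Siegel--Veech transform, or a direct pair-counting bound. Given such a second-moment bound, a Paley--Zygmund / divergence Borel--Cantelli argument (as in the Erdős--Gál / Schmidt method for Khintchine's theorem) yields that for a.e.\ $(M,\omega)$ infinitely many cylinders $C\in\mathcal{C}yl^\epsilon_\omega$ have $\|\theta_C-\tfrac\pi2\|<\psi(\ell(C))$, which is precisely the ``moreover'' part of the theorem (and forces $\ell(C_i)\to\infty$ since there are only finitely many cylinders of bounded length). For the first assertion, that $W^\psi_\omega$ has full measure, I would run the same argument with the target point replaced by a generic $\phi\in S^1$; a Fubini argument over $(\omega,\phi)$ then shows that for a.e.\ $\omega$, a.e.\ $\phi$ lies in $W^\psi_\omega$, and the invariance of $W^\psi_\omega$ under the geodesic flow (which rescales lengths but does not change the set of directions approximated infinitely often, up to adjusting $m$) combined with ergodicity of the $SL(2,\mathbb{R})$ action promotes ``a.e.\ $\phi$'' to ``full measure, for a.e.\ $\omega$''. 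Alternatively one can argue directly that $W^\psi_\omega$, being a $\limsup$ set whose complement one bounds, has either zero or full measure and then use the Fubini input to exclude zero.

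The main obstacle I expect is the second-moment / quasi-independence estimate: controlling the expected number of pairs of cylinders of area $\geq 1-\epsilon$ with holonomies simultaneously in two prescribed annular sectors. Unlike the classical number-theoretic setting where independence of divisibility conditions is elementary, here one needs either an effective equidistribution result for the $SL(2,\mathbb{R})$ action (with a spectral gap giving a power saving) applied to products of Siegel--Veech transforms, or a geometric pair-counting lemma specific to large-area cylinders — and one must handle the ``diagonal'' contribution (the two cylinders coinciding or being parallel) separately, showing it is negligible. The cited results \cite[Theorem 1]{Ch} and \cite[Theorem 6.1(2)]{Ma-Tr-We} presumably supply exactly this kind of input, so a clean route is to quote them and verify that our $\psi$ satisfies their hypotheses (non-increasing, $t\psi(t)\leq 1$ eventually, divergent integral), and that the restriction to $a(C)\geq 1-\epsilon$ is compatible — the latter because the sub-Siegel--Veech constant $c_1^\epsilon(\mathcal{C})$ is still finite and positive, so the large-area cylinders are themselves ``Khintchine-generic''. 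Once the full-measure statement for $W^\psi_\omega$ is in hand, the ``moreover'' sequence converging to $\pi/2$ follows by specializing $\phi=\pi/2$ and extracting from the $\limsup$ a sequence with $\ell(C_i)\to\infty$; Proposition~\ref{prop:cyl} is then the case $\psi(t)=t^{-2}(\log t)^{-1}$, for which $\int_1^\infty t\psi(t)\,dt=\int_1^\infty (t\log t)^{-1}\,dt=\infty$.
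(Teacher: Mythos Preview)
Your route is genuinely different from the paper's, and it has two concrete gaps.

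\textbf{What the paper actually does.} There is no second-moment or variance estimate anywhere. The paper uses two ingredients you do not mention:
(i) Vorobets' \emph{pointwise} equidistribution, which for a.e.\ $\omega$ gives the asymptotic count of cylinders in ${\mathcal C}yl^\epsilon_\omega$ with $\ell(C)\leq T$ and $\theta_C\in J$, not just its expectation; and
(ii) a geometric \emph{separation lemma}: any two cylinders of area $>1/2$ must share a point, and then elementary geometry forces $\|\theta_C-\theta_{C'}\|\geq\max\{a(C),a(C')\}/(\ell(C)\ell(C'))$. So the directions $\{\theta_C:\ell(C)\leq T\}$ are $\sim T^{-2}$-separated, giving an \emph{upper} count matching the Vorobets \emph{lower} count. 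With two-sided pointwise counts in hand, the paper runs an iterative covering argument: if in an interval $J$ the balls $B(\theta_C,\psi(\ell(C)))$ with $T\leq \ell(C)\leq L$ still cover less than $\tfrac{c}{9}\lambda(J)$, then passing to the next dyadic-type shell $[L,\sigma L]$ adds a definite amount of new measure; since $\sum\sigma^{2k}\psi(\sigma^k)=\infty$, this cannot persist. Hence $\lambda(W^\psi_{\omega,m}\cap J)\geq\tfrac{c}{9}\lambda(J)$ for every $J$ and large $m$, and the Lebesgue density theorem gives $\lambda(W^\psi_\omega)=1$. The ``moreover'' is then obtained \emph{from} the first part by a Fubini argument over rotations $r_\theta$ of $\omega$.

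\textbf{Your gaps.} First, the variance/quasi-independence bound you need for pairs of large-area cylinders is neither supplied nor quotable: \cite{Ch} and \cite{Ma-Tr-We} follow the separation/density approach above, not an $L^2$ Siegel--Veech method, so invoking them for a second-moment input is circular. Second, your mechanism for upgrading ``for a.e.\ $\omega$, a.e.\ $\phi\in W^\psi_\omega$'' to ``for a.e.\ $\omega$, $\lambda(W^\psi_\omega)=1$'' is not correct as written: $W^\psi_\omega$ is a subset of $S^1$ attached to a fixed surface, so ``invariance of $W^\psi_\omega$ under the geodesic flow'' is not a meaningful statement (the geodesic flow changes $\omega$, and the map $\omega\mapsto\lambda(W^\psi_\omega)$ is not obviously invariant); nor is there a ready zero--one law for $\lambda(W^\psi_\omega)$ here. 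The paper sidesteps this entirely by proving the uniform local lower bound in \emph{every} interval $J$, which immediately gives full measure via Lebesgue density. If you want to salvage your approach, the cleanest fix is to abandon the variance route and instead exploit the separation of large-area cylinder directions --- that is the real reason second moments are unnecessary in this problem.
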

This result's proof is independent of the rest of the paper and follows from the methods of \cite{Ch} and \cite{Ma-Tr-We}. It is proved in Section~\ref{sec:cylinder}.


\subsection{Strategy of the proof of the main result}\label{sec:strategy}
Let us conclude the introduction explaining the main ideas in the proof. To study ergodic and spectral properties of locally Hamiltonian flows, it is standard to exploit their representation as special flows over an IET (or a rotation when $g=1$). The growth of Birkhoff sums $S_n(f)= \sum_{k=0}^{n-1} f\circ T^k$ of the roof function $f$ and its derivatives play a crucial role in the proof of properties such as mixing, weak mixing, multiple mixing, shearing properties and disjointness phenomena among others. Spectral behavior is no exception, but requires  a \emph{much} more delicate understanding of \emph{weak limits} of Birkhoff sums.

The \emph{criterion} we use for proving \emph{singularity of the spectrum} of special flows (stated in \S\ref{sec:sc}) is devised to deal with flows which display \emph{absence of mixing}.  An important early criterion for absence of mixing appears in Katok's work \cite{Ka:int}, which shows 
 that special flows over IETs under roof functions of bounded variation are never mixing, and by Kochergin's, which shows the absence of mixing for special flows over rotations under a roof with a symmetric logarithmic singularity (see \cite{Ko:abs, Ko:07}). Both criteria require as input  \emph{tightness} of Birkhoff sums along some subsequences of \emph{rigidity} (or \emph{partial rigidity}) \emph{times}, i.e.\ one has to show that there exists a sequence $(q_n)$ of times such that $T^{q_n}$ converges to identity on subsets $E_n $ of measure tending to one (if there is rigidity, or measure bounded below in the case of partial rigidity) and at the same time, for some centralizing sequence $(a_n)$ and uniform constant $C$,
 $Leb\{ x\in E_n | \ |S_{q_n}(f)(x) - a_n|<C\}/Leb(E_n) \to 1 $.
In the case of rotations and functions of bounded variation, this follows easily from Denjoy-Kosma inequality, while for functions with symmetric logarithmic singularities one has to exploit a \emph{cancellation} phenomenon among contributions coming from the symmetric singularities.

These type of criteria were pushed in two different directions in \cite{Fr-Le0} and \cite{Sch:abs, Ul:abs}. Fr\k{a}czek and Lema\'nczyk in \cite{Fr-Le0}, considering the same example as Kochergin (special flows with one symmetric logarithmic singularity over rotations), showed that if, in addition to \emph{tightness}, one can also control the \emph{tails} of the distribution of the centralized Birkhoff sums $S_{q_n}(f)(x) - a_n$, one can prove much stronger results (using joinings and Markov operators) and deduce in particular spectral disjointness from mixing flows, which implies that the spectrum is purely singular.  In \cite{Sch:abs, Ul:abs} IETs were considered on the base (which is required when treating surfaces of genus $g\geq 2$). In this case, cancellations are much more difficult to prove because of the absence of the Denjoy-Koksma inequality. To prove absence of mixing, though, it is sufficient to prove cancellations on carefully constructed partial rigidity times. The usual tool to study IETs (which is \emph{not} used in this paper) is Rauzy-Veech induction, a renormalization algorithm for IETs. In \cite{Ul:abs} Rauzy-Veech induction (and the log integrability of the associated cocycle) are heavily used to obtain cancellations at carefully chosen renormalization times.
 On the other hand, in  Scheglov's work \cite{Sch:abs}, the cancellations were proved through a careful combinatorial analysis of the substitutions arising from the action of Rauzy-Veech induction on symmetric permutations.
Ideally one would like to  combine these two approaches in order to prove spectral results (as  in \cite{Fr-Le0})  for IETs (as in \cite{Sch:abs, Ul:abs}). The key difficulty is that cancellations are hard to achieve for IETs on sets of large measure (the cancellations in \cite{Ul:abs} for example are crucially based on \emph{balanced} Rauzy-Veech induction time, which are opposite to rigidity times).

In this paper, for surfaces of genus two or symmetric permutations, we (implicitly) exploit  a  very geometric approach to deduce cancellations, based on  a simple  mechanism which uses in an essential way the hyperelliptic involution: the key idea is that, for any symmetric (of equal backward and forward length) trajectory from a fixed point of the hyperelliptic involution, there are perfect cancellations for Birkhoff sums of the derivative of the roof function (see \S\ref{sec:symmetry}). Cancellations achieved through the hyperelliptic involution have the advantage of being compatible with rigidity. In particular, they can be shown to hold for Birkhoff sums along a rigidity tower of area close to one (i.e.\ a Rokhlin tower for the IET which comes from a cylinder of area close to one on the surface).

One of the advantages of this approach is that we do not make use at all of Rauzy-Veech induction. Theorem~\ref{thm:singsp} also provides an independent proof of Scheglov's work \cite{Sch:abs}, which highlights the role played by the hyperelliptic symmetry in  Scheglov's combinatorial calculations.

In order to prove singularity of the spectrum using this approach (and the criterion stated in Section~\ref{sec:singcrit}, which is a generalization of the criterion in \cite[Corollary 5.2]{Fr-Le1} and \cite[Proposition 11]{Fr-Le0}) though, another ingredient is needed, namely \emph{good rigidity} (see Definition~\ref{def:IETrecurrence}). Cancellations achieved thanks to the hyperelliptic involution only hold for Birkhoff sums along a full rigidity tower. To prove the exponential tails estimates needed to apply the criterion on the whole tower, one has to controldeincomplete sums, that can in general fail to be tight. These  potentially worse  estimates (see Remark \ref{rk:exptails}) are compensated for by assuming that points in the base of the rigidity tower have a quantitatively good form of recurrence (see Definition~\ref{def:IETrecurrence}).  The existence of good rigidity towers for almost every IET is deduced (in \S\ref{sec:final}) from the abundance of translation surfaces well approximated by single cylinders (i.e.~from Proposition~\ref{prop:cyl}).


\subsection{Structure of the paper}
In Section~\ref{sec:locHam_sf} we first recall some background material on locally Hamiltonian flows and their reduction to special flows, with particular attention to the form of the representation in the special case of genus two and two isomorphic saddles (see Lemma~\ref{lemma:reduction} and Corollary~\ref{cor:reduction}).  
Our criterion for singularity for special flows (Theorem~\ref{thm:singcrit}) is stated and proved in \S\ref{sec:sc}, after recalling basic spectral notions in \S\ref{sec:spectral}. Elementary but precise estimates on (Birkhoff sums of) functions with symmetric logarithmic singularities
are proved in  \S\ref{sec:BS}; these, combined with the symmetry and the cancellation arguments are explained in \S\ref{sec:symmetry} (which  follow from the hyperelliptic involution, see Lemmas \ref{lem:symmetries} and \ref{lem:cancellations}), are then used in \S\ref{sec:final}, in combination with the rigidity deduced from single cylinders (given by Proposition~\ref{prop:cyl}) to conclude the proof of the singularity result in genus two (i.e.~Theorem~\ref{thm:singsp}).
Finally, in Section~\ref{sec:cylinder} (which can be read independently), we prove the Khintchine-type result for translation surfaces (Theorem~\ref{thm:Kcyl} and show how it implies Proposition~\ref{prop:cyl} about translation surfaces well approximated by single cylinders.

\section{Locally Hamiltonian flows and reduction to special flows}\label{sec:locHam_sf}
In this section we recall some definitions, basic notions and background material on locally Hamiltonian flows (\S\ref{sec:locHam} and \S\ref{sec:measureclass}) and on  special flows \S\ref{sec:sfdef}. We also quickly summarize some results in the literature of locally Hamiltonian flows \S\ref{sec:history}.

\subsection{Smooth area-preserving flows as locally Hamiltonian flows}\label{sec:locHam}
In this section we define locally Hamiltonian flows and show that they are equivalent to smooth area-preserving flows.

\smallskip
Assume that $M$ is a $2$-dimensional closed connected orientable smooth surface of genus $g\geq 1$.
  Let $X:M\to TM$ be a smooth tangent vector field with finitely many fixed points
and such that the corresponding flow $(\varphi_t)_{t\in\R}$ preserves a smooth volume form $\omega$ (which is locally given by $V(x,y) d x \wedge d y$ for some smooth positive real valued function  $V:U \to \mathbb{R}$ on the coordinate chart). Then, letting $\eta:=\imath_X\omega=\omega( \eta, \,\cdot \,)$, where $\imath_X$ denotes the contraction operator, we have $d\eta=0$. Furthermore,  since $\eta$ is a smooth closed 1-form, for any $p\in M$ and any simply connected neighbourhood $U$ of $p$ there exists a smooth (local Hamiltonian) map (unique up to additive constant)
such that $dH=\eta$ on $U$.

\smallskip

Conversely, let  $(M, \omega)$ be a 2-dimensional symplectic manifold, where $M$ is a  closed connected orientable smooth surface of genus $g\geq 1$
endowed with the standard area form $\omega$ (obtained as pull-back of the area form ${d} x \wedge {d} y$ on $\mathbb{R}^2$). 
Let $\eta $ be a smooth closed real-valued differential $1$-form.   Let $X$ be the vector field determined by $\eta = \imath_X \omega $ and consider the flow $(\varphi_t)_{t\in\mathbb{R}}$ on $M$ associated to $X$. Since $\eta$ is closed, the transformations $\varphi_t$, $t \in \mathbb{R}$, are  area-preserving (i.e.\ preserve the area form $\omega$ and the measure given by integrating it). We will always assume that the form is normalized so that the associated measure gives area $1$ to $M$.

The flow $(\varphi_t)_{t\in\mathbb{R}}$  is known as the \emph{multi-valued Hamiltonian} flow associated to $\eta$. Indeed, the flow $(\varphi_t)_{t\in\mathbb{R}}$ is \emph{locally Hamiltonian}, i.e.\ \emph{locally} one can find coordinates $(x,y)$ on $M$ in which $(\varphi_t)_{t\in\mathbb{R}}$ is given by
 the solution to the  equations $\dot{x}={\partial H}/{\partial y}$, $\dot{y}=-{\partial H}/{\partial x}$ for some smooth  real-valued Hamiltonian function $H$.  A \emph{global}  Hamiltonian $ H$ cannot be in general defined (see \cite{NZ:flo}, \S1.3.4), but one can think of  $(\varphi_t)_{t\in\mathbb{R}}$ as globally given by a \emph{multi-valued} Hamiltonian function.

When $g\geq 2$, the  (finite) set of fixed points of $(\varphi_t)_{t\in\mathbb{R}}$ is always non-empty. We will always assume that $1$-form $\eta$  is \emph{Morse}, i.e.\ it is locally the differential of a Morse function.  Thus, zeros of $\eta$ are isolated and finite and all  correspond to either centers (see Figure~\ref{island}) or simple saddles (see Figure~\ref{simplesaddle}), see \S\ref{sec:singularities} (as opposed to degenerate \emph{multi-saddles} which have $2k$ separatrices for $k>2$, see Figure \ref{multisaddle}).


\subsection{Topology and measure class on locally Hamiltonian flows}\label{sec:measureclass}
One can define a \emph{topology} on locally Hamiltonian flows by considering perturbations of closed smooth $1$-forms by smooth closed $1$-forms. {With respect to this topology, the set of locally Hamiltonian flows   whose zeros are all Morse (hence isolated and finite, simple saddles or centers) is open and dense (and hence in particular \emph{generic} in the Baire category sense), see for example Lemma 2.3 in \cite{Ra:mix}}.  Let $\Sigma$ be the set of fixed points of $\eta$  and let $k$ be the cardinality of $\Sigma$.

\smallskip
{The \emph{measure-theoretical notion of  typical} that we use is defined as follows and coincide with the notion of typical induced by the  \emph{Katok fundamental class} (introduced by Katok in \cite{Ka:inv}}, see also \cite{NZ:flo}).  {We recall that two measures belong to the same measure class if they have the same sets of zero mesure (and hence induce the same notion of full measure, or \emph{typical})); thus, a \emph{measure class} is uniquely identified by a collection of sets which have measure zero with respect to all measures in the class.}
  Let $\gamma_1, \dots, \gamma_n$ be a base of the relative homology $H_1(M, \Sigma, \mathbb{R})$, where $n=2g+k-1$ ($k:=\#\Sigma$). The image of  $\eta$ by the period map $Per $ is {$Per(\eta) = (\int_{\gamma_1} \eta, \dots, \int_{\gamma_n} \eta) \in \mathbb{R}^{n}$. The pull-back $Per_* Leb$ of the Lebesgue measure class by the period map gives a measure class on closed $1$-forms (with $k$ critical points): explicitely, the measure zero sets for this measure class are all preimages through $Per$ of measure zero sets in  $\mathbb{R}^{n}$ (with respect to the Lebesgue measure $Leb$ on $\mathbb{R}^{n}$).
We say that a property is \emph{typical}  if it is satisfied for a set of locally Hamiltonian flows a \emph{full measure}, namely the complement of a measure zero set for this measure class.} 

\smallskip
A \emph{saddle connection} is a flow trajectory from a saddle to a saddle and a \emph{saddle loop} is a saddle connection from a saddle to the same saddle (see Figure \ref{island}). Notice that if the set of fixed points $ \Sigma $ contains a center, the island of closed orbits around it is automatically surrounded by a saddle loop homologous to zero (see Figure~\ref{island}). The set of locally Hamiltonian flows which have at least one saddle loop \emph{homologous to zero} form an \emph{open} set\footnote{Saddle loops homologous to zero are indeed persistent under small perturbations, see \S2.1 in \cite{Zo:how} or Lemma 2.4 in \cite{Ra:mix}.}. Flows in this open set decompose into several \emph{minimal components}\footnote{\emph{Minimal components}  are subsurfaces (possibly with boundary) on which the (restriction of the) flow is \emph{minimal}, in the sense that all semi-infinite trajectories are dense. As proved independently  by Maier \cite{Ma:tra}, Levitt \cite{Le:feu} and Zorich \cite{Zo:how}), each smooth area-preserving flow can be decomposed into up to  $g$  {minimal components} and  {periodic components}, i.e.~subsurfaces (possibly with boundary) on which all orbits are closed and periodic (as the disk filled by periodic orbits in Figure \ref{island}).}.
On the other hand, in the open set
$\mathscr{U}_{min}$ consisting of  locally Hamiltonian flows with only simple saddles and no saddle loops homologous to zero
 a typical flow (in the measure theoretical sense defined above) has no saddle connections and hence it is \emph{minimal}  by a result of  Maier \cite{Ma:tra} (or, in the language of special flows introduced in the next section, by the result of Keane~\cite{Keane} on IETs).

\begin{rem}\label{rem:reparametrization} Minimal locally Hamiltonian flows (as well as minimal components) can be seen as (singular) time-reparametrizations of \emph{translation flows} (linear flows on translation surfaces), i.e.\ they have the same orbits as a translation flow, but the movement along the orbits happens with different speed (and in particular it takes an infinite time to reach saddles). This follows for example from a result in \cite{Ma:tra}, which guarantees that any $1$-form $\eta$ without saddle loops homologous to zero is the real part of a holomorphic one form (see \cite{Zo:how}).
\end{rem}

\subsection{Singularities and normal forms}\label{sec:singularities}
 In this section we associate  an invariant to each non-degenerate fixed point which will play a crucial role in describing isomorphic singularities and their special flow representation.

\smallskip
Let $M$ be an $m$-dimensional $C^2$-manifold equipped with a volume form $\omega$. Let $f:M\to\R$ be a $C^2$-map whose critical points are isolated. Suppose that $p\in M$
is a critical point of $f$ and let us consider local coordinates $(x_1,\ldots, x_m)$ in a neighbourhood of $p$ so that
$(0,\ldots,0)$ are local coordinates of $p$. In these local coordinates $\omega_{(x_1,\ldots, x_m)}=V(x_1,\ldots, x_m)\,dx_1\wedge\ldots\wedge d x_m$,
where $V$ is a positive (or negative) function. Let
\[K_\omega(f,p):=\frac{\det\operatorname{Hess}(f)(0,\ldots,0)}{V^2(0,\ldots,0)}.\]
Since $df(p)=0$, $K_\omega(f,p)$ does not depend on the choice of local coordinates
and it imitates the notion of curvature (on the graph of $f$) at any critical point of $f$ even if $M$ is not equipped with any Riemannian metric.
Moreover, $K_\omega(f,p)\neq 0$ if and only if $p$ is a non-degenerate critical point and by the Morse lemma there exist
local coordinates $(x_1,\ldots, x_m)$ in a neighbourhood of $p$ such that
\[f(x_1,\ldots, x_m)=f(0,\ldots,0)-x^2_1-\ldots-x^2_k+x^2_{k+1}+\ldots+x^2_m\]
and  $\sgn K_\omega(f,p)=(-1)^k$.

\smallskip
Assume now that $M$ is two dimensional and  consider  a local Hamiltonian $H: U \to \mathbb{R}$, $U \subset M$, of a locally Hamiltonian flow $(\varphi_t)_{t\in\R}$ preserving the area form $\omega$.
 If $p$ is a fixed point of $(\varphi_t)_{t\in\R}$ (hence a critical point of $H$), then we can define
\[K_{\omega,X}(p):=K_\omega(H,p).\]
The quantity $K_{\omega,X}(p)$ does not depend on the choice of local Hamiltonian, hence it is well defined.

A fixed point $p$ is \emph{non-degenerate} exactly when $K_{\omega,X}(p)\neq 0$.
If $K_{\omega,X}(p)>0$ then $p$ is the centre of a topological disc filled with periodic orbits, as in Fig.~\ref{island}. If $K_{\omega,X}(p)<0$ then $p$ is a saddle point (see Fig.~\ref{simplesaddle}).

 \begin{figure}[h!]
  \subfigure[\label{island} Center]{
  \includegraphics[width=0.23\textwidth]{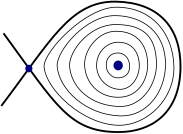} 	} \hspace{3mm} \subfigure[Saddle\label{simplesaddle}]{ \includegraphics[width=0.18\textwidth]{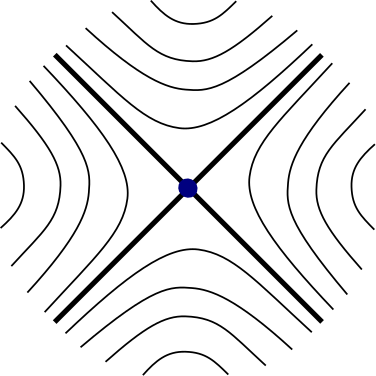}}\hspace{3mm}
\subfigure[Multisaddle\label{multisaddle}]{
\includegraphics[width=0.18\textwidth]{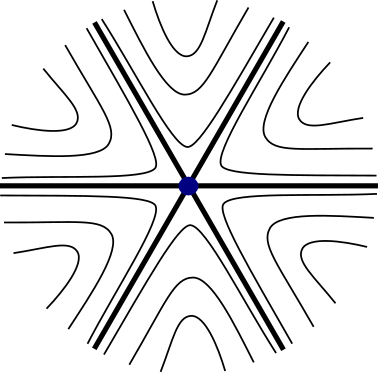} }	\hspace{3mm}
 \caption{Type of non-degenerate fixed points for an area-preserving flow.\label{saddles1}}
\end{figure}

\subsection{{Isomorphic saddles.}}\label{sec:isomorphic}
We will use the following working definition of \emph{isomorphic} (simple) \emph{saddles}.
\begin{defn}[Isomorphic saddles]\label{def:isomorphic} We say
that two saddles corresponding to fixed points $p_1,p_2$ of  $(\varphi_t)_{t\in\R}$ are \emph{isomorphic} iff $K_\omega(H,p_1)= K_\omega(H,p_2)$.
\end{defn}
Indeed, the above definition implies that, for both $i=1,2$, we can find local coordinates around $p_i$ so that $p_i$ is mapped to $(0,0)$, $\omega$ is given by the standard form $ dx \wedge dy$, and the local Hamiltonian has the form
$H (x,y) = K x y + \text{higher order terms},$
for a common value $K:= \sqrt{-K_\omega(H,p_1)}=\sqrt{-K_\omega(H,p_2)}$.  This property is satisfied if  there is a smooth symplectic (preserving $\omega$) isomorphism mapping flow trajectories to flow trajectories among the two local neighbours.{
\begin{defn}[Isomorphic saddles locus] \label{def:isomorphiclocus}
 We will denote by $\mathcal{K}$ the set of locally Hamiltonian flows on a surface of genus two in $\mathcal{U}_{min}$ which have two isomorphic simple saddles.
\end{defn}
The notion of \emph{typical} on $\mathcal{K}\subset \mathcal{U}_{min}$ (which is the notion used in the statement of Theorem~\ref{thm:singsp}) is obtained restricting the notion of Katok measure class (see \S\ref{sec:measureclass}) to $\mathcal{K}$ as follows. Consider the period map $Per: \mathcal{K}\to \mathbb{R}^n$ obtained restricting the period map $Per: \mathcal{U}_{min}\to \mathbb{R}^n$ defined in  \S\ref{sec:measureclass} to $\mathcal{K}\subset \mathcal{U}_{min}$.   We say that a property holds for a \emph{typical} flow in $\mathcal{K}$ if it \emph{fails} on a set of measure zero  with respect to the pull back of the Lebesgue measure class via $Per: \mathcal{K}\to \mathbb{R}^n$, namely it fails on the preimage $Per^{-1}(Z)$ of a set $Z\subset \mathbb{R}^n$ with $Leb(Z)=0$. See also Remark~\ref{rem:fullmeasure} for a reformulation of this notion of typical in terms of special flows representations.
}

\subsection{Special flows}\label{sec:sfdef}
Let us now recall the definition of \emph{special flow}.  Let $T$ be an automorphism of a standard (Borel)  probability space $(X,\mathcal{B},\mu)$. Let $f:X\to\R_{>0}$ be an integrable function  so that $\inf_{x\in X} f(x)>0$. Let us denote by $S_n(f)(x)$ the  \emph{Birkhoff sum} defined by
\[S_n(f)(x)=\left\{
\begin{array}{rcl}
\sum_{0\leq i<n}f(T^ix)&\text{if}& n\geq 0\\
-\sum_{n\leq i<0}f(T^ix)&\text{if}& n< 0.
\end{array}
\right.\]
The \emph{special flow} $(T^f_t)_{t\in\R}$ built \emph{over} the automorphism $T$ and \emph{under} the \emph{roof function} $f$  acts
on
\[X^f:=\{(x,r)\in X\times\R: 0\leq r<f(x)\}\]
so that
\[T^f_t(x,r)=(T^nx,r+t-S_n(f)(x)),\]
where $n=n(t,x) \in\Z$ is a unique integer number with $S_n(f)(x)\leq r+t<S_{n+1}(f)(x)$.  Under the action of $(T^f_t)_{t>0}$,  a point $(x,y) \in X_f$ moves  with unit velocity along the vertical line up to the point $(x,f(x))$, then jumps instantly to the point $\left( T(x),0 \right)$, according to the base transformation and afterward it continues its motion along the vertical line until the next jump and so on.  The integer $n(t,x)$ (for $t>0$) is the number of discrete iterations of the map $T$ undergone by the orbit of $x$ up to time $t$.

The flow $(T^f_t)_{t\in\R}$ preserves the finite measure $\mu^f$ which is the restriction of $\mu\times\Leb_\R$ to $X^f$.
If $T$ is ergodic with respect to $\mu$, it is easy to see then  $(T^f_t)_{t\in\R}$ is also ergodic (with respect to  $\mu^f$), see e.g.~\cite{CFS:erg}.

\subsection{Roofs with logarithmic singularities}\label{sec:roofs}
 We now define the class of functions which we work with and arise as roof functions of locally Hamiltonian flows with non-degenerate saddles.

Let $T$ be an IET with endpoints of the continuity intervals $0:=\beta_0<\beta_1< \dots \beta_{d-1}<\beta_d:=|I|$ (see ~\S\ref{sec:main_sf}).
\begin{defn}[logarithmic singularities]\label{def:SymLog}
We say that a function $f $ has \emph{pure logarithmic singularities} at the endpoints $\beta_i$ of $T$ and write  $f \in \pLog{\sqcup_{i=0}^{d-1} I_i}$ if it is of the form
\begin{equation}\label{eq-1}
f(x)=\sum_{0\leq i<d}\chi_{(\beta_i,\beta_{i+1})}(x) \big(-C_i^+\log(x-\beta_i)-C_{i+1}^-\log(\beta_{i+1}-x)\big),
\end{equation}
for some constants $C_i^{\pm}\geq 0$, not all simultaneously zero. Notice that the signs are chosen so that $f\geq 0$.

We say that $f$ has pure \emph{symmetric} logarithmic singularities at the endpoints $\beta_i$ of $T$ and write $f \in {\pSymLog{\sqcup_{i=0}^{d-1} I_i}}$  if in addition  we have that $C_i^+=C_{i+1}^-$ for $0\leq i< d$, so that the function is \emph{symmetric} on each interval $(\beta_i,\beta_{i+1})$.

We say that $f$ has \emph{logarithmic singularities} (resp. \emph{symmetric logarithmic singularities}) and write $f \in \Log{\sqcup_{i=0}^{d-1} I_i}$ (resp.\  $f \in  \SymLog{\sqcup_{i=0}^{d-1} I_i}$) if and only if $f$ can be written as $f=f^p+g$ where $f^p \in \pLog{\sqcup_{i=0}^{d-1} I_i}$ (resp.\  $f \in \pSymLog{\sqcup_{i=0}^{d-1} I_i}$) has pure logarithmic (symmetric) singularities and $g:I\to\R$ is a function of bounded variation.
\end{defn}
 We remark that we allow some of the $C_i^\pm$ to be zero; so $f$ could have a finite one-sided limit at some $\beta_i$ (but we assume that at least one of the singularities is indeed logarithmic).
We notice also that this symmetry condition (which is symmetric on \emph{each} exchanged interval, i.e.\ a function   $f \in \SymLog{\sqcup_{i=0}^{d-1} I_i}$) is symmetric on each continuity interval $(\beta_i,\beta_{i+1})$ is not the same than appears in other  works on locally Hamiltonian flows with non-degenerate saddles (where {symmetric logarithmic singularities} refers to functions in  ${\Log{\sqcup_{i=0}^{d-1} I_i}}$ such that $\sum_{i=0}^{d-1}C_i^+= \sum_{i=1}^{d}C_i^-$).  We will use the assumption that the saddles are isomorphic in Theorem~\ref{thm:singsp} to obtain this stronger form of symmetry for such (genus 2) surfaces. 

\subsection{Reduction to symmetric special flows}\label{sec:reduction}
It is well known that minimal (or minimal components of) locally Hamiltonian flows   can be represented as special flows over  rotations (in genus one) or interval exchange transformations (for $g\geq 2$). The roof function has a finite number of singularities (where it explodes to infinity) which are of \emph{logarithmic-type} (see the form of singularities in Def.~\ref{def:SymLog})  if the fixed points are simple saddles or \emph{power-type} singularities  (i.e.~singularities of the form $C_i^{\pm}/\vert x-\beta_i \vert^{\alpha_i}$ for some power $0<\alpha_i<1$)  in presence of (degenerate) multi-saddles (as in Figure~\ref{multisaddle}) or stopping points.  In case of minimal flows with only simple saddles (or more in general when there are no saddle loops homologous to zero), the logarithmic singularities display a form of \emph{symmetry}\footnote{Symmetry, or asymmetry, of the logarithmic singularities are crucial in determining  the mixing properties of the flow, see \S\ref{sec:history}. Asymmetry is usually introduced by  the presence of saddle loops homologous to zero, we refer for example \cite{Ra:mix} for details.}.
 For Theorems~\ref{thm:singsp} and \ref{thm:ss_sf} we require  a stronger form of symmetry for both the roof and the base transformation.

\smallskip
The following Lemma provides the reduction to symmetric special flows which we need to prove the result on flows in genus two (see in particular Corollary~\ref{cor:reduction}). While ($ii$) is standard (and included only for completeness),  ($i$) and ($iii$) provide the required more detailed information on the symmetry of the base and the roof (in particular the precise values of the constants $C_i^\pm$).

\begin{lem}[Symmetries of the reduction to special flows]\label{lemma:reduction}
Let $(\varphi_t)_{t \in \mathbb{R}}$ be a \emph{minimal} locally Hamiltonian flow on a surface $M$.  Then,  $(\varphi_t)_{t \in \mathbb{R}}$ is \emph{measurably isomorphic} to a special flow $(T^f)_{t \in \mathbb{R}}$ over an IET $T$ (whose endpoints are denoted by $\beta_i$, $0 \leq i   \leq d$)  under a roof function $f:I  \to \mathbb{R}_{>0}\cup \{ + \infty \}$. The special flow representation can be chosen to that:
\begin{itemize}
\item[$(i)$] if $M$ has genus $1$ or $2$ then 
 $T$ is a $d$-IET given by a  \emph{symmetric permutation} (with $d= 2g$ if there is a unique saddle or $d=2g+1$  if there are two);
\item[$(ii)$] when  $(\varphi_t)_{t \in \mathbb{R}}$ has only  \emph{simple} saddles, $f\in \Log{\sqcup_{i=0}^{d-1}I_i}$;
\item[$(iii)$] under the assumptions of ($ii$), the constants $C_i^\pm$ in \eqref{eq-1} are given by the values of the invariants $K_{\omega,X} (p)$ associated to saddle points: if the forward $(\varphi_t)_{t \in \mathbb{R}}$-orbit of $\beta_i$ meets the saddle point $p$ before returning to $I$, then
\begin{equation} \label{saddle_constants} C_i^+=C_{i}^-=\frac{1}{\sqrt{-K_{\omega,X}(p)}}.\end{equation}
\end{itemize}
\end{lem}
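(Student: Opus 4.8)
\textbf{Proof plan for Lemma~\ref{lemma:reduction}.}

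The plan is to construct the special flow representation by choosing a suitable transversal segment and analyzing the first return map and first return time, tracking carefully the symmetry structure coming from the hyperelliptic involution. First I would invoke Remark~\ref{rem:reparametrization}: since $(\varphi_t)$ is minimal, by the result of Maier the $1$-form $\eta$ has no saddle loops homologous to zero and is the real part of a holomorphic $1$-form $\omega_0$, so $(M,\omega_0)$ is a translation surface and $(\varphi_t)$ is a singular time reparametrization of the vertical translation flow. For genus one this surface lies in $\mathcal{H}(0)$ or $\mathcal{H}(1)$ (after adding marked points) and for genus two in $\mathcal{H}(2)$ or $\mathcal{H}(1,1)$; these are precisely the strata which are \emph{hyperelliptic} (all of $\mathcal{H}(2)$ and $\mathcal{H}(1,1)$ carry the hyperelliptic involution). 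Hence one can choose a transversal through a Weierstrass point (a fixed point of the hyperelliptic involution $\sigma$) on which the first return map of the translation flow is an IET with a symmetric permutation, with $d=2g$ intervals if there is a single saddle (a single zero of $\omega_0$ of order $2g-2$) or $d = 2g+1$ if there are two simple zeros — this is the standard hyperelliptic reduction and gives part ($i$). For ($ii$), one computes the first return time of the \emph{reparametrized} flow: away from the singularities the roof is smooth (indeed piecewise of bounded variation), while near a simple saddle the asymptotics of the return time to a transversal for a locally Hamiltonian flow whose local Hamiltonian is $H(x,y) = Kxy + \text{h.o.t.}$ is logarithmic, of the form $|C\log|x-\beta_i||$ — this is a classical local computation (cf.\ Arnold, Kochergin, Ulcigrai), obtained by integrating $dt = dH/(\partial H/\partial x \cdot \dots)$, equivalently the time to traverse a neighborhood of the saddle along an orbit passing at signed distance $\sim(x-\beta_i)$ from the separatrix, and it yields a contribution $\asymp \log(1/|x - \beta_i|)$.

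The heart of the argument, and the genuinely new content beyond what is ``standard'', is ($iii$): identifying the constants. Here the key point is that the coefficient of the logarithmic blow-up of the return time is governed entirely by the \emph{leading quadratic part} of the local Hamiltonian at the saddle, i.e.\ by $K_{\omega,X}(p) = K_\omega(H,p)$. Concretely, I would work in Morse-type coordinates near $p$ in which $\omega = dx\wedge dy$ and $H(x,y) = Kxy + O(3)$ with $K = \sqrt{-K_{\omega,X}(p)}$; the flow is $\dot x = \partial H/\partial y = Kx + \dots$, $\dot y = -\partial H/\partial x = -Ky + \dots$, so to leading order $x(t) = x_0 e^{Kt}$, $y(t) = y_0 e^{-Kt}$. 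An orbit entering a fixed small box near $p$ at height $y_0$ and horizontal offset $x_0$ exits when $x$ reaches the box boundary, taking time $\sim \frac{1}{K}\log(1/|x_0|)$ (and symmetrically on the incoming side $\sim \frac{1}{K}\log(1/|y_0|)$); since the transversal is chosen so that the offset of a point at distance $x-\beta_i$ from the endpoint $\beta_i$ along the transversal is comparable to $x-\beta_i$, one gets precisely $C_i^+ = C_i^- = 1/K = 1/\sqrt{-K_{\omega,X}(p)}$, and the symmetry $C_i^+ = C_i^-$ (singularities symmetric on each side) is automatic because the \emph{same} constant $K$ governs both the contracting and the expanding directions at a saddle — this is a feature of the area-preserving (symplectic) normalization $\omega = dx\wedge dy$, which forces the hyperbolic eigenvalues to be $\pm K$. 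I would also check that the two symmetric log-contributions at $\beta_i$ — one from the orbit segment after $\beta_i$ hitting $p$ going forward, and one from the orbit before $\beta_{i+1}$ — come from the \emph{two} separatrices of the \emph{same} saddle, using the hyperelliptic symmetry of the chosen transversal (the Weierstrass point and the involution $\sigma$ swap the appropriate separatrices), which is exactly why the constants pair up as $C_i^+ = C_{i+1}^-$ when the flow is restricted to the isomorphic-saddles situation; but in the generality of ($iii$) the statement is just the local identity \eqref{saddle_constants} per saddle.

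I expect the main obstacle to be the \emph{bookkeeping} in ($iii$): making rigorous that (a) the higher-order terms in the local normal form do not affect the logarithmic coefficient (only an additive bounded-variation error, absorbed into $g_i$), which requires a careful but routine estimate comparing the true orbit with the linearized one (e.g.\ via a Hartman–Grobman-type argument or direct Gronwall estimates on $\log x(t)$); and (b) correctly matching up \emph{which} endpoint $\beta_i$ corresponds to \emph{which} separatrix of \emph{which} saddle, i.e.\ the combinatorics of how the stable/unstable separatrices of the saddle(s) cut the transversal into the $d$ subintervals, and verifying this is consistent with the symmetric permutation from ($i$). The secondary technical point is handling the transition from the translation-surface transversal (where the return map is literally an IET) back to the reparametrized flow, checking that the identification is a genuine measurable isomorphism of flows (not just an orbit equivalence) — this is where the integrability of $f$, i.e.\ $\int f < \infty$, must be checked, and it follows from $\log$-integrability of the singularities together with the bounded transversal length. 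Parts ($i$) and ($ii$) I would treat quickly by citation to the standard reduction (Keane, Zorich, and the cited works on locally Hamiltonian flows), reserving the detailed argument for the constant computation in ($iii$).
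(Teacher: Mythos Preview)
Your plan is sound and for parts ($i$) and ($ii$) it matches the paper's proof essentially verbatim: hyperelliptic involution, transversal through a Weierstrass point, deduce $T = S\circ T^{-1}\circ S$ with $S(x)=|I|-x$, hence symmetric permutation; then cite Arnold/Conze--Fr\k{a}czek for the logarithmic form of the return time.

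For part ($iii$) you and the paper make \emph{opposite} normalization choices, and this is worth noting. You take Darboux coordinates ($\omega = dx\wedge dy$) and accept higher-order terms in $H = Kxy + O(3)$, then propose to control those terms via Gronwall/Hartman--Grobman to show they contribute only a bounded-variation error. The paper instead applies the Morse lemma to make $H(x,y) = xy$ \emph{exactly}, at the cost of a non-constant area form $\omega = V(x,y)\,dx\wedge dy$. The ODE then becomes $x' = x/V$, $y' = -y/V$ with $xy$ constant, and the transit time through an $\varepsilon$-box is the explicit integral
\[
\tau(x) = \int_{|x|/\varepsilon^2}^1 \frac{V\big(\pm\varepsilon(\sgn(x)s,\,\tfrac{|x|/\varepsilon^2}{s})\big)}{s}\,ds = -V(0,0)\log|x| + (\text{BV}),
\]
the last equality being a short cited lemma. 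Since $K_{\omega,X}(p) = -1/V(0,0)^2$, the constant drops out immediately. This buys a cleaner computation with no perturbative estimates: the ``hard'' part is entirely absorbed into a one-line integral identity rather than a linearization argument. Your approach works too, but the obstacle you flagged (item (a), higher-order terms not affecting the log coefficient) is genuinely the bulk of the work in your route, whereas the paper sidesteps it.

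One small point to tighten: you say the offset along the transversal is ``comparable to'' $x-\beta_i$; for the \emph{exact} constant $1/K$ you need equality up to a BV error, not just comparability. The paper handles this by checking that the natural local transversals $s\mapsto \pm(s/\varepsilon,\varepsilon)$ are already in the standard parametrization $\eta(d\gamma)=1$ used to define the IET, so no rescaling is needed. You should verify the analogous statement in your Darboux coordinates.
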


The proof of Lemma~\ref{lemma:reduction} is presented below. Combining $(i)-(iii)$ of  Lemma~\ref{lemma:reduction}, we have the following Corollary which we will use to prove Theorem~\ref{thm:singsp}.
\begin{cor}\label{cor:reduction}
If  $M$ has genus two and  $(\varphi_t)_{t \in \mathbb{R}}$ is a minimal locally Hamiltonian flow with  two isomorphic simple saddles $p_1,p_2\in M$, then it is isomorphic to a special flow over an IET $T$ with a symmetric permutation $\pi$ with $d=5$ and roof  $f\in \SymLog{\sqcup_{i=0}^{4}I_i}$. More precisely, there exists $0\leq i_0<5$ such that $C^+_{i_0}=C^-_{i_0+1}=0$ and
\[C_i^+ =C_{i+1}^-=\frac{1}{\sqrt{-K_{\omega,X}(p_1)}}=\frac{1}{\sqrt{-K_{\omega,X}(p_2)}}>0\quad\text{ for all }\quad i\neq i_0.\]
\end{cor}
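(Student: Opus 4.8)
\textbf{Proof plan for Corollary~\ref{cor:reduction}.}
The plan is to simply specialize Lemma~\ref{lemma:reduction} to the case of a minimal locally Hamiltonian flow on a genus two surface with exactly two non-degenerate (hence simple) saddles $p_1, p_2$, using the assumption that the two saddles are isomorphic, i.e.\ $K_{\omega,X}(p_1)=K_{\omega,X}(p_2)$ by Definition~\ref{def:isomorphic}.

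First I would invoke part~($i$) of Lemma~\ref{lemma:reduction}: since $M$ has genus $g=2$ and there are two saddles, the special flow representation can be taken over a $d$-IET with $d=2g+1=5$ and a symmetric permutation $\pi$. Next, part~($ii$) applies because both saddles are simple, so the roof function $f$ belongs to $\Log{\sqcup_{i=0}^{4}I_i}$, i.e.\ $f=f^p+g$ with $f^p\in\pLog{\sqcup_{i=0}^{4}I_i}$ given by constants $C_i^\pm$ as in \eqref{eq-1} and $g$ of bounded variation. Then I would use part~($iii$): for each endpoint $\beta_i$ whose forward orbit meets a saddle $p$ before returning to $I$, the associated constants satisfy $C_i^+=C_i^-=1/\sqrt{-K_{\omega,X}(p)}$. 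Since both saddles have the same invariant $K:=K_{\omega,X}(p_1)=K_{\omega,X}(p_2)<0$, every such pair of constants equals the common value $1/\sqrt{-K}>0$; in particular $C_i^+=C_{i+1}^-$ for all indices $i$ for which the relevant endpoint does meet a saddle, which (again by part~($iii$), read contrapositively) gives $C_i^+=C_{i+1}^-=0$ for the remaining indices. This is exactly the assertion that $f\in\SymLog{\sqcup_{i=0}^{4}I_i}$.

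The one point that requires a short combinatorial argument is identifying how many endpoints fail to hit a saddle, i.e.\ producing the index $i_0$ with $C^+_{i_0}=C^-_{i_0+1}=0$ and $C_i^+=C_{i+1}^-=1/\sqrt{-K}>0$ for all $i\neq i_0$. This is a counting argument on separatrices: a simple saddle has two incoming and two outgoing separatrices, so the two saddles together emit $4$ outgoing separatrices, and each endpoint $\beta_i$ of the symmetric $5$-IET whose forward orbit hits a saddle ``uses up'' one incoming separatrix slot; since there are $5$ discontinuity points $\beta_1,\dots,\beta_4$ together with the way the roof singularities at $\beta_0$ and $\beta_5=|I|$ are accounted for in the $\SymLog$ bookkeeping, exactly one of the $5$ slots is not hit by a saddle, yielding a single pair of vanishing constants indexed by some $0\leq i_0<5$. (This matches the count $d=2g+1$: the ``extra'' interval beyond the $2g=4$ forced by the saddle separatrices corresponds to the one free constant.) Combining this with the equality of the two invariants gives the displayed formula.

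The main obstacle, such as it is, is bookkeeping rather than mathematics: one must be careful that the symmetric permutation $\pi$ and the chosen fundamental domain for the Poincar\'e section make the roof singularities land symmetrically on the continuity intervals $(\beta_i,\beta_{i+1})$ in the precise sense of $\SymLog$ (each $f|(\beta_i,\beta_{i+1})$ symmetric), rather than merely satisfying the weaker global symmetry $\sum C_i^+=\sum C_{i+1}^-$. This is precisely what part~($iii$) of Lemma~\ref{lemma:reduction} has been arranged to deliver — the constants are pinned down individually, not just in aggregate — so once Lemma~\ref{lemma:reduction} is in hand the Corollary follows with no further analytic input, only the substitution $K_{\omega,X}(p_1)=K_{\omega,X}(p_2)$.
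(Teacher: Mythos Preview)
There is a genuine gap. Part~($iii$) of Lemma~\ref{lemma:reduction} gives $C_i^+ = C_i^-$ --- the two coefficients at the \emph{same} endpoint $\beta_i$ coincide --- not $C_i^+ = C_{i+1}^-$, which is the per-interval symmetry required for $\SymLog{\sqcup I_i}$. You conflate these two statements when you write that ``every such pair of constants equals the common value'' and that ``this is precisely what part~($iii$)\dots has been arranged to deliver.'' Knowing that all nonzero coefficients equal $1/\sqrt{-K}$ does not by itself exclude a configuration where, say, $\beta_i$'s forward orbit meets a saddle (so $C_i^+=1/\sqrt{-K}$) while $\beta_{i+1}$'s does not (so $C_{i+1}^-=0$); that would violate $\SymLog{\sqcup I_i}$. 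Your separatrix count (four incoming separatrices, five intervals) tells you how many coefficients vanish, but not that the vanishing ones sit together as a pair $(C_{i_0}^+,C_{i_0+1}^-)$ on a single interval.

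The paper closes this gap with the hyperelliptic involution $\iota$, which is the step you are missing. If the forward orbit of $\beta_i$ meets a saddle $p$, then applying $\iota$ (which reverses the flow and acts on $I$ as $S(x)=|I|-x$) shows that the backward orbit of $S\beta_i=|I|-\beta_i$ meets $\iota(p)$; since the symmetric permutation sends $(\beta_i,\beta_{i+1})$ onto $(|I|-\beta_{i+1},|I|-\beta_i)$, this is exactly the statement that the forward orbit of $\beta_{i+1}$ meets $\iota(p)$ before returning. Now~($iii$) applied at $\beta_i$ and at $\beta_{i+1}$ gives $C_i^+=1/\sqrt{-K_{\omega,X}(p)}$ and $C_{i+1}^-=1/\sqrt{-K_{\omega,X}(\iota(p))}$, and these agree because $\{p,\iota(p)\}\subset\{p_1,p_2\}$ are isomorphic. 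The same involution argument, run when $\beta_i$'s forward orbit does \emph{not} meet a saddle, shows $\beta_{i+1}$'s doesn't either, and the proof of~($i$) identifies this pair as the unique exceptional one, producing the single index $i_0$. None of this is visible from a pure counting argument; the involution is doing real work here.
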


\begin{rem}\label{rk:realizable}
The conclusion of Part ($i$) of Lemma~\ref{lemma:reduction} also holds more in general when the flow $(\varphi_t)_{t \in \mathbb{R}}$ has one or two saddles and is the time-change of a linear flow on a translation surface  $M$ in a hyperelliptic component of stratum of the form $\mathcal{H}(2g-2)$ or $\mathcal{H}(g-1,g-1)$, $g\geq 1$ (the  saddles then  have respectively $4g-2$ or $2g$ and $2g$ separatrices). On the other hand,  to have a roof $f\in \mathcal{S}ym\mathcal{L}og$ one needs by Part ($ii$) to have only simple saddles (with $4$ separatrices), so this forces $g=2$ and two singularities.  Thus, the special flows in Theorem~\ref{thm:ss_sf} arise as special representation of \emph{minimal} smooth surface flows only in genus two.
Finally,  notice  also that the assumption that the two saddles are isomorphic is needed to have the symmetry of the constants.
\end{rem}

\begin{proof}[Proof of Lemma~\ref{lemma:reduction}]
Since  $(\varphi_t)_{t\in\mathbb{R}}$ is minimal, any curve $\gamma$ transverse to $(\varphi_t)_{t\in\mathbb{R}}$ is a \emph{global} transversal (i.e.\ intersects all infinite orbits) and hence provides a Poincar{\'e}  section for $(\varphi_t)_{t\in\mathbb{R}}$. Let us say that the parametrization of $\gamma $ is   \emph{standard} if $\gamma:I\to M$ (where $I$ is an interval starting at zero) is parametrized  so that $\eta(d\gamma)=1$. It is well known (see for example \cite[Section 4.4]{Yo}) that, in
the standard parametrization, the Poincar{\'e}  first return map $T:I\to I$ to $\gamma$ is an IET. The number of exchanged intervals is  $d=2g+k-1$ (where $k$ is the cardinality of the set of fixed points) if the endpoints of $\gamma$ are chosen on separatrices and, if $0=\beta_0<\beta_1<\ldots<\beta_d=|I|$
denote  the endpoints of exchanged intervals, the (forward) trajectories from all the $\beta_i$'s are separatrices which end in  a saddle (notice  that there are no centers since $(\varphi_t)_{t\in\mathbb{R}}$  is minimal) and do not return to $I$, with the exception of two of them,  which first return to the endpoints of $\gamma$  or its backward trajectory is a separatrix which starts from a saddle.

To prove ($i$), it is convenient to recall that  $(\varphi_t)_{t\in\mathbb{R}}$ is a time-change of a translation flow $(h_t)_{t\in\mathbb{R}}$ (see Remark~\ref{rem:reparametrization}). Let us denote by $(M, \omega)$ the translation structure on $M$. If $M$ has genus one or two, then it belongs to one of the strata $\mathcal{H}(0),$ $\mathcal{H}(2)$ or $\mathcal{H}(1,1)$ 
  and it admits a \emph{hyperelliptic involution}, i.e.\ there is a diffeomorphism $\iota: M \to M$ (affine in the coordinate charts of $(M, \omega)$) such that $\iota^2 =Id$. Let us choose $\gamma$ so that $\gamma(I)$ is an interval in $(M,\omega)$ and the image $\gamma(x_0)$ of the midpoint $x_0=\vert I \vert /2$ of $I$ is a \emph{Weierstrass point}, i.e.\ a fixed point of $\iota$, i.e.~$\iota(\gamma(x_0))=\gamma(x_0)$.  Thus $\iota$ fixes $\gamma$:
 let us denote $S: I \to I$ the symmetry such that $\iota (\gamma(x)) = \gamma(S(x))$.  Moreover, $\iota$ inverts the direction of trajectories of $(h_t)_{t\in\mathbb{R}}$, so that we have $h_{t}(\iota (q)) =\iota (h_{-t}(q))$ for all $q \in \gamma, t>0$.  Observe that this implies that the \emph{backward} trajectory from $q$  first returns to $\gamma$ in $p$ iff  the \emph{forward} trajectory from $\iota (q)$ first return to $\gamma$ in $\iota(p)$.

 Let $q\in\gamma$ be the first  return of the forward trajectory of $p\in \gamma$ to $\gamma$; if $x,y\in I$ are such that $\gamma(x)=p, \gamma(y)=q$,  since $T$ is by definition the first return map in the coordinates on $I$, this means that $T(x) = y$. Remark that equivalently $p$ is the first return of the \emph{backward} trajectory from $q$ to $\gamma$. Applying $\iota$, $\iota(p),\iota(q)$ have coordinates respectively $S(x), S(y)$ and, by the observation in the previous paragraph, the first return of the forward trajectory from $\iota (q)$ to $\gamma$ is $\iota(p)$ (since $p$ as just remarked is the first backward return of $q$). In coordinates, this can be written as $ T (S(y)) = S(x)$. Combining both equations in coordinates and recalling that $S^2=id$, we get
\begin{equation}\label{IETsymmetry}
S(x) = T(S(y)) = T(S (T(x)))  \qquad \Leftrightarrow \qquad T(x)= S\circ T^{-1}\circ S(x),
\end{equation}
for all $x \in I$.  Since in the translation structure $S:I \to I$ is an affine symmetry and it fixes $x_0=\vert I \vert /2$, $S$ must be of the form $S(x)= \vert I \vert -x$. One can then show that \eqref{IETsymmetry} forces the $T$ to be symmetric, i.e.\ the permutation $\pi$ must reverse the order of the intervals.
This concludes the proof of ($i$).

\smallskip
By standard ergodic theory (see e.g.~\cite{CFS:erg}), $(\varphi_t)_{t\in\mathbb{R}}$ is metrically isomorphic to the special flow over its Poincar{\'e} section $T$ under the function $f$ given by the first return time.
If all saddles are \emph{simple}, by the local form of Hamiltonian saddles (as first remarked by Arnold in \cite{Ar:top}, see also \cite[\S~7.1]{Co-Fr})  the first return time function $f:I\to\R_{>0}\cup \{+\infty\}$ is given by $f\in \Log{\sqcup_{i=0}^{d-1} I_i}$, i.e.~it has the form
\[f(x)=\sum_{0\leq i<d}\big(-C_i^+\log(x-\beta_i)-C_{i+1}^-\log(\beta_{i+1}-x)\big)\chi_{(\beta_i,\beta_{i+1})}(x)+g(x),\]
where $g:I\to\R$ is of bounded variation. This concludes the proof of $(ii)$.


Let us now show that if $\gamma(\beta_i)$ is the first backward hitting point of  a separatrix incoming to a saddle $p$ to $\gamma(I)$ then  $(iii)$  hold.
Choose local coordinates $(x,y)$ in a neighbourhood $U$ of $p$ and a local Hamiltonian so that $H(x,y)=xy$. Then $\omega(x,y)=V(x,y)dx\wedge dy$, where $V$ is a positive (or negative)
smooth map. Fix $\vep>0$ such that $[-\vep,\vep]\times[-\vep,\vep]\subset U$. In local coordinates the differential equation associated with the vector field $X$ is given by
\[x'=\frac{x}{V(x,y)},\; y'=-\frac{y}{V(x,y)}\qquad \text{ and }\quad t  \mapsto H(x(t),y(t))=x(t)y(t)\quad \text{ is constant.}\]
Therefore the forward semiorbit of any $\pm(x/\vep,\vep)$ with $x\in[-\vep^2,\vep^2]\setminus \{0\}$ leaves the square $[-\vep,\vep]\times[-\vep,\vep]$ at $\pm\sgn(x)(\vep, x/\vep)$. Moreover,
the time it takes to go through the square is
\begin{align*}
\tau(x)&=\int_0^{\tau(x)}dt=\int_0^{\tau(x)}\frac{V(x(t),x/x(t))x'(t)}{x(t)}dt
=\int_{|x|/\vep^2}^{1}\frac{V\big(\pm\vep\big(\sgn(x) s,\frac{|x|/\vep^2}{s}\big)\big)}{s}ds.
\end{align*}
To get the last equality we use the substitution $x(t)=\pm\sgn(x)\vep s$.
By Lemma A.1 in \cite{Fr-Ul}, $\tau(x)=-V(0,0)\log x+g(x)$, where $g:[-\vep^2,\vep^2]\to\R$ is of bounded variation. Let us consider the transversal curves $\gamma:[-\vep^2,\vep^2]\to M$
given by $\gamma(s)=\pm(s/\vep,\vep)$ or $\gamma(s)=\pm(\vep,s/\vep)$. Since $\eta$ in local coordinates is given by $\eta_{(x,y)}=y\,dx+x\,dy$, we
always have $\eta(d\gamma)=1$ so all of them are standard. As
\[K_{\omega,X}(p)=\frac{\det\operatorname{Hess}(H)(0,0)}{V^2(0,0)}=\frac{-1}{V^2(0,0)},\]
we have $V(0,0)=1/{\sqrt{-K_{\omega,X}(p)}}$. This completes the proof of $(iii)$ and hence of the Lemma.
\end{proof}

\begin{proof}[Proof of Corollary~\ref{cor:reduction}]
By Lemma  \ref{lemma:reduction} one can choose the special  representation of  $(\varphi_t)_{t \in \mathbb{R}}$  so that (by $(i)$, since  $(\varphi_t)_{t \in \mathbb{R}}$ has  two saddles)  $\pi$ is  symmetric on $d=2g+1=5$  and furthermore, since the saddles are both simple, by $(ii)$, $f\in \Log{\sqcup_{i=0}^{4}I_i}$.

 Suppose that the forward $(\varphi_t)_{t \in \mathbb{R}}$-orbit (or equivalently $(h_t)_{t\in\R}$-orbit) of $\beta_i$ ($0\leq i<5$) meets the saddle point $p$ before returning to $I$. Applying the involution $\iota$, we obtain that  the backward $(h_t)_{t\in\R}$-orbit of $S\beta_i=|I|-\beta_i$  meets the saddle point $\iota(p)$ before backward returning to $I$. Since $T$ transforms $(\beta_i,\beta_{i+1})$
on $(|I|-\beta_{i+1},|I|-\beta_i)$ by a translation, it follows that the forward $(h_t)_{t \in \mathbb{R}}$-orbit (or equivalently
$(\varphi_t)_{t\in\R}$-orbit) of $\beta_{i+1}$  meets the saddle point $\iota(p)$ before returning to $I$.
By $(iii)$ in Lemma~\ref{lemma:reduction} and the fact that $p$ and $\iota(p)$ are isomorphic, we have
\[C_i^+=\frac{1}{\sqrt{-K_{\omega,X}(p)}}=\frac{1}{\sqrt{-K_{\omega,X}(\iota(p))}}=C^-_{i+1}.\]
The same argument shows that if  the forward $(\varphi_t)_{t \in \mathbb{R}}$-orbit  of $\beta_i$ does not meet any saddle point  before returning to $I$ then $\beta_{i+1}$ satisfies the same property. Then $C^+_{i}=C^-_{i+1}=0$. By the proof of $(i)$ in Lemma~\ref{lemma:reduction}, $\beta_i$ and $\beta_{i+1}$ are the only two points satisfying this property, which completes the proof.
\end{proof}

\begin{rem}\label{rem:fullmeasure}
In the reduction described above of a locally Hamiltonian flow to a special flow over an IET $T$, one can see that the length of each interval $(\beta_i, \beta_{i+1})$ exchanged by $T$ coincide with one of the coordinates of $Per(\eta)$, where  we recall that $Per $ denotes the period map defined in \S\ref{sec:measureclass}.
{Thus,
for every subset $\mathcal{U}\subset \mathcal{U}_{min}$ of locally Hamiltonian flows,  the set $\{ Per (\eta), \eta \in \mathcal{U}\} $ has full Lebesgue measure as long as a full measure set of IET on $d$ intervals and fixed permutation (with respect to the Lebesgue measure on the lenghts of the intervals) appears in the base of special flows representations of flows in  $\mathcal{U}$.} 

{
Furthermore, to show that a property is \emph{typical} within  a subset $\mathcal{U}\subset \mathcal{U}_{min}$ of locally Hamiltonian flows (in the sense of \S\ref{sec:isomorphic} for $\mathcal{U}=\mathcal{K}$), it is sufficient to show that it holds for \emph{every} special flow representation of a flow in $\mathcal{U}$ over a full measure set of IETs in the base (in this way it can only fail only on the preimage via $Per$ of a zero Lebesgue mesure set). In particular,
to show that singularity of the spectrum holds for a typical flow in the isomorphic saddle locus $\mathcal{K}$ (recall Definition~\ref{def:isomorphiclocus}), it is enough to show that for almost every IET with a  symmetric permutation $\pi$ with $d=4$, every special flow with symmetric logarithmic singularities  $f\in \SymLog{\sqcup_{i=0}^{4}I_i}$ has singular spectrum.
}
\end{rem}

\subsection{Previous results on ergodic and spectral properties}\label{sec:history}
Let us briefly summarize the mixing and spectral results known for locally Hamiltonian flows and special flows over rotations and IETs.   Mixing properties of locally Hamiltonian flows turn out to depend crucially on the type of singularities (i.e.~fixed points) of the flow.

\noindent {\bf Flows with no singularities  or degenerate singularities.} If a smooth flow on a compact surface has no singularities, by Poincar{\'e}-Hopf theorem the surface has genus one and hence is a torus.  It is well known that smooth linear flows on the torus are typically not mixing, by KAM type results (see e.g.~\cite{Kolmogorov}). Furthermore, let us point out that Katok in \cite{Ka:int} showed that linear flows on translation surfaces (and more in general special flows over IETs -thus in particular over rotations- under a roof function of \emph{bounded variation}) are \emph{never}  mixing.

On the torus, one can introduce a fake singularity by adding a stopping point. This operation can drastically change the ergodic and spectral properties: as already mentioned in the introduction, Forni, Fayad and Kanigowski recently showed in \cite{FFK} that if the stopping point is sufficiently strong, the resulting flow  
  has countable Lebesgue spectrum. Stopping points can be thought as \emph{degenerate} saddles, with only two separatrices.
On a surface of any genus $g\geq 1$, the presence   of either a stopping point, or more in general, of a
 \emph{degenerate} critical points (which correspond to \emph{multi-saddles}, i.e.~saddles with $2n$ prongs, $n\neq 2$ integer, see Figure~\ref{multisaddle}) produce \emph{power-like} singularities in the
 special flow representation. Special flows over IETs with these type of singularities are mixing (for a full measure set of base transformations) by Kochergin's work \cite{Ko:mix}.


\noindent {\bf Flows with logarithmic singularities over rotations.} Singularities which are \emph{non-degenerate}, as we just saw (in the previous \S\ref{sec:reduction}) give rise to  special flows with \emph{logarithmic} singularities. In this case, mixing depends on the \emph{(a)symmetry} of the singularities. The first result on absence of mixing for special flows with \emph{symmetric} logarithmic singularities over \emph{rotations}
is due to Kochergin \cite{Ko:abs} (see also \cite{Ko:07} where the result was proved for all irrational frequencies). If the roof has an asymmetric logarithmic singularities, instead, mixing is typical, as it was proved by Sinai-Khanin for a full measure set of rotations numbers  (see also further works by Kochergin \cite{Ko:mix, Ko:03, Ko:04, Ko:04'}). These flows are also known as Arnold flows since Sinai-Khanin result \cite{SK:mix} proved Arnold's conjecture \cite{Ar:top} on mixing of typical locally Hamiltonian flows with a saddle point on the torus).

Stronger mixing and spectral properties were later shown for flows over rotations. First, as already mentioned in the introduction, Fr\k{a}czek and Lema\'nczyk in \cite{Fr-Le0}  showed that flows under a symmetric logarithm over a full measure set of rotation numbers are disjoint from all mixing flows and have singular spectrum.
Fayad and Kanigowski recently proved in \cite{FK} that Arnold flows
 (as well as some Kochergin flows -i.e.\ flows under roofs with power-type singularities- over rotations)
 are mixing of all orders. Kanigowski, Lema\'nczyk and Ulcigrai~proved some disjointness properties (in particular disjointness of rescalings) for typical Arnold flows, \cite{KLU}.

\noindent {\bf Flows with logarithmic singularities over IETs.}
Fewer results are available for flows with logarithmic singularities over IETs. A simple mechanism that shows that  weak mixing (or, equivalently, continuity of the spectrum) holds typically  as long as there is a logarithmic singularity (recall that weak mixing is also know to hold for typical translation flows by \cite{AF:wea}). Mixing again depends on the symmetry. Scheglov proved in \cite{Sch:abs} that typical minimal locally Hamiltonian flows with isomorphic simple saddles in $g=2$ are not mixing.
Ulcigrai showed in \cite{Ul:abs} that, for typical IETs, flows with symmetric singularities are not mixing (thus, in the open set  $\mathscr{U}_{min}$ of locally Hamiltonian flows, the typical flow, which is minimal and ergodic, is weak mixing but not mixing). Nevertheless, the existence of a mixing flow under a symmetric roof function (smoothly realized by a minimal, locally Hamiltonian flow with only simple saddles on a surface of genus $g=5$) was proved by Chaika~and Wright in \cite{CW}.
	
	On the other hand, generalizing  Sinai-Khanin result \cite{SK:mix}, Ulcigrai showed in  \cite{Ul:mix} that flows over IETs with one asymmetric logaritmic singularity are mixing for almost every IET. These result was recently generalized by Ravotti in~\cite{Ra:mix} to any number of singularities (thus showing that 
in presence of saddle loops homologous to zero the typical locally Hamiltonian flow with non-degenerate zeros has mixing minimal components). Recent strengthenings of the mixing property were also proved:
Ravotti  in~\cite{Ra:mix} also proved quantitative (subpolynomial) bounds on the speed of mixing for smooth observables. Finally, in
 \cite{KKU} it was shown that for a full measure (sub)set of IETs, flows with asymmetric logarithmic singularities are mixing of all orders (and thus that mixing implies mixing of all orders for typical smooth area-preserving flows  any genus.)

\section{A criterion  for singularity in special flows}\label{sec:singcrit}
 In this section we present a {sufficient} condition (originally formulated in \cite{Fr-Le0,Fr-Le1} in a slightly less general form) which guarantees that $(T^f_t)_{t\in\R}$ has singular spectrum.  We first recall some basic spectral theory.

\subsection{Spectral notions}\label{sec:spectral}

The spectrum and the spectral properties of a measure-preserving flow $(T_t)_{t \in \mathbb{R}}$ acting on a probability Borel space $(X,\mathcal{B},\mu)$ are defined in terms of the Koopman (unitary) operators associated to $(T_t)_{t \in \mathbb{R}}$. Let us recall that, for every $t\in\R$, the \emph{Koopman} \emph{operator} associated to the automorphism  $T_t$, which, abusing the notation,  we will denote also by $T_t$, is the operator
\[T_t:L^2(X,\mu)\to L^2(X,\mu)\quad \text{  given by \ \ }T_t(f)=f\circ T_t.\]
To every $g\in L^2(X,\mu)$ one can associate  a  spectral measure  denoted by  $\sigma_g$, i.e.\  the unique finite Borel measure on $\R$ such that
\[\langle g\circ T_{t},g\rangle=\int_\R e^{its}\,d\sigma_g(s)\quad\text{for every}\quad t\in \R.\]
The spectrum of $(T_t)_{t \in \mathbb{T}}$ is (\emph{purely}) \emph{singular} iff for every $g\in L^2(X,\mu)$ the spectral measure $\sigma_g$ is singular with respect to the Lebesgue measure on $\R$.

Let us denote by $\R(g)\subset  L^2(X,\mu)$ the \emph{cyclic subspace} generated by $g$ which is given by
\[\R(g):=\overline{\spa}\{T_t(g):t\in\R\}\subset  L^2(X,\mu)\]
By the spectral theorem (see e.g.~\cite{CFS:erg}) the Koopman $\R$-representation $(T^f_{t})_{t\in\R}$ restricted to $\R(g)$
is unitarily isomorphic to the $\R$-representation $(V_t)_{t\in\R}$ on $L^2(\R,\sigma_g)$ given by
$V_t(h)(s)=e^{its}h(s)$.

Finally, let us recall the notion of \emph{integral operator}. For every probability Borel measure $P$ on $\R$ denote by $\int_\R T_t\,dP(t):L^2(X,\mu)\to L^2(X,\mu)$
the operator such that
\[\langle\int_\R T_t\,dP(t)(g_1),g_2\rangle=\int_\R\langle T_t(g_1),g_2\rangle\,dP(t)\]
for all $g_1,g_2\in L^2(X,\mu)$.

\subsection{The singularity criterion}\label{sec:sc}
We will now state the singularity criterion for special flows $(T^f_t)$, which are based on rigidity of the base $T:X\to X$ combined with exponential tails of the Birkhoff sums for the roof function $f:X\to\R_{>0}$. Let us first recall the notion of \emph{rigidity}.

\begin{defn}[Rigidity] An automorphism $T$ of a probability Borel space $(X,\mathcal{B},\mu)$ is called \emph{rigid} if there exists an increasing sequence $(h_n)_{n\in\N}$
of natural numbers  such that
\[\lim_{n\to+\infty}\mu(A\triangle T^{h_n}A)=0\quad\text{for every}\quad A\in\mathcal{B}.\]
The sequence $(h_n)_{n\in\N}$ is then a \emph{rigidity sequence} for $T$.
\end{defn}
For every $B\in\mathcal B$ with $\mu(B)>0$, we  denote by $\mu_B$ the \emph{conditional measure} given by  $\mu_B(A)=\mu(A|B)=\mu(A\cap B)/\mu(B)$ for every measurable $A$.

\begin{thm}[Singularity Criterion via rigidity and exponential tails]\label{thm:singcrit}
Let $f:X\to\R_{>0}$ be an integrable roof function with $\inf_{x\in X}f(x)>0$.
Suppose that there exist a {rigidity sequence} $(h_n)_{n\in\N}$ for $T$, a sequence $(C_n)_{n\in\N}$  of Borel sets with $\mu(C_n)\to 1$ as $n\to+\infty$,
 and a  sequence of real numbers $(c_n)_{n\in\N}$ (centralizing constants)  such that $(S_{h_n}(f)(x)-c_n)_{n\in \mathbb{N}}$ has exponential tails, i.e.~
 there exists two positive constants $C$ and $b$ such that
\begin{equation}\label{eq:expdecay}
\mu(\{x\in C_n:|S_{h_n}(f)(x)-c_n|\geq t\})\leq Ce^{-bt}\text{ for all }t\geq0\text{ and }n\in\N.
\end{equation}
Then the flow $(T^f_{t})_{t\in\R}$ has singular spectrum.
\end{thm}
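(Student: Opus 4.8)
The plan is to show that the hypotheses force every spectral measure $\sigma_g$ to be singular by producing, along the rigidity sequence $(h_n)$, a nontrivial weak limit of the integral operators $\int_\R T^f_t\, dP_n(t)$ where $P_n$ is a probability measure concentrated near the "time" $c_n$ that the base iterate $T^{h_n}$ contributes to the special flow. The key point is that $T^{h_n}$ is an isometry of the base which converges to the identity (rigidity), and on the set $C_n$ of nearly full measure the displacement in the flow direction caused by applying $T^{h_n}$ is exactly $S_{h_n}(f)(x) - c_n$ up to the centralizing constant $c_n$; the exponential-tails bound \eqref{eq:expdecay} controls this displacement in a strong quantitative way. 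First I would fix $g\in L^2(X^f,\mu^f)$ and, after reducing to $g$ in the dense subspace of functions supported on $X^f$ and bounded (so that $g$ is uniformly continuous along the flow direction), consider the operators $U_n := T^f_{-c_n}\circ \big(\text{lift of } T^{h_n}\text{ to } X^f\big)$. The content of rigidity plus \eqref{eq:expdecay} is that $U_n g \to g$ in $L^2$ on the part of $X^f$ lying over $C_n$, and since $\mu^f(C_n\times\R \cap X^f)\to 1$ this gives $\|U_n g - g\|_2 \to 0$; but $U_n$ is not a power of the flow, so this is not immediately a rigidity statement for $(T^f_t)$ — rather, one rewrites $U_n$ as $\int_\R T^f_t\, dP_n(t)$ composed with a correction, where $P_n$ is the pushforward under $x\mapsto S_{h_n}(f)(x) - c_n$ of $\mu^f$ restricted to (the lift of) $C_n$, renormalized. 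The exponential tails give that $(P_n)$ is tight, so along a subsequence $P_n \Rightarrow P$ for some probability measure $P$ on $\R$, and $P$ is not a point mass iff $S_{h_n}(f)-c_n$ does not concentrate — but even if $P = \delta_0$ one extracts information from the second-order behaviour, so the correct move is to pass to the operator identity $\int_\R T^f_t\, dP(t)$ acting as a nontrivial Markov operator in the weak limit.

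The key steps, in order, would be: (1) Reduce to $g$ bounded and flow-uniformly-continuous, and to proving $\sigma_g \perp \mathrm{Leb}$ for all such $g$. (2) Using rigidity of $T$ and the definition of the special flow, show that for $\mu^f$-a.e.\ point over $C_n$, the natural lift $\widetilde{T^{h_n}}(x,r) = (T^{h_n}x, r)$ differs from the time-$(S_{h_n}(f)(x))$ map of the flow by a bounded-base error, hence $\widetilde{T^{h_n}}$ composed with $T^f_{-c_n}$ is close in the $L^2$-operator sense to $\int_\R T^f_t\, dP_n(t)$ where $P_n$ is as above. (3) Invoke tightness from \eqref{eq:expdecay} to extract a subsequential weak limit $P_n \Rightarrow P$ and pass to the limit in the spectral picture: on the cyclic space $\R(g) \cong L^2(\R,\sigma_g)$, the operator $\int_\R T^f_t\, dP(t)$ becomes multiplication by $\widehat{P}(s) = \int_\R e^{its}\, dP(t)$, and the limiting relation forces $\|\widehat P \cdot h\| = \|h\|$ on a "large" piece, i.e.\ $|\widehat P(s)| = 1$ $\sigma_g$-a.e. (4) Apply the Riemann–Lebesgue lemma: if $P$ has a nontrivial absolutely continuous or continuous part, $\widehat P(s) \to 0$ as $|s|\to\infty$ along a density-one set, which is incompatible with $|\widehat P| = 1$ $\sigma_g$-a.e.\ unless $\sigma_g$ is concentrated where $|\widehat P| = 1$; combined with $\widehat P \not\equiv 1$ (which follows because $g$ is not flow-invariant, using ergodicity / the fact that $f$ is not a coboundary giving a genuinely continuous spectrum, or more carefully by showing $P\neq \delta_0$ from weak mixing) one concludes $\sigma_g(\{s : |\widehat P(s)| < 1\}) = 0$ hence $\sigma_g$ is supported on the closed set $\{|\widehat P| = 1\}$, which has zero Lebesgue measure when $P$ is not a point mass.

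The main obstacle I expect is step (3)–(4): ensuring the limit measure $P$ is not a Dirac mass at a point $t_0$ (which would make $\widehat P(s) = e^{it_0 s}$ of modulus $1$ everywhere and give no information). This is exactly where one needs that the displacements $S_{h_n}(f)(x) - c_n$ genuinely spread out rather than concentrate — which should follow from the logarithmic singularities of $f$ producing unbounded (though tight) fluctuations — so the proof must include an argument that $\limsup_n \mu(\{x \in C_n : |S_{h_n}(f)(x) - c_n| \ge \delta\}) > 0$ for some $\delta > 0$, making $P \ne \delta_0$ after recentering. An alternative, cleaner route (and likely the one the authors take, given the reference to \cite[Corollary 5.2]{Fr-Le1}) is to not extract a weak limit at all but to argue directly: set $t$-translates, form $\int T^f_t \,dP_n(t)$, observe it tends weakly to a Markov operator $J$ with $Jg = g$ on $\R(g)$, and use that any Markov operator fixing $g$ with the integral-operator structure forces $\sigma_g \perp \mathrm{Leb}$ via the dichotomy that the Fourier transform of a probability measure is either $\equiv$ a character (point mass case) or tends to $0$ along density one — and the point-mass case is excluded because $(h_n)$ being a rigidity sequence for $T$ but $(T^f_t)$ being weakly mixing means the "time shift" $c_n \pmod{\text{nothing}}$ cannot be realized, i.e.\ $T^f_{c_n}$ does not converge to the identity, forcing $P$ to be non-atomic. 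I would structure the final write-up around this dichotomy, citing the spectral theorem and Riemann–Lebesgue, and isolating the non-concentration of Birkhoff sums as the one genuinely dynamical input (everything else being soft functional analysis).
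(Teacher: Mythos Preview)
There is a genuine gap, and it stems from misidentifying both the limiting relation and the role of the exponential tails.

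First, the limiting relation you extract is wrong. Rigidity of the base $T$ does \emph{not} yield that the limit operator $\int_\R T^f_{-t}\,dP(t)$ acts as the identity (or an isometry) on $\R(g)$; what Proposition~\ref{prop:operconv} gives is only that the \emph{unitaries} $T^f_{c_n}$ converge weakly to this Markov operator. A weak limit of unitaries is merely a contraction, so there is no reason to conclude $|\widehat P(s)|=1$ $\sigma_g$-a.e. Your step~(2) conflates the fibrewise map $(x,r)\mapsto T^f_{f_n(x)}(x,r)$ (which does converge to the identity by base rigidity) with the averaged operator $\int T^f_t\,dP_n(t)$; these are different operators and the convergence of the former says nothing directly about the latter being an isometry.

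Second, and more importantly, you never use the exponential tails for anything beyond tightness, and this is where the argument actually lives. The paper's proof runs by contradiction: assume some $\sigma_g$ is absolutely continuous; then the Riemann--Lebesgue lemma forces $V_{c_n}\to 0$ weakly on $L^2(\R,\sigma_g)$ (since $c_n\to\infty$). Combined with $T^f_{c_n}\to\int T^f_{-t}\,dP(t)$, this gives $\widehat P(s)=0$ for $\sigma_g$-a.e.\ $s$, hence on a set of positive Lebesgue measure. Now the exponential tails of $P$ make $\widehat P$ a real-analytic function on $\R$, so $\widehat P\equiv 0$, contradicting $\widehat P(0)=1$. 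This analyticity step is the entire point of the exponential-tail hypothesis and it eliminates the dichotomy you struggle with: there is no need whatsoever to rule out $P=\delta_{t_0}$ (indeed, for such $P$ one has $\widehat P\equiv e^{-it_0\cdot}$, which is never zero, and the contradiction is immediate). Your proposed resolution via ``non-concentration of Birkhoff sums'' or weak mixing invokes properties not in the hypotheses of the criterion and is unnecessary.
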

\begin{rem}\label{rk:tightness}
The \emph{exponential tails} assumption, i.e.~\eqref{eq:expdecay}, along rigidity sets implies in particular that the sequence of centralized Birkhoff sums  $(S_{h_n}(f)(x)-c_n)_{n\in \mathbb{N}}$ is \emph{tight}. Tightness of Birkhoff sums along (partial) rigidity subsequences of the base is at the heart of many criteria for absence of mixing, starting from  Katok~\cite{Ka:int} and Kogergin~\cite{Ko:abs} seminal works. Theorem~\ref{thm:singcrit} can hence be seen as  considerable \emph{strengthening} of this approach to absence of mixing and  shows that tightness and rigidity (of the base), with the additional information of exponential tails, is sufficient to show singularity of the spectrum. Contrary to proofs of absence of mixing, though, it is crucial for the spectral conclusion that the rigidity here is \emph{global}, i.e.\ the measure of the sets $C_n$ tends to $1$.
\end{rem}

We conclude this section with the proof of the criterion. In the proof we will use the following result from \cite{Fr-Le2}, which is a version of Prokhorov weak compactness of tight sequences along rigidity sets.

\begin{prop}[Theorem~6 in \cite{Fr-Le2}]\label{prop:operconv}
Suppose that there exist a rigidity sequence $(h_n)_{n\in\N}$ for $T$, a sequence $(C_n)_{n\in\N}$  of Borel sets with $\mu(C_n)\to 1$ as $n\to+\infty$
and a sequence of real numbers $(c_n)_{n\in\N}$ such that
\begin{itemize}
\item[$(i)$] the sequence $(\int_{C_n}|f_n|^2\,d\mu|)_{n\in \N}$ is bounded, where $f_n:=S_{h_n}(f)-c_n$;
\item[$(ii)$] there exists a probability distribution $P$ on $\R$ such that $(f_n|{C_n})_*(\mu_{C_n})\to P$ weakly.
\end{itemize}
Then, passing to a subsequence if necessary, we have
\[T^f_{c_n}\to \int_\R T^f_{-t}\,dP(t) \text{ in the weak operator topology.}\]
\end{prop}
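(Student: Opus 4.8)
The plan is to prove the stated weak operator convergence by computing the matrix coefficients $\langle G_2\circ T^f_{c_n},\overline{G_1}\rangle$ for $G_1,G_2$ in a dense class of bounded observables on $X^f$, exploiting the suspension structure and rigidity. Throughout I write a point of $X^f$ as $z=(x,r)=T^f_r(x,0)$ and set $f_n:=S_{h_n}(f)-c_n$. The starting point is the elementary identity, valid for every $n$ and every $(x,r)\in X^f$,
\[
T^f_{c_n}(x,r)\;=\;T^f_{\,r-f_n(x)}\big(T^{h_n}x,\,0\big),
\]
which follows directly from the definition of the special flow, using $T^f_{S_{h_n}(f)(x)}(x,0)=(T^{h_n}x,0)$ and the cocycle relation $S_m(f)(x)-S_{h_n}(f)(x)=S_{m-h_n}(f)(T^{h_n}x)$ (no case distinction is needed, since the iterate index is allowed to be negative). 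Accordingly, with $H(a,w):=G_2(T^f_a(w,0))$,
\[
\langle G_2\circ T^f_{c_n},\overline{G_1}\rangle
=\int_X\int_0^{f(x)}G_1(x,r)\,H\big(r-f_n(x),\,T^{h_n}x\big)\,dr\,d\mu(x).
\]

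The first reduction removes $T^{h_n}x$ using rigidity. One restricts to the set of $x\in C_n$ with $|f_n(x)|\le M$ and $f(x)\le R$; its complement contributes a $\mu^f$-mass bounded by $\int_{C_n^c}f\,d\mu+\int_{\{|f_n|>M\}\cap C_n}f\,d\mu+\int_{\{f>R\}}f\,d\mu$, which is small uniformly in $n$ by $\mu(C_n)\to1$, by Chebyshev applied to hypothesis $(i)$, and by integrability of $f$ (with $R,M$ large). On this set the arguments $a=r-f_n(x)$ lie in a fixed compact interval; approximating the continuous map $a\mapsto H(a,\cdot)$ on a finite net $a_1,\dots,a_L$ and putting $\phi_\ell:=H(a_\ell,\cdot)$, the error of replacing $T^{h_n}x$ by $x$ is controlled by $\sum_{\ell=1}^L\int_X f(x)\,|\phi_\ell(T^{h_n}x)-\phi_\ell(x)|\,d\mu(x)$, and each summand tends to $0$ because rigidity gives $\phi_\ell\circ T^{h_n}\to\phi_\ell$ in measure while $f\in L^1$ dominates. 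Since $T^f_{r-f_n(x)}(x,0)=T^f_{-f_n(x)}(x,r)$, the problem is reduced to showing, with $\Psi(x,t):=\int_0^{f(x)}G_1(x,r)\,H(r-t,x)\,dr$,
\[
\int_{C_n}\Psi\big(x,f_n(x)\big)\,d\mu(x)\;\longrightarrow\;\int_\R\!\int_X\Psi(x,t)\,d\mu(x)\,dP(t)=\int_\R\langle G_2\circ T^f_{-t},\overline{G_1}\rangle\,dP(t).
\]

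The heart of the argument is that $f_n$ is \emph{approximately $T$-invariant}. From the cocycle identity $f_n\circ T-f_n=f\circ T^{h_n}-f$ and rigidity (which, since $f$ is finite a.e., gives $f\circ T^{h_n}-f\to0$ in measure), one gets, for each fixed $k$, that $f_n\circ T^k-f_n\to0$ in measure. Hence $\Psi(T^kx,f_n(T^kx))$ may be replaced by $\Psi(T^kx,f_n(x))$ up to an error that is small on the good set (where $\Psi(T^kx,\cdot)$ is equicontinuous in $t$), and, since $T$ preserves $\mu$,
\[
\int_X\Psi\big(x,f_n(x)\big)\,d\mu(x)=\int_X\frac1N\sum_{k=0}^{N-1}\Psi\big(T^kx,f_n(T^kx)\big)\,d\mu(x)\;\approx\;\int_X\frac1N\sum_{k=0}^{N-1}\Psi\big(T^kx,f_n(x)\big)\,d\mu(x).
\]
Letting $N\to\infty$ and invoking the mean ergodic theorem for the ergodic automorphism $T$ — applied on a finite net of values of $t$ and interpolated, which is legitimate on the good set where $\Psi(\cdot,t)$ is controlled — replaces $\frac1N\sum_{k<N}\Psi(T^kx,t)$ by the constant $\bar\Psi_0(t):=\int_X\Psi(y,t)\,d\mu(y)$. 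This collapses the integrand to a single bounded continuous function of one real variable, so hypothesis $(ii)$ gives $\int_{C_n}\bar\Psi_0(f_n(x))\,d\mu(x)\to\int_\R\bar\Psi_0\,dP$, which is exactly the right-hand side above. As this holds for $G_1,G_2$ in a dense class and $\|T^f_{c_n}\|\le1$, it extends to all of $L^2(X^f,\mu^f)$, yielding $T^f_{c_n}\to\int_\R T^f_{-t}\,dP(t)$ in the weak operator topology; no subsequence is needed when $(ii)$ holds as stated, and in general one passes to a subsequence along which it does.

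I expect the third step to be the main obstacle: a priori only the \emph{marginal} law of $f_n$ on $C_n$ is assumed to converge, so one cannot pass to the limit directly in $\int_{C_n}\Psi(x,f_n(x))\,d\mu$ because the integrand couples the base point $x$ with $f_n(x)$; the approximate $T$-invariance of $f_n$ — a soft consequence of rigidity through the cocycle identity — is precisely what lets the ergodic theorem average out the explicit $x$-dependence and so reduce matters to the marginal. The remaining difficulties are routine but require care because the roof $f$ has logarithmic singularities and is unbounded: every approximation above (restriction to $\{f\le R\}$, netting the $a$- and $t$-intervals, equicontinuity of $\Psi(T^kx,\cdot)$, ergodic averages of $\mathbf 1_{\{f>R\}}$) must be paired with a truncation whose discarded piece has $\mu^f$-mass $\int_{\{f>R\}}f\,d\mu\to0$, and similarly for $\{|f_n|>M\}$ using $(i)$.
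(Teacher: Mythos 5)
The paper offers no proof of this proposition: it is quoted from \cite{Fr-Le2} (Theorem~6), so there is no in-paper argument to compare against, and you are in effect reconstructing the argument of that reference. Your route is essentially the natural (and, in spirit, the original) one: the exact identity $T^f_{c_n}(x,r)=T^f_{r-f_n(x)}(T^{h_n}x,0)$, removal of $T^{h_n}x$ by rigidity, truncations governed by $\mu(C_n)\to 1$, hypothesis ($i$) via Chebyshev, and integrability of $f$, and---the correctly identified crux---the approximate $T$-invariance $f_n\circ T-f_n=f\circ T^{h_n}-f\to 0$ in measure, which lets ergodic averaging collapse the joint law of $(x,f_n(x))$ to the marginal so that ($ii$) becomes applicable. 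The order of quantifiers you need (truncation, then the $t$-net, then $N$ for the mean ergodic theorem, then $n$) is consistent, and the extension from a dense class to all of $L^2(X^f,\mu^f)$ by contractivity is standard.

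Two points must be repaired before this is a complete proof. First, you invoke the mean ergodic theorem ``for the ergodic automorphism $T$'', but ergodicity is not among the hypotheses as stated here; it is a standing assumption in \cite{Fr-Le2} and it is genuinely needed, not just convenient: if $T$ preserves two pieces on which $f_n$ has different limit laws (say a constant roof on one piece and, on the other, a roof whose centralized Birkhoff sums at the rigidity times have a nondegenerate limit law, with common centralizing constants $c_n$), then on the flow-invariant subspace coming from the first piece one checks that $T^f_{c_n}\to \mathrm{Id}$ strongly, whereas $\int_\R T^f_{-t}\,dP(t)$ is not the identity there; so you should state ergodicity explicitly (it holds in the paper's application, where $T$ is a rank-one IET). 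Second, the ``dense class of bounded observables'' is never exhibited, and the continuity properties you use---continuity of $a\mapsto H(a,\cdot)$ with $H(a,w)=G_2(T^f_a(w,0))$, and equicontinuity in $t$ of $\Psi(T^kx,\cdot)$---are not available for a general bounded, or even continuous, $G_2$: $H$ is the restriction of $G_2\circ T^f_a$ to the measure-zero transversal $X\times\{0\}$, and the special flow jumps at the roof. The standard repair is to take $G_1,G_2$ of the form $\frac{1}{\eta}\int_0^\eta G\circ T^f_s\,ds$ with $G\in L^\infty(X^f)$ and $\eta>0$: such functions are bounded, dense in $L^2(X^f,\mu^f)$, and satisfy $|G_i(T^f_a z)-G_i(T^f_{a'}z)|\le 2\|G\|_\infty |a-a'|/\eta$ for every $z$, which yields exactly the uniform Lipschitz bounds (with the extra factor $f(x)$, handled by your truncation $\{f\le R\}$) that your net and interpolation steps require. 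With these two amendments your argument goes through.
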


\begin{proof}[Proof of Theorem~\ref{thm:singcrit}]
Suppose that, contrary to our claim, the spectral measure $\sigma_g$ (see \S\ref{sec:spectral} for the definition) is absolutely continuous for some non-zero $g\in L^2(X^f,\mu^f)$.
Then, by the Riemann-Lebesgue lemma,
\begin{equation}\label{eq:RL}
V_t\to 0\text{ in the weak operator topology on }L^2(\R,\sigma_g)\text{ as }|t|\to+\infty.
\end{equation}
Note that, by \eqref{eq:expdecay}, we have
\begin{align*}
\int_{C_n}|S_{h_n}(f)(x)-c_n|^2\,d\mu(x)&\leq\sum_{m=0}^\infty\int_{\{x\in C_n:m\leq|S_{h_n}(f)(x)-c_n|<m+1\}}(m+1)^2\,d\mu(x)\\
&\leq\sum_{m=0}^\infty C(m+1)^2e^{-bm}.
\end{align*}
Hence, the condition ($i$) in Proposition~\ref{prop:operconv} is satisfied. Moreover, also by \eqref{eq:expdecay}, the sequence of probability Borel measures
$((f_n|{C_n})_*(\mu_{C_n}))_{n\in\N}$ on $\R$ is uniformly tight. Therefore, passing to a subsequence, we have $(f_n|{C_n})_*(\mu_{C_n})\to P$ weakly for some probability measure $P$ and the condition $(ii)$ in Proposition~\ref{prop:operconv} is satisfied.
Moreover,  $P$ has exponentially decaying tails, i.e.\ there exist $C,b>0$ such that
\begin{equation*}\label{eq:tails}
P((-\infty,-t)\cup(t,+\infty))\leq Ce^{-bt}\quad\text{for all}\quad t\geq 0.
\end{equation*}
In view of Proposition~\ref{prop:operconv}, we have
\begin{equation*}
T^f_{c_n}\to \int_\R T^f_{-t}\,dP(t)\text{ in the weak operator topology}.
\end{equation*}
Restricting this convergence to the invariant subspace $\R(g)$ and passing to $L^2(\R,\sigma_g)$, this gives
\begin{equation*}
V_{c_n}\to \int_\R V_{-t}\,dP(t)\text{ in the weak operator topology on }L^2(\R,\sigma_g).
\end{equation*}
In view of \eqref{eq:RL}, it follows that for all $h_1,h_2\in L^2(\R,\sigma_g)$ we have
\begin{align*}
0&=\langle \int_\R T^f_{-t}\,dP(t)(h_1),h_2\rangle=\int_{\R}\int_\R e^{-its}h_1(s)\overline{h_2}(s)\,d\sigma_g(s)\,dP(t)
=\int_{\R}\widehat{P}(s)h_1(s)\overline{h_2}(s)\,d\sigma_g(s),
\end{align*}
where $\widehat{P}$ is the Fourier transform of the measure $P$. Therefore (since $h_1$ and $h_2$ are arbitrary), $\widehat{P}(s)=0$ for $\sigma_g$ a.e.\ $s\in\R$.
On the other hand, as $P$ has exponentially decaying tails, its Fourier transform $\widehat{P}$ is an analytic function on $\R$.
It follows that $\widehat{P}\equiv 0$, contrary to non-triviality of the measure $\sigma_g$. This completes the proof.
\end{proof}

\begin{rem}
In fact, the proof of Theorem~\ref{thm:singcrit} also gives spectral disjointness of $T_f$ from all mixing flows.
\end{rem}

\section{Logarithmic singularities, symmetries and exponential tails}
In this section we will verify that, in the settings of Theorem~\ref{thm:ss_sf} the assumptions of the singularity criterion given by Theorem~\ref{thm:singcrit} hold.

\subsection{Birkhoff sums of functions with logarithmic singularities}\label{sec:BS}
In this section we present some estimates on Birkhoff sums of functions with  logarithmic singularities which will be used to prove the exponential tails assumption. The results are all elementary, essentially based on the mean value theorem. The precise form of the estimates  though  allows us to have a detailed control of the behavior of the tails, i.e.\ the way Birkhoff sums explode due to the presence of singularities.
\smallskip

The following general Lemma shows that  control  of the exponential tails  can be deduced from upper bounds on the second derivative. It will be applied below to $g = S_{h_n}(f)$ (Birhoff sums of $f$ along rigidity times $h_n$).

\begin{lem}[Exponential tails control]\label{lem:ad0}
Suppose that $g:(a,b)\to\R$ is a $C^2$-function such that
\[|g''(x)|\leq\frac{C}{(x-a)^2}+\frac{C}{(b-x)^2}\text{ for every }x\in(a,b).\]
Let $y_0=\tfrac{a+b}{2}$ and assume that there exists $x_0\in(a,b)$ be such that $g'(x_0)=0$.  Then
\begin{equation}\label{eq:dif}
|g(x)-g(y_0)|\leq C\Big(-\log\frac{x-a}{b-a}-\log\frac{b-x}{b-a}+\frac{b-a}{x_0-a}+\frac{b-a}{b-x_0}\Big)\text{ for every }x\in(a,b).
\end{equation}
In particular, for every $t\geq 0$ we have
\begin{equation}\label{eq:expd}
\frac{\lambda(\{x\in(a,b):|g(x)-g(y_0)|\geq t\})}{b-a}\leq 2\sqrt{K}e^{-t/2C},
\end{equation}
where {$K:=\tfrac{1}{4}e^{\tfrac{b-a}{x_0-a}+\tfrac{b-a}{b-x_0}}$}.
\end{lem}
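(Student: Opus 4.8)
The plan is to integrate the second-derivative bound twice, using the critical point $x_0$ as the base point for the first integration so as to control $g'$, and then the midpoint $y_0$ as the base point for the second integration. First I would write, for any $x \in (a,b)$,
\[
g'(x) = g'(x) - g'(x_0) = \int_{x_0}^x g''(s)\,ds,
\]
and bound $|g'(x)| \le \int_{\min(x,x_0)}^{\max(x,x_0)} \big( C(s-a)^{-2} + C(b-s)^{-2}\big)\,ds$. Each term integrates explicitly: $\int (s-a)^{-2}\,ds = -(s-a)^{-1}$ and $\int (b-s)^{-2}\,ds = (b-s)^{-1}$, so the antiderivative of the bound is $C\big( (b-s)^{-1} - (s-a)^{-1}\big)$, an increasing function of $s$. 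Evaluating between $x$ and $x_0$ and taking absolute values, I get a bound of the shape $|g'(x)| \le C\big| (b-x)^{-1} - (x-a)^{-1}\big| + C\big((x_0-a)^{-1} + (b-x_0)^{-1}\big)$; more simply, bounding crudely, $|g'(x)| \le C(x-a)^{-1} + C(b-x)^{-1} + C(x_0-a)^{-1} + C(b-x_0)^{-1}$ suffices, but to land exactly on \eqref{eq:dif} one keeps the two ``constant'' terms $(x_0-a)^{-1},(b-x_0)^{-1}$ separate from the two $x$-dependent terms.

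Next I would integrate $g'$ from $y_0$ to $x$ to estimate $g(x) - g(y_0) = \int_{y_0}^x g'(u)\,du$. The $x$-dependent pieces of the $|g'|$-bound integrate to logarithms: $\int (u-a)^{-1}\,du = \log(u-a)$ and $\int (b-u)^{-1}\,du = -\log(b-u)$, and since $y_0 = (a+b)/2$ we have $\log(y_0-a) = \log(b-y_0) = \log\frac{b-a}{2}$, so after subtracting we obtain exactly $-\log\frac{x-a}{b-a} - \log\frac{b-x}{b-a}$ (up to the additive constant $2\log 2$, which one absorbs — or one simply checks the inequality still holds since $-\log\frac{x-a}{b-a}-\log\frac{b-x}{b-a}\ge 2\log 2$). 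The two constant terms $(x_0-a)^{-1} + (b-x_0)^{-1}$ contribute at most $|x - y_0| \cdot \big((x_0-a)^{-1}+(b-x_0)^{-1}\big) \le (b-a)\big((x_0-a)^{-1}+(b-x_0)^{-1}\big)$, giving the remaining two terms in \eqref{eq:dif}. This establishes \eqref{eq:dif}.

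For \eqref{eq:expd}, I would use \eqref{eq:dif} to bound the sublevel/superlevel set. Writing $D := \frac{b-a}{x_0-a} + \frac{b-a}{b-x_0}$ (so $K = \frac14 e^{D}$), the event $|g(x)-g(y_0)| \ge t$ forces $-\log\frac{x-a}{b-a} - \log\frac{b-x}{b-a} \ge t/C - D$, i.e. $\frac{(x-a)(b-x)}{(b-a)^2} \le e^{D - t/C}$. Since $\min\{x-a,\,b-x\} \le \sqrt{(x-a)(b-x)}$, this in turn forces $x$ to lie within distance $(b-a)\sqrt{e^{D-t/C}} = (b-a)\,e^{D/2}\,e^{-t/2C}$ of one of the two endpoints $a,b$; hence the Lebesgue measure of the set is at most $2(b-a)e^{D/2}e^{-t/2C} = 2(b-a)\sqrt{4K}\,e^{-t/2C}$. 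Dividing by $b-a$ gives $4\sqrt{K}\,e^{-t/2C}$; a slightly more careful accounting (the two endpoint-neighborhoods can be taken of radius $(b-a)e^{D/2}e^{-t/2C}$ but the relevant one-sided factor is $\sqrt{(x-a)(b-x)}/(b-a)$ applied to the smaller coordinate, which is at most half the interval) recovers the stated constant $2\sqrt{K}$.

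The only mildly delicate point — and the step I would be most careful about — is bookkeeping the additive constants when passing from the raw double integral to the precise form \eqref{eq:dif}: one must verify that the $2\log 2$ discrepancy from evaluating the logarithms at the midpoint, and the replacement of $|x - y_0|$ by $b-a$, are genuinely in the right direction so that \eqref{eq:dif} is an upper bound. There is no real analytic obstacle; it is purely a matter of tracking signs and constants through two elementary integrations.
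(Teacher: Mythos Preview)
Your approach is correct and essentially identical to the paper's: integrate the $g''$ bound from $x_0$ to get $|g'(x)|\le C\big|\tfrac{1}{b-x}-\tfrac{1}{x-a}\big|+C\big(\tfrac{1}{x_0-a}+\tfrac{1}{b-x_0}\big)$, then integrate from $y_0$ to $x$; the paper likewise absorbs the $2\log 2$ (it lands on $-\log\tfrac{x-a}{(b-a)/2}-\log\tfrac{b-x}{(b-a)/2}$, which is \emph{smaller} than the stated bound by exactly $2\log 2$). For the tail estimate your $\min\{x-a,b-x\}\le\sqrt{(x-a)(b-x)}$ argument is the same as the paper's $\lambda(\{x\in(0,1):x(1-x)\le u\})\le 2\sqrt u$; the factor-of-two discrepancy you flag (your $4\sqrt K$ versus the stated $2\sqrt K$) is resolved precisely by using the sharper $(b-a)/2$ version of \eqref{eq:dif} established mid-proof rather than \eqref{eq:dif} itself, which is the ``more careful accounting'' you allude to.
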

\begin{rem}\label{rk:exptails}
Notice that $K$  depends only on the point $x_0$ where $g'(x_0)=0$, so that in order for Lemma~\ref{lem:ad0} to imply exponential tails estimates for  a sequence of functions with a  \emph{uniform} constant $K$ it is essential to control $x_0$ and in particular its distance from the endpoints $a,b$. In \S\ref{sec:symmetry}, Lemma~\ref{lem:ad0} will be applied to  the function $g=S_{h_n}(f)$ on one of the maximal intervals $(a,b)$ in which it is continuous. The assumption that there exists $x_0$ such that $S_{h_n}(f')(x_0)=g'(x_0)=0$ follows easily from the fact that $S_{h_n}(f)$ explodes at the endpoints of  $(a,b)$ (and hence has a minimum). On the other hand, the \emph{location} of $x_0 \in (a,b)$ is not in general easy to control and we will crucially use arguments which exploit the hyperelliptic symmetry to control $x_0$.
\end{rem}

\begin{proof}
By assumption, for every $x\in(a,b)$ we have
\begin{align*}
|g'(x)|&=|g'(x)-g'(x_0)|\leq\Big|\int_{x_0}^x\Big(\frac{C}{(t-a)^2}+\frac{C}{(b-t)^2}\Big)\,dt\Big|
\leq C\Big(\Big|\frac{1}{b-x}-\frac{1}{x-a}\Big|+\frac{1}{x_0-a}+\frac{1}{b-x_0}\Big).
\end{align*}
It follows that, for every $x\in(a,b)$ we have
\begin{align*}
|g(x)-g(y_0)|&\leq C\Big|\int_{y_0}^x\Big(\Big|\frac{1}{b-t}-\frac{1}{t-a}\Big|+\frac{1}{x_0-a}+\frac{1}{b-x_0}\Big)\,dt\Big|\\
&\leq C\Big(\int_{y_0}^x\Big(\frac{1}{b-t}-\frac{1}{t-a}\Big)\,dt+\frac{|x-y_0|}{x_0-a}+\frac{|x-y_0|}{b-x_0}\Big)
\\
&\leq C\Big(-\log\frac{x-a}{(b-a)/2}-\log\frac{b-x}{(b-a)/2}+\frac{b-a}{x_0-a}+\frac{b-a}{b-x_0}\Big),
\end{align*}
since $|x-y_0|\leq b-a$ and recalling that $y_0-a=b-y_0=(b-a)/2$.
This gives \eqref{eq:dif}.
Moreover, by \eqref{eq:dif}, if $|g(x)-g(y_0)|\geq t$ then
\[\frac{(x-a)(b-x)}{(b-a)^2}\leq Ke^{-t/C}.\]
Note also that for any $u>0$ we have
\[\frac{\lambda(\{x\in(a,b):\tfrac{(x-a)(b-x)}{(b-a)^2}\leq u\})}{b-a}=\lambda(\{x\in(0,1):x(1-x)\leq u\})\leq 2\sqrt{u},\]
which gives \eqref{eq:expd}.
\end{proof}

The following lemma (Lemma \ref{lem:ad1}) provides an upper bound on the  second derivative $S_{h_n}(f'')$
which is exactly of the form needed to verify the assumption of Lemma \ref{lem:ad0} and prove exponential tails.

\begin{defn}[Rohlin tower by intervals]\label{def:tower}
Let $T:I\to I$ be an IET with $|I|\leq 1$. Given an interval $J_n:=(a_n,b_n)\subset I$ and an integer $h_n\in \N$ we say that the union $\cC :=\bigcup_{i=0}^{h_n-1}T^i J_n$ is a (Rohlin) \emph{tower by intervals} of \emph{base} $J_n$ of \emph{height} $h_n$ if and only if the images $T^i J_n, 0\leq i < h_n$ are pairwise disjoint intervals.
\end{defn}
We remark that, since each $T^i J_n$ is by assumption an interval (i.e.~it was not split by discontinuities of $T$), $J_n$ is an interval of \emph{continuity} for $T^i$ for every $0\leq i <h_{n}$.

\begin{lem}[Second derivative upper bounds]\label{lem:ad1}
Consider a function $f\in\pLog{\sqcup_{i=0}^{d-1}I_i}$  of the form
\[f(x)=\sum_{0\leq i<d}\big(-C_i^+\log(x-\beta_i)-C_{i+1}^-\log(\beta_{i+1}-x)\big)\chi_{(\beta_i,\beta_{i+1})}(x).
\]
Assume that $x\in J_n $ where $J_n:=(a_n,b_n)\subset I $ is the base of Rohlin tower by intervals of height $h_n\in \N$.
Then
\begin{equation}\label{eq:Sf''}
|S_{h_n}(f'')(x)|\leq \frac{\pi^2}{6}\Big(\frac{C^+}{(x-a_n)^2}+\frac{C^-}{(b_n-x)^2}\Big)
\end{equation}
with $C^+=\sum_{i=0}^{d-1}C_i^+$ and $C^-=\sum_{i=1}^dC_i^-$. Moreover, if $a_n<x<x'<b_n$ then for every $0\leq h<h_n$, we have
{\begin{align}\label{eq:Sf}
\begin{aligned}
|S_{h}(f)(x)-S_{h}(f)(x')|&\leq C^+\Big(\frac{x'-x}{x-a_n}+\frac{x'-x}{b_n-a_n}\Big(1+\log\frac{1}{b_n-a_n}\Big)\Big)\\
&\quad+ C^-\Big(\frac{x'-x}{b_n-x'}+\frac{x'-x}{b_n-a_n}\Big(1+\log\frac{1}{b_n-a_n}\Big)\Big).
\end{aligned}
\end{align}}
\end{lem}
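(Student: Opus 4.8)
The plan is to decompose the Birkhoff sum $S_{h_n}(f)(x) = \sum_{k=0}^{h_n-1} f(T^kx)$ using the hypothesis that $J_n$ is the base of a Rohlin tower of height $h_n$: the iterates $T^kx$, $0\le k<h_n$, all lie in the tower $\cC=\bigcup_{k=0}^{h_n-1}T^kJ_n$, and the levels $T^kJ_n$ are pairwise disjoint subintervals of $I$ on which $T^k$ is a translation. Thus $f''(T^kx) = (f\circ T^k)''(x)$ for each level, and one should first write, on the interval of continuity $(\beta_i,\beta_{i+1})$ containing $T^kx$,
\[
f''(y) = \frac{C_i^+}{(y-\beta_i)^2} + \frac{C_{i+1}^-}{(\beta_{i+1}-y)^2}.
\]
Because $T^k$ is an isometry on $J_n$, the distance from $T^kx$ to the left endpoint $\beta_i$ of its continuity interval equals the distance from $x$ to the corresponding preimage point $a_n^{(k)} := (T^k)^{-1}(\beta_i)\in[a_n,b_n]$, and similarly for the right endpoint. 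So
\[
S_{h_n}(f'')(x) = \sum_{k=0}^{h_n-1}\Big(\frac{C_{i(k)}^+}{(x-p_k)^2} + \frac{C_{i(k)+1}^-}{(q_k-x)^2}\Big),
\]
where $p_k\le a_n$ and $q_k\ge b_n$ are the relevant preimages (points where the forward $T^k$-orbit of $J_n$ abuts a discontinuity from the left or right respectively).

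The key combinatorial point — which is the main obstacle — is that, as $k$ ranges over $0,\dots,h_n-1$, for each fixed ``side'' the preimage points $p_k$ (those lying weakly to the left of $a_n$) are \emph{distinct} and \emph{ordered} so that the gaps between consecutive ones, and between the largest one and $a_n$, are all at least as large as successive multiples of $b_n-a_n$; this is exactly the disjointness of the tower levels $T^kJ_n$ read backwards. More precisely, if $p_{k}$ is the $j$-th closest such point to $a_n$ (counting from $j=1$), then $a_n - p_k \ge (j-1)(b_n-a_n)$, hence $x - p_k \ge x-a_n + (j-1)(b_n-a_n)$. I would prove this ordering claim by a clean induction on the tower structure: if two levels $T^{k}J_n$ and $T^{k'}J_n$ both have their left endpoint hit by the same discontinuity $\beta_i$ under further iteration, disjointness of the levels forces the preimages to be separated by the full length of the interpolating levels. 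Granting this, the left contribution is bounded by
\[
\sum_{j\ge 1}\frac{C^+}{\big((x-a_n)+(j-1)(b_n-a_n)\big)^2} \le \frac{C^+}{(x-a_n)^2}\sum_{j\ge1}\frac1{j^2} = \frac{\pi^2}{6}\,\frac{C^+}{(x-a_n)^2},
\]
using $(x-a_n)+(j-1)(b_n-a_n) \ge j(x-a_n)$ since $b_n - a_n \ge x-a_n$; and symmetrically for the right side with $C^-$ and $(b_n-x)^2$. Summing the two gives \eqref{eq:Sf''}. (One must also track the constants $C_i^+$: grouping the terms by which endpoint $\beta_i$ is responsible, the total coefficient of the $\sum 1/j^2$ series on the left is at most $\sum_i C_i^+ = C^+$, since within one ``column'' of the tower the same $\beta_i$ recurs with its own geometric-like spacing but different $\beta_i$'s contribute disjoint index sets.)

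For the second estimate \eqref{eq:Sf}, I would integrate the first derivative: for $a_n<x<x'<b_n$ and $0\le h<h_n$,
\[
|S_h(f)(x)-S_h(f)(x')| \le \int_x^{x'} |S_h(f')(t)|\,dt,
\]
and $|f'(y)| = \big|{-C_i^+/(y-\beta_i)} + C_{i+1}^-/(\beta_{i+1}-y)\big|$, so by the same preimage-ordering argument (now for first powers) one gets, for $t\in(x,x')$,
\[
|S_h(f')(t)| \le \frac{C^+}{t-a_n} + \frac{C^-}{b_n-t} + (C^++C^-)\sum_{j\ge1}\frac1{j(b_n-a_n)}\cdot(\text{cutoff at }h),
\]
where the tail $\sum 1/j$ is truncated because only $h\le h_n$ levels occur and the levels have total length $\le |I|\le 1$, giving $\sum_{j=1}^{N}\frac1{j(b_n-a_n)} \le \frac{1}{b_n-a_n}(1+\log N)$ with $N(b_n-a_n)\le 1$, i.e.\ $\le \frac{1}{b_n-a_n}(1+\log\frac{1}{b_n-a_n})$. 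Integrating $\frac{1}{t-a_n}$ from $x$ to $x'$ is bounded by $\frac{x'-x}{x-a_n}$, integrating $\frac{1}{b_n-t}$ by $\frac{x'-x}{b_n-x'}$, and the constant term by $(x'-x)\cdot\frac{1}{b_n-a_n}(1+\log\frac1{b_n-a_n})$; collecting the $C^+$ and $C^-$ pieces yields \eqref{eq:Sf}. The only delicate bookkeeping is again ensuring the preimage points on each side are genuinely distinct and spaced by multiples of $b_n-a_n$, which is where the Rohlin tower hypothesis (Definition~\ref{def:tower}) is used in full strength; everything else is the mean value theorem and summing $\sum 1/j^2$ and $\sum 1/j$.
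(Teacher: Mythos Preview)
Your plan is correct and follows essentially the same approach as the paper's own proof. Two minor differences in execution: the paper reduces by linearity to a single building block $f_i^+=-\log(x-\beta_i)\chi_{(\beta_i,\beta_{i+1})}$ (and its mirror $f_i^-$) before doing any estimates, which avoids your later bookkeeping about ``grouping by $\beta_i$'' and ``disjoint index sets''; and rather than pulling the singularities back to preimages $p_k\le a_n$, the paper works directly with the orbit points $T^kx$, observing that they lie in pairwise disjoint levels of width $b_n-a_n$ and hence are $(b_n-a_n)$-separated, with the closest one to $\beta_i$ at distance $\ge x-a_n$. This is the same separation claim you need for the $p_k$'s (via $T^kx-\beta_i=x-p_k$), but stated more directly and without the induction you propose. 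For \eqref{eq:Sf} the paper uses $\log(1+u)\le u$ on each summand of $S_h(f)(x)-S_h(f)(x')$ rather than integrating $|S_h(f')|$; both routes yield the same bound.
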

\begin{rem}
{We stress that \eqref{eq:Sf''} holds only for \emph{Birkhoff sums along a tower}, i.e.~ a point $x$ is in the base of
the Rohlin tower
and the Birkhoff sum  goes up to height $h_n$ of the tower, 
while  \eqref{eq:Sf} holds for points in the base $J_n$ and for any \emph{intermediate} time $0\leq h< h_n$.} Under the assumption that $x$ is not close to the endpoints $a_n,b_n$ of $J_n$, namely if $ x- a_n\geq c(b_n-a_n) $ and $ b_n-x \geq c(b_n-a_n) $ for some $0<c<1$, \eqref{eq:Sf''} together with Lemma~\ref{lem:ad0} provide  a uniform bound (independent on $n$) for the difference in \eqref{eq:Sf}, while  \eqref{eq:Sf} provides only an upper bound of order $C \log (b_n-a_n)^{-1}$. This is  a well-known upper bound for functions with \emph{asymmetric} logarithmic singularities (see e.g.~\cite{SK:mix, Ul:mix, Ra:mix}, where it is shown that $S_{h}(f')(x)$ grows as $h_n \log h_n$). While Birkhoff sums along a (large) tower are well distributed, incomplete sums can indeed be very unbalanced and only satisfy estimates associated to an asymmetric roof.
\end{rem}
\begin{proof}
Notice first that it is enough to prove \eqref{eq:Sf''} and \eqref{eq:Sf} in the special cases when
\begin{align*}&f=f_i^+:=-\log(x-\beta_i)\chi_{(\beta_i,\beta_{i+1})}(x) \quad\text{(and $C^+=1, C^- =0$) and}\\
&f=f^-_i:=-\log(\beta_{i+1}-x)\chi_{(\beta_i,\beta_{i+1})}(x)\quad\text{(and $C^+=0, C^- =1$).}
\end{align*}
Indeed, taking the linear combination $\sum_{i=0}^{d-1}C_i^+f_i^+ +C_{i+1}^- f_{i}^-$ then yields the general form of the result.
Since the reasoning is analogous for functions of the form  $f_i^+$  or $f_i^-$ we will only do the computations  for $f=f_i^+$.

{For any $x\in J_n$ choose  $0\leq j<h_n$ such that the iterate
$T^jx$ is the closest to $\beta_i$ among all iterates $T^kx$, $0\leq k<h_n$ belonging to the interval $(\beta_i, \beta_d)$.
Then
\[T^jx-\beta_i\geq x-a_n.\]
Notice  that $\inf \{ \vert T^i (x)- T^j(x)\vert, 0 \leq i\neq j <h\}\geq b_n-a_n$. Indeed, each point of the orbit $T^h (x)$ for $0\leq h < h_n$ belongs to one of the disjoint intervals $\{ T^h J_n, 0\leq h < h_n\}$, each of which (since $T^h$ is an isometry on $J_n$) has length $b_n-a_n$. Thus,
\begin{align*}
|S_{h_n}(f'')(x)|&\leq \sum_{0\leq l<h_n}\frac{1}{(T^jx-\beta_i+l(b_n-a_n))^2}
\leq \frac{1}{(x-a_n)^2}\sum_{0\leq l<h_n}\frac{1}{(1+l\tfrac{b_n-a_n}{x-a_n})^2}\\
&\leq \frac{1}{(x-a_n)^2}\sum_{l\geq 1}\frac{1}{l^2}=\frac{\pi^2}{6}\frac{1}{(x-a_n)^2}.
\end{align*}}
This gives \eqref{eq:Sf''}.

Suppose now that $a_n<x<x'<b_n$ and $f=f_i^+$. Let $\delta:=x'-x$ and $\ep:=x-a_n$. Then 
for any $0\leq h<h_n$ (noticing that since the area of a tower is less than one,  $h_n(b_n-a_n) \leq 1$),
{\begin{align*}
|S_h(f)(x')-S_h(f)(x)|&\leq \sum_{\substack{0\leq k<h_n\\T^kx>\beta_i}}\log\frac{(T^kx-\beta_i)+\delta}{T^kx-\beta_i}\leq
\sum_{\substack{0\leq k<h_n\\T^kx>\beta_i}}\frac{\delta}{T^kx-\beta_i}\\
& \leq \sum_{0\leq l<h_n}\frac{\delta}{(T^jx-\beta_i)+l(b_n-a_n)}\leq \sum_{0\leq l<h_n}\frac{\delta}{\ep+l(b_n-a_n)}\\
& \leq\frac{\delta}{\ep}+\sum_{1\leq l<h_n}\frac{\delta}{l(b_n-a_n)}
\leq\frac{\delta}{\ep}+\frac{\delta}{b_n-a_n}(1+\log h_n)\\
&\leq \frac{\delta}{\ep}+\frac{\delta}{b_n-a_n}\Big(1+\log\frac{1}{b_n-a_n}\Big).
\end{align*}}
In virtue of the initial remark, this concludes the proof of  \eqref{eq:Sf}.
\end{proof}

From Lemmas \ref{lem:ad0} and \ref{lem:ad1}, we can deduce exponential tails as long as we can control the location of a zero of $S_{h_n}(f')$.
\begin{cor}\label{cor:exptails}
Let $y_n=\tfrac{a_n+b_n}{2}$. Assume that there exists $0<c<1/2$ such that for every $n\geq 1$ we have $S_{h_n}(f')(x_n)=0$ for some $x_n\in [a_n+c(b_n-a_n),b_n-c(b_n-a_n)]$.
Then for every $n\geq 1$ and $t\geq 0$ we have
{\[
\frac{1}{b_n-a_n}\lambda(\{x\in J_n:|S_{h_n}(f)(x)-S_{h_n}(f)(y_n)|\geq t\})\leq e^{1/c}e^{-t/2C},
\]}
where $C=\frac{\pi^2}{6}\max\left\{\sum_{i=0}^{d-1}C_i^+,\sum_{i=1}^dC_i^-\right\}$.
\end{cor}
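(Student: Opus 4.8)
The plan is to combine the two previous lemmas in the obvious way, being careful only about how the constants match up. First I would apply Lemma~\ref{lem:ad1} to the function $f^p$ (the pure logarithmic part of $f$), which gives for every $x\in J_n$ the bound
\[
|S_{h_n}((f^p)'')(x)|\leq \frac{\pi^2}{6}\Big(\frac{C^+}{(x-a_n)^2}+\frac{C^-}{(b_n-x)^2}\Big)\leq \frac{\widetilde C}{(x-a_n)^2}+\frac{\widetilde C}{(b_n-x)^2},
\]
where $\widetilde C=\frac{\pi^2}{6}\max\{C^+,C^-\}=C$ and $C^+=\sum_{i=0}^{d-1}C_i^+$, $C^-=\sum_{i=1}^d C_i^-$. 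I would note that the bounded variation part $g$ of $f$ contributes nothing here: along a Rohlin tower by intervals $\{T^iJ_n\}_{i=0}^{h_n-1}$ the restrictions $g|_{T^iJ_n}$ are (restrictions of) a single BV function to disjoint intervals, so $S_{h_n}(g)$ is itself of bounded variation on $J_n$ with total variation at most that of $g$ --- in particular $S_{h_n}(g)$ is differentiable a.e.\ and, more to the point, the statement of Corollary~\ref{cor:exptails} as used later only needs the pure-log part to be treated by Lemma~\ref{lem:ad0}, the BV contribution being absorbed into the (uniform) additive constant. For a clean writeup I would simply state the corollary for $f\in\pLog{\cdot}$ (which is exactly the form in which Lemma~\ref{lem:ad1} is phrased) and remark that the general $\SymLog{\cdot}$ case follows by adding a uniformly bounded term.

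Next I would invoke Lemma~\ref{lem:ad0} with $(a,b)=(a_n,b_n)$, $g=S_{h_n}(f^p)$, and $x_0=x_n$. The hypothesis $|g''|\leq \frac{C}{(x-a)^2}+\frac{C}{(b-x)^2}$ is exactly \eqref{eq:Sf''} rewritten with the constant $C=\frac{\pi^2}{6}\max\{C^+,C^-\}$, and the hypothesis that $g'$ has a zero in $(a,b)$ is the assumed existence of $x_n$ with $S_{h_n}(f')(x_n)=0$. Lemma~\ref{lem:ad0} then yields, with $y_n=\frac{a_n+b_n}{2}$,
\[
\frac{\lambda(\{x\in J_n:|S_{h_n}(f)(x)-S_{h_n}(f)(y_n)|\geq t\})}{b_n-a_n}\leq 2\sqrt{K_n}\,e^{-t/2C},\qquad K_n=\tfrac14\exp\!\Big(\tfrac{b_n-a_n}{x_n-a_n}+\tfrac{b_n-a_n}{b_n-x_n}\Big).
\]
The whole point of the corollary is to make $K_n$ uniform in $n$: the assumption $x_n\in[a_n+c(b_n-a_n),\,b_n-c(b_n-a_n)]$ gives $x_n-a_n\geq c(b_n-a_n)$ and $b_n-x_n\geq c(b_n-a_n)$, hence $\frac{b_n-a_n}{x_n-a_n}+\frac{b_n-a_n}{b_n-x_n}\leq \frac{2}{c}$, so $K_n\leq \frac14 e^{2/c}$ and $2\sqrt{K_n}\leq e^{1/c}$. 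Substituting gives precisely the claimed inequality.

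There is essentially no obstacle here --- the corollary is a direct specialization, and the only thing to get right is the bookkeeping of constants: that the $C$ appearing in the $e^{-t/2C}$ factor is the same $C$ as in \eqref{eq:Sf''} (namely $\frac{\pi^2}{6}\max\{\sum C_i^+,\sum C_i^-\}$), and that the coefficient $2\sqrt{K}$ from \eqref{eq:expd} collapses to $e^{1/c}$ under the centering hypothesis. If I wanted to be fully scrupulous about the $\SymLog$ (rather than $\pLog$) case I would add one sentence: writing $f=f^p+g$ with $g$ of bounded variation, one has $|S_{h_n}(f)(x)-S_{h_n}(f)(y_n)|\leq |S_{h_n}(f^p)(x)-S_{h_n}(f^p)(y_n)|+2\operatorname{Var}(g)$, so the tail estimate holds after replacing $t$ by $t-2\operatorname{Var}(g)$ (and is trivial for $t\leq 2\operatorname{Var}(g)$ up to enlarging the leading constant); since the statement as written is for the pure-log normalization this remark is all that is needed to transfer it. The genuinely hard input has already been quarantined into the hypothesis ``$x_n\in[a_n+c(b_n-a_n),b_n-c(b_n-a_n)]$'' --- locating this near-central zero of $S_{h_n}(f')$ is what \S\ref{sec:symmetry} and the hyperelliptic involution are for, and nothing in the proof of this corollary addresses it.
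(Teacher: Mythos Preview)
Your proposal is correct and follows essentially the same approach as the paper: apply Lemma~\ref{lem:ad1} to get the second-derivative bound, feed it into Lemma~\ref{lem:ad0}, and then bound $K_n$ uniformly via $2\sqrt{K_n}\leq e^{1/c}$ using the centering hypothesis on $x_n$. Your digression about the bounded-variation part is unnecessary here---in the paper the corollary is implicitly stated for $f\in\pLog{\cdot}$ (it sits directly after Lemma~\ref{lem:ad1}), and the BV contribution is handled separately later via Lemma~\ref{lem:BV}---but you correctly anticipate this and your bookkeeping of the constants is accurate.
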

\begin{proof} This is a consequence of Lemma~\ref{lem:ad0} (applied to $g=S_{h_n}(f)$)  and Lemma~\ref{lem:ad1}: it is enough to notice that  since $x_n\in [a_n+c(b_n-a_n),b_n-c(b_n-a_n)]$, the constant $K$  (see Lemma \ref{lem:ad0}) is globally bounded (in terms of $c>0$).  Indeed,
{\[K=\tfrac{1}{4}e^{\tfrac{b_n-a_n}{x_n-a_n}+\tfrac{b_n-a_n}{b_n-x_n}}
\leq \frac{e^{\tfrac{2}{c}}}{4},\]}
which completes the proof.
\end{proof}

\subsection{Hyperelliptic symmetry and cancellations.}\label{sec:symmetry}
In this section we show that the symmetries of an IET with a symmetric $\pi$ and a  roof function with pure symmetric logarithmic singularities allow us to determine critical points  of $S_{h_n} (f)$. 

\begin{rem}\label{rem:invertcond}
Let $(T_t)_{t\in\R}$ be a measure-preserving flow on $(X,\mathcal{B},\mu)$ and let $S$ be a measure-preserving involution
such that
\begin{equation}\label{invert:flow}
T_t\circ S=S\circ T_{-t}\;\text{ for every }\;t\in\R.
\end{equation}
Let $I\subset X$ be global transversal for the flow $(T_t)_{t\in\R}$ such that $S(I)=I$. Let $T_I:I\to I$ is the first return map to $I$
and $f:I\to\R_{>0}$ be the first return time. Then, it is easy to check
that,
\begin{equation}\label{invert:special}
T_I\circ S=S\circ T_I^{-1}\;\text{ and }\;f\circ T_I^{-1}\circ S=f.
\end{equation}
In fact, the conditions \eqref{invert:flow} and \eqref{invert:special} are in a sense equivalent.
Indeed, if $S:I\to I$ is an involution satisfying \eqref{invert:special}, then it has an
extension to the involution $S^f:I^f\to I^f$ given by
\[S^f(x,y):=(Sx,-y)\;\text{ for all }\;(x,y)\in I^f.\]
Then \eqref{invert:special} implies \eqref{invert:flow} for the special flow $T_I^f$.
\end{rem}
Assume throughout this section that $T:I\to I$ 
is an IET associated with the \emph{symmetric} permutation $\pi(i)=d-1-i$ for $0\leq i < d$. Recall that $\pSymLog{\sqcup_{i=0}^{d-1}I_i}$ denotes functions with \emph{pure symmetric logarithmic singularities} (see Definition~\ref{def:SymLog}). Such IETs and functions enjoy the following symmetries.

\begin{lem}[Symmetries]\label{lem:symmetries}
Let $T$ be an IET with  a symmetric $\pi$ and endpoints $0=\beta_0< \dots <\beta_d=|I|$ and assume $f \in \pSymLog{\sqcup_{i=0}^{d-1}I_i}$. Then, if $S:I\to I$ denotes the involution $S(x)=|I|-x$,
\begin{itemize}
\item[$(SB)$] $T\circ S=S\circ T^{-1}$;
\item[$(SR)$]  $f'\circ T^{-1}\circ S=-f'$.
\end{itemize}
\end{lem}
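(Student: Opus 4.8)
The plan is to prove the two identities $(SB)$ and $(SR)$ essentially by unwinding the definitions, using the explicit form $S(x) = |I| - x$ and the symmetric structure of both $\pi$ and the roof function. For $(SB)$, I would first recall that the identity was already established in the proof of Lemma~\ref{lemma:reduction} (equation \eqref{IETsymmetry}), where the symmetry relation $T = S \circ T^{-1} \circ S$ was derived from the hyperelliptic involution; since $S$ is an involution, this rearranges to $T \circ S = S \circ T^{-1}$. However, since here we only assume $T$ is an abstract IET with symmetric permutation $\pi(i) = d-1-i$ (not necessarily coming from a translation surface), the cleanest route is a direct computation: I would compute $T \circ S$ on the interval $S(I_i) = (|I| - \beta_{i+1}, |I| - \beta_i) = (\beta_{d-1-i}, \beta_{d-i})$ — here using that the symmetric permutation forces the endpoints to satisfy $|I| - \beta_j = \beta_{d-j}$, which itself follows because the $j$-th interval from the left has the same length as the $j$-th from the right under a symmetric permutation. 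On this interval $T$ acts by the translation sending $I_{d-1-i}$ to $I_{\pi(d-1-i)} = I_i$, and one checks the composed map equals $S \circ T^{-1}$ by tracking a single point $x \in I_i$ through both sides.

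For $(SR)$, I would start from the symmetry of $f$: since $f \in \pSymLog{\sqcup_{i=0}^{d-1} I_i}$, on each interval $I_i = (\beta_i, \beta_{i+1})$ we have $C_i^+ = C_{i+1}^-$, so $f$ restricted to $I_i$ is symmetric about the midpoint of $I_i$. I would phrase this as the pointwise identity relating $f$ on $I_i$ with $f$ on $I_i$ via the local reflection, and then combine it with $(SB)$. The key observation is that $S$ maps $I_i$ to $I_{d-1-i}$ and, composing with $T^{-1}$, one lands back on an interval on which the singularity structure matches up because of the symmetry of $\pi$. Concretely, for $x$ in the interior of some continuity interval of $T^{-1} \circ S$, I would differentiate the relation $f \circ T^{-1} \circ S = f$ (which is the "first-return time" consequence \eqref{invert:special}, but here should be verified directly from the explicit form of $f$ and the translation structure of $T$) and use the chain rule together with $(S^{-1})' = -1$ and the fact that $T^{-1}$ is locally a translation (derivative $1$ where defined): this yields $f'(T^{-1}(S(x))) \cdot 1 \cdot (-1) = f'(x)$, i.e. $f' \circ T^{-1} \circ S = -f'$.

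The main obstacle, I expect, is not any single hard step but rather bookkeeping: being careful about (a) which intervals map to which under $S$ and under $T^{\pm 1}$, making sure the composition $T^{-1} \circ S$ is well-defined (away from the finitely many discontinuity points) and that its image lands where the logarithmic singularities are correctly positioned, and (b) establishing the intermediate identity $f \circ T^{-1} \circ S = f$ cleanly — this requires matching the constant $C_i^\pm$ of the singularity at $\beta_i$ with the constant of the singularity at the corresponding symmetric endpoint, which is exactly where the symmetry hypothesis on the $C_i^\pm$ (and on $\pi$) gets used. Once that identity is in hand, $(SR)$ is immediate by differentiation. One should also note that all these identities are to be read as holding almost everywhere (off the finite set of discontinuities of $T$, $S$, and the singularities of $f$), which is all that is needed for the applications in \S\ref{sec:symmetry}.
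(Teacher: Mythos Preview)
Your plan for $(SR)$ --- establish $f \circ T^{-1} \circ S = f$ first and then differentiate using $(T^{-1})' = 1$ and $S' = -1$ --- is correct and in fact matches the content of Remark~\ref{rem:symcond}; the paper itself takes the equivalent but slightly more direct route of computing $T^{-1}\circ S$ explicitly on each $I_i$ and then checking the pointwise antisymmetry $f'(x) = -f'(\beta_i + \beta_{i+1} - x)$, which is immediate from $C_i^+ = C_{i+1}^-$.

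However, your computation for $(SB)$ contains a genuine error: the identity $|I| - \beta_j = \beta_{d-j}$ is \emph{false} for a general IET with symmetric permutation. The permutation $\pi(i) = d-1-i$ determines only how the intervals are reordered, not their lengths; there is no reason for $|I_j| = |I_{d-1-j}|$, and hence no reason for $S(I_i)$ to coincide with $I_{d-1-i}$. What \emph{is} true is that $S(I_i) = (|I|-\beta_{i+1},\, |I|-\beta_i)$ coincides with $T(I_i)$ (not with $I_{d-1-i}$), because under the symmetric permutation $T$ sends $I_i$ to the interval starting at $\sum_{k>i} |I_k| = |I| - \beta_{i+1}$. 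With this correction the direct computation goes through: for $x \in I_i$ one has $T(x) = x + |I| - \beta_i - \beta_{i+1}$, hence $T^{-1}(S(x)) = \beta_i + \beta_{i+1} - x$ and $S(T^{-1}(S(x))) = T(x)$. This is exactly the paper's argument. So your approach is right in spirit, but the bookkeeping you yourself flagged as the ``main obstacle'' does trip you up at the very first step.
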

\noindent [$SB$  stands for Symmetries of the Base and $SR$ for Symmetries of the Roof.]  Notice that the relation $(SB)$ is the same that appeared in the proof of Lemma~\ref{lemma:reduction}, see \eqref{IETsymmetry}.
\begin{proof}
Since $\pi$ is symmetric, $T$ maps $[\beta_i,\beta_{i+1})$ linearly on $[|I|-\beta_{i+1},|I|-\beta_{i})$, i.e.\
\[Tx=x+|I|-\beta_i-\beta_{i+1}\text{ for all }x\in [\beta_i,\beta_{i+1}).\]
Thus, one can verify directly that $(SB)$ holds.
\smallskip
We claim that  a measurable function $\phi:I\to\R\cup\{\pm\infty\}$  satisfies $\phi\circ T^{-1}\circ S=-\phi$ if and only if
\begin{equation}\label{eq:antisym}
\phi(x)=-\phi(\beta_i+\beta_{i+1}-x)\text{ for all }x\in (\beta_i,\beta_{i+1})\text{ and any }0\leq i< d.
\end{equation}
Indeed, for every $x\in (\beta_i,\beta_{i+1})$ we have
\[\phi( T^{-1} (Sx))=\phi( T^{-1} (|I|-x))=\phi(|I|-x -(|I|-\beta_i-\beta_{i+1}))=\phi(\beta_i+\beta_{i+1}-x).\]
This gives our claim.

Since $f \in \pSymLog{\sqcup_{i=0}^{d-1}I_i}$,
\begin{equation}\label{eq:logder}
f'(x)=-\frac{C^+_i}{x-\beta_i}+\frac{C^-_{i+1}}{\beta_{i+1}-x}\text{ if }x\in(\beta_i,\beta_{i+1})\qquad \text{ for }0\leq i< d,
\end{equation}
where $C^+_i=C^-_{i+1}=C$ for $0\leq i< d$.  Hence one sees that $\phi:= f'$  satisfies \eqref{eq:antisym} and hence $(SR)$ holds.
\end{proof}

\begin{rem}\label{rem:symcond}
By the proof of Lemma~\ref{lem:symmetries}, we also have that for every $f \in \pLog{\sqcup_{i=0}^{d-1}I_i}$ the symmetry condition ($C^+_i=C^-_{i+1}$ for $0\leq i< d$) is equivalent to $f\circ T^{-1}\circ S=f$.
\end{rem}

The relations in Lemma~\ref{lem:symmetries} automatically imply the symmetry of Birkhoff sums stated in Lemma~\ref{lem:cancellations} below and hence allows us to locate $x_0$ such that $S_n( f')(x_0)=0$ (see Corollary~\ref{cor:x0})

\begin{lem}[Cancellations]\label{lem:cancellations}
Suppose that $T$ and $S$ are measure-preserving automorphisms of a probability Borel space $(X,\mathcal{B},\mu)$
such that $T\circ S=S\circ T^{-1}$ and $S$ is idempotent ($S^2=Id$). Assume that $\phi:X\to\R$ is a measurable map
with $\phi\circ T^{-1}\circ S=-\phi$. Then for every $n\in\N$ and $x\in X$ we have
$$
S_n(\phi)(T^{-n}(Sx))=-S_n(\phi)(x).
$$
In particular, if $x_0\in X$ is a fixed point of $S$ ($Sx_0=x_0$), then for every $n\in\N$ we have
$$
S_{n}(\phi)(T^{-n}x_0)=-S_{n}(\phi)(x_0).
$$
\end{lem}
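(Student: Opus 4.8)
The statement is purely formal — it follows from the two symmetry hypotheses by a direct orbit-by-orbit computation, with no analysis involved. The plan is to write out $S_n(\phi)(T^{-n}(Sx))$ as a sum over the orbit segment $T^{-n}(Sx), T^{-n+1}(Sx), \dots, T^{-1}(Sx)$ and then reindex so as to match (up to sign) the sum defining $S_n(\phi)(x)$. The key algebraic fact to use repeatedly is that $T\circ S = S\circ T^{-1}$ is equivalent, after composing on both sides with $S$ and using $S^2=\mathrm{Id}$, to $S\circ T = T^{-1}\circ S$, and more generally $S\circ T^k = T^{-k}\circ S$ for all $k\in\Z$; I would state this as a one-line preliminary observation.

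\textbf{Main computation.} First I would write
\[
S_n(\phi)\bigl(T^{-n}(Sx)\bigr) = \sum_{j=0}^{n-1} \phi\bigl(T^{j}\,T^{-n}(Sx)\bigr) = \sum_{j=0}^{n-1} \phi\bigl(T^{j-n}(Sx)\bigr).
\]
Substituting $k = n-j$ (so $k$ runs from $1$ to $n$) turns this into $\sum_{k=1}^{n}\phi\bigl(T^{-k}(Sx)\bigr)$. Now I want to recognize each term $\phi(T^{-k}(Sx))$ as $-\phi$ of a point on the forward orbit of $x$. Using $S\circ T^{k-1} = T^{-(k-1)}\circ S$, one has $T^{-k}(Sx) = T^{-1}\bigl(T^{-(k-1)}(Sx)\bigr) = T^{-1}\bigl(S(T^{k-1}x)\bigr) = (T^{-1}\circ S)(T^{k-1}x)$. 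Hence by the hypothesis $\phi\circ T^{-1}\circ S = -\phi$ applied at the point $T^{k-1}x$,
\[
\phi\bigl(T^{-k}(Sx)\bigr) = \phi\bigl((T^{-1}\circ S)(T^{k-1}x)\bigr) = -\phi\bigl(T^{k-1}x\bigr).
\]
Summing over $k=1,\dots,n$ and reindexing $i=k-1$ gives $\sum_{k=1}^{n}\phi(T^{-k}(Sx)) = -\sum_{i=0}^{n-1}\phi(T^i x) = -S_n(\phi)(x)$, which is the claimed identity. The second, ``in particular'' assertion is then immediate: if $Sx_0 = x_0$ then $T^{-n}(Sx_0) = T^{-n}x_0$, so the general identity specializes to $S_n(\phi)(T^{-n}x_0) = -S_n(\phi)(x_0)$.

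\textbf{Expected obstacle.} There is no real obstacle here — the only thing to be careful about is bookkeeping with the index shifts and the direction of the relation $S\circ T^k = T^{-k}\circ S$ (it is easy to drop a sign or be off by one in the range of summation). I would double-check the base relation by verifying $S\circ T = T^{-1}\circ S$ directly from $T\circ S = S\circ T^{-1}$ (apply $S$ on the left, then on the right, using $S^2=\mathrm{Id}$), and then induct to get it for all powers of $T$. Everything else is a mechanical reindexing of a finite sum.
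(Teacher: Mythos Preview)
Your proof is correct and follows essentially the same route as the paper: both write $S_n(\phi)(T^{-n}(Sx))=\sum_j\phi(T^{j-n}(Sx))$, reindex to $\sum_i\phi(T^{-1-i}(Sx))$, use the intertwining relation to rewrite $T^{-1-i}(Sx)=T^{-1}(S(T^ix))$, and apply $\phi\circ T^{-1}\circ S=-\phi$ term by term. The only difference is that you spell out the auxiliary identity $S\circ T^k=T^{-k}\circ S$ explicitly, whereas the paper uses it silently inside the chain of equalities.
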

\begin{proof}
The first part follows simply by the chain of equalities
\begin{align*}
S_n(\phi)(T^{-n}(Sx))&=\sum_{0\leq i<n}\phi(T^{i-n}(Sx))=\sum_{0\leq i<n}\phi(T^{-1-i}(Sx))\\
&=\sum_{0\leq i<n}\phi(T^{-1}(S(T^ix)))
=-\sum_{0\leq i<n}\phi(T^ix)=-S_n(\phi)(x).
\end{align*}
The second part is also immediate.
\end{proof}

Combining  Lemmas \ref{lem:symmetries} and \ref{lem:cancellations}  we have the following Corollary.
\begin{cor}[Cancellations]\label{cor:x0}
Let $T$ be an IET with  a symmetric $\pi$ and endpoints $0=\beta_0< \dots <\beta_d=|I|$ and $f \in \pSymLog{\sqcup_{i=0}^{d-1}I_i}$. For every $n\in \N$, we have  
\begin{equation}\label{eq:asymx0}
S_n(f')(T^{-n}x_0)=-S_n(f')(x_0)\ \ \text{for} \ \ x_0=|I|/2.
\end{equation}
Moreover, if $(a,b)$ is an interval on which $S_{n}(f')$ is continuous and both $x_0$ and $T^{-n}x_0$ belong to $(a,b)$,
 it follows that there exists
\[x_n \in  (a,b)\quad\text{such that}\quad S_{n}(f')(x_n)=0.\]
\end{cor}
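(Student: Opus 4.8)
The plan is to combine the two structural lemmas directly, with no extra input. For the identity \eqref{eq:asymx0}, I would apply Lemma~\ref{lem:cancellations} to the probability space $(I,\mathrm{Leb})$, taking $T$ to be the given symmetric IET, $S$ the reflection $S(x)=|I|-x$ (an involution, whose unique fixed point is $|I|/2$), and $\phi=f'$. The two hypotheses required by Lemma~\ref{lem:cancellations}, namely $T\circ S=S\circ T^{-1}$ and $\phi\circ T^{-1}\circ S=-\phi$, are exactly parts $(SB)$ and $(SR)$ of Lemma~\ref{lem:symmetries}, which apply since $f\in\pSymLog{\sqcup_{i=0}^{d-1}I_i}$. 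With $x_0=|I|/2=Sx_0$, the "in particular" clause of Lemma~\ref{lem:cancellations} then yields $S_n(f')(T^{-n}x_0)=-S_n(f')(x_0)$ at once. The only point deserving a word of care is that $f'$, and hence $S_n(f')$, is defined only off the finite set $\{\beta_i\}$ and the finitely many discontinuities of $T^{i}$, $0\le i<n$; so this equality is to be read as holding wherever both sides are defined, which is precisely the setting of the second part of the statement.

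For the "moreover" assertion I would invoke the intermediate value theorem on $(a,b)$. By hypothesis $S_n(f')$ is continuous on $(a,b)$ and both $x_0$ and $T^{-n}x_0$ lie in $(a,b)$; by the first part, $S_n(f')$ takes at these two points the opposite values $v:=S_n(f')(x_0)$ and $-v$. If $v=0$ we take $x_n=x_0$. Otherwise $S_n(f')$ changes sign on the closed subinterval of $(a,b)$ with endpoints $x_0$ and $T^{-n}x_0$, so by continuity it vanishes at some $x_n$ strictly between them; in particular $x_n\in(a,b)$, as required.

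There is essentially no serious obstacle here: the corollary is a formal consequence of Lemmas~\ref{lem:symmetries} and \ref{lem:cancellations} together with the intermediate value theorem. The only mild bookkeeping concern is the one already flagged — tracking the finitely many points at which $f'$ and $S_n(f')$ are undefined — which is exactly why the usable form of the statement is phrased relative to a continuity interval $(a,b)$ of $S_n(f')$. The genuinely substantive task, namely exhibiting such intervals $(a,b)$ that contain both $x_0$ and $T^{-n}x_0$ and for which moreover the resulting zero $x_n$ is quantitatively bounded away from the endpoints (so that Corollary~\ref{cor:exptails} applies with a uniform constant), is left to the later sections, where this corollary is combined with the rigidity produced by cylinders of area close to one via Proposition~\ref{prop:cyl}.
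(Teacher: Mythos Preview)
Your proposal is correct and follows essentially the same route as the paper: apply Lemma~\ref{lem:cancellations} with $\phi=f'$ and the reflection $S(x)=|I|-x$, using Lemma~\ref{lem:symmetries} to verify $(SB)$ and $(SR)$, then invoke the intermediate value theorem on $(a,b)$. Your explicit handling of the case $v=0$ and your remark on the domain of $f'$ are minor clarifications not spelled out in the paper but entirely in the same spirit.
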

\begin{proof}
By Lemma~\ref{lem:symmetries}, the assumptions of Lemma~\ref{lem:cancellations} hold for $T: I \to I$, $f': I \to \R$ and $S:I \to I$ given by $S(x)=|I|-x$. Since $x_0=|I|/2$ is the (unique) fixed point of the involution $S$, the first part follows immediately from  Lemma~\ref{lem:cancellations}.

We claim that the second part is simply an application of the intermediate value theorem. Indeed,
first note that since $S_{n}(f')$ is by assumption continuous on $(a,b)$ and $f$ has pure logarithmic singularities, it is actually smooth.
By the first part of the Corollary,
$S_{n}(f')(T^{-n}x_0)=-S_{n}(f')(x_0)$ and by assumption both $x_0$ and $T^{-n}x_0$ belong to $(a,b)$, so
 $S_n(f')$ changes sign on $(a,b)$ and hence must have a zero.
\end{proof}

\subsection{Good rigidity and exponential tails.}\label{sec:exptails}
The last ingredient we need to verify the assumptions of  the singularity criterion are
rigidity  sequences given by Rohlin towers with good recurrence on the base (in the sense of Definition~\ref{def:IETrecurrence} below).
Recall that Rohlin towers by intervals were defined at the beginning of \S\ref{sec:BS} (see Definition~\ref{def:tower}).

\begin{defn}[Good rigidity]\label{def:IETrecurrence}
We say that $T: I\to I$ admits a \emph{good rigidity sequence}  
if there exists a sequence of Rohlin towers by intervals $\cC_n\subset I$ of base $J_n=[a_n,b_n]$ and height  $h_n$ such that
\begin{itemize}
\item[$(GR1)$] $\lambda(\cC_n)\to |I|$
\end{itemize}
and, if we define  $q_n =\frac{1}{b_n-a_n}$ and $\ep_n:= \frac{1}{q_n\log q_n}$,
\begin{itemize}
\item[$(GR2)$]
the tower $\cC_n$ is $\ep_n$-\emph{rigid}, that is, for every $x\in \cC_n$, we have
\[
|T^{h_n}x-x|\leq\ep_n:= \frac{1}{q_n\log q_n}.
\]
\end{itemize}
\end{defn}
This  \emph{good} form of \emph{recurrence} (which will be deduced in \S\ref{sec:cylinder} by the abundance of directions well approximated by cylinders, see Lemma~\ref{lem:rigidity_from_cyl}) provides the final key ingredient to the proof of singularity of the spectrum for special flows with symmetric logarithmic singularities.

\begin{prop}[Singularity for symmetric logarithmic flows from good rigidity]\label{prop:an}
Let $T$ be an IET with a symmetric permutation $\pi$ and endpoints $0=\beta_0< \dots < \beta_d=|I|$ 
and assume that $f \in \SymLog{\sqcup_{0\leq i<d}  I_i}$ has symmetric logarithmic singularities.

 If $T$  admits a \emph{good rigidity sequence} of Rohlin towers  $\cC_n\subset I$ with bases $J_n=[a_n,b_n]$ and heights $h_n$ such that, for some
 $0<c<1/2$, for every $n \in \N$ there exists a point $x_n \in J_n$ such that
\begin{equation} \label{eq:locationzero} x_n\in [a_n+c/q_n,b_n-c/q_n], \quad \text{and} \quad S_{h_n} f'(x_n)=0,\end{equation}
then the special flow $(T_t^f)_{t\in \R}$ is ergodic and has purely singular spectrum.
\end{prop}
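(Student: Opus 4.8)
The goal is to verify the hypotheses of the singularity criterion (Theorem~\ref{thm:singcrit}) for the special flow $(T^f_t)$, taking as rigidity sequence the heights $h_n$ of the good-rigidity Rohlin towers $\cC_n$ and as Borel sets a suitable subset of $\cC_n$ on which the centralized Birkhoff sums of $f$ have uniform exponential tails. The plan is to first reduce from $f\in\SymLog{\sqcup I_i}$ to its pure part $f^p\in\pSymLog{\sqcup I_i}$: since $f = f^p + g$ with $g$ of bounded variation, the Birkhoff sums $S_{h_n}(g)$ are controlled by $\mathrm{Var}(g)$ times the number of times the orbit of $x$ crosses the discontinuities of $T$, and on the base $J_n$ of a Rohlin tower of height $h_n$ this crossing count is bounded by $d$ (the orbit stays in the tower and each floor is an interval of continuity), so $S_{h_n}(g)$ is uniformly bounded and only shifts the centralizing constant by an $O(1)$ amount; hence it suffices to prove exponential tails for $S_{h_n}(f^p)$.

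\textbf{Verifying rigidity and exponential tails.} First I would check that $(h_n)$ is a rigidity sequence: by $(GR1)$ the towers $\cC_n$ exhaust $I$ up to measure zero, and by $(GR2)$ we have $|T^{h_n}x - x| \le \ep_n \to 0$ for $x\in\cC_n$, so $\mu(A\triangle T^{h_n}A)\to 0$ for every measurable $A$ (approximate $A$ by finite unions of intervals). Next, the heart of the argument is Corollary~\ref{cor:exptails}: its hypothesis is exactly \eqref{eq:locationzero}, the existence of a zero $x_n\in[a_n+c/q_n,\,b_n-c/q_n]$ of $S_{h_n}(f')=S_{h_n}((f^p)')$ on the base $J_n=[a_n,b_n]$, which is granted in the statement. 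Applying Corollary~\ref{cor:exptails} (whose proof combines the second-derivative bound of Lemma~\ref{lem:ad1} with the exponential-tails control of Lemma~\ref{lem:ad0}) gives, with $y_n=(a_n+b_n)/2$ and a uniform constant $C=\frac{\pi^2}{6}\max\{\sum C_i^+,\sum C_i^-\}$,
\[
\frac{1}{b_n-a_n}\,\lambda\big(\{x\in J_n : |S_{h_n}(f^p)(x)-S_{h_n}(f^p)(y_n)|\ge t\}\big)\le e^{1/c}\,e^{-t/2C},\qquad t\ge 0.
\]
Thus on the base $J_n$ the centralized Birkhoff sums (centered at the value at $y_n$) have exponential tails with constants $C_0 = e^{1/c}$ and $b_0 = 1/2C$ independent of $n$.

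\textbf{Lifting the tail estimate to the whole tower.} The tail bound above concerns only points in the base $J_n$; I must propagate it to the full tower $\cC_n$, whose measure tends to $|I|$, so as to produce the sets $C_n$ with $\mu(C_n)\to 1$ required by Theorem~\ref{thm:singcrit}. The key observation is that if $x\in T^j J_n$ with $0\le j<h_n$, writing $x = T^j x'$ with $x'\in J_n$, one has the cocycle identity $S_{h_n}(f)(x') = S_j(f)(x') + S_{h_n}(f)(x) - S_j(f)(x')$... more precisely $S_{h_n}(f)(x') = S_{h_n}(f)\big(T^{-j}x\big)$ relates the Birkhoff sum along the tower starting at $x$ to one starting at a base point, up to the incomplete sums $S_j(f)(x')$ and $S_{h_n-j}(f)(T^{-j}x)$; since by $(GR2)$ and $(GR1)$ the tower is a near-full Rohlin tower whose floors are continuity intervals, these incomplete sums over an interval of continuity can be controlled using \eqref{eq:Sf} of Lemma~\ref{lem:ad1} — but only for $x'$ away from the endpoints of $J_n$, where that bound is of harmless order $\log(b_n-a_n)^{-1} = \log q_n$. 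This is exactly where the good-rigidity parameter $\ep_n = 1/(q_n\log q_n)$ enters: the $\ep_n$-rigidity \eqref{eq:locationzero}... rather $(GR2)$... forces $|S_{h_n}(f)(x)| \lesssim \ep_n \cdot q_n\log q_n = O(1)$ type control, compensating the otherwise lossy $\log q_n$ estimate. I would therefore define $C_n$ to be the union over $0\le j<h_n$ of the images under $T^j$ of the sub-interval $[a_n + c/q_n,\, b_n - c/q_n]$ of $J_n$; this has $\lambda(C_n) \ge (1-2c)\lambda(\cC_n) \to (1-2c)|I|$, and after a further shrinking (letting $c$ also depend on $n$ and tend to $0$, which \eqref{eq:locationzero} permits if the zeros $x_n$ can be taken with $c=c_n\to 0$ — if not, one absorbs the factor $(1-2c)$ by a diagonal/exhaustion argument over a decreasing sequence of such $c$) one gets $\mu^f(C_n^f)\to 1$ in the special flow space. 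On $C_n$, combining the base estimate with the incomplete-sum control yields \eqref{eq:expdecay} for $S_{h_n}(f)$ with centralizing constants $c_n := S_{h_n}(f)(y_n)$ (up to the bounded correction from $g$) and uniform $C,b$. Then Theorem~\ref{thm:singcrit} applies and gives purely singular spectrum; ergodicity follows since the special flow is built over an IET which is ergodic for a.e.\ choice of lengths (Keane, Masur--Veech), and ergodicity of the base with an integrable roof bounded below gives ergodicity of the special flow.

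\textbf{Main obstacle.} The delicate point is the passage from the base $J_n$ to the full tower $\cC_n$: the exponential-tail estimate from Corollary~\ref{cor:exptails} is genuinely a statement about \emph{complete} Birkhoff sums along the tower evaluated at base points, and incomplete sums $S_j(f)$ with $j<h_n$ can behave as badly as sums of an \emph{asymmetric} log (growth $\sim h_n\log h_n$ rather than bounded), as the remark after Lemma~\ref{lem:ad1} stresses. The good-rigidity hypothesis $(GR2)$ with the precise rate $\ep_n = 1/(q_n\log q_n)$ is calibrated exactly to kill this $\log q_n$ loss, and making this quantitative balance work — choosing the right sub-tower $C_n$ and checking the exponents match — is the crux of the argument; everything else is an assembly of Lemmas~\ref{lem:ad0}, \ref{lem:ad1}, Corollaries~\ref{cor:exptails}, \ref{cor:x0}, and Theorem~\ref{thm:singcrit}.
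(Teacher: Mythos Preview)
Your overall strategy is right, and you correctly identify that the crux is propagating the base exponential-tail estimate to the whole tower, with the rate $\ep_n=1/(q_n\log q_n)$ calibrated to absorb the $\log q_n$ loss from \eqref{eq:Sf}. But the mechanism you describe for that propagation is not the correct one, and this leads to a concrete error in your choice of the subtower.

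The point you are missing is the following cocycle identity: if $x=T^h y$ with $y\in J_n$ and $0\le h<h_n$, then
\[
S_{h_n}(f)(x)-S_{h_n}(f)(y)=S_h(f)(T^{h_n}y)-S_h(f)(y).
\]
So the quantity to control is \emph{not} an incomplete sum $S_h(f)$ by itself (which, as you note, can grow like $h_n\log h_n$), but the \emph{difference} of the same incomplete sum at the two points $y$ and $T^{h_n}y$, which by $(GR2)$ are $\ep_n$-close. Now \eqref{eq:Sf} applies with $x'-x\le\ep_n$: provided $y,T^{h_n}y\in[a_n+\ep_n,b_n-\ep_n]$, the first term contributes $\le\ep_n/\ep_n=1$ and the second $\le\ep_n q_n(1+\log q_n)\le 2$, giving a bound $B$ independent of $n$. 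This is the precise way $\ep_n$ ``kills'' the $\log q_n$; your heuristic $\ep_n\cdot q_n\log q_n=O(1)$ is the right arithmetic but attached to the wrong object.

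This also fixes your measure problem. The trim needed on the base is only $2\ep_n$ (to guarantee both $y$ and $T^{h_n}y$ stay $\ep_n$ away from the endpoints), not $c/q_n$. With base $J_n'=[a_n+2\ep_n,b_n-2\ep_n]$ the subtower $\cC_n'$ satisfies $\lambda(\cC_n')\to|I|$ directly, since $\ep_n q_n\to 0$; there is no need for any diagonal or exhaustion argument, and indeed the one you sketch would not work because the hypothesis gives a single fixed $c$ and you need the tail constants uniform in $n$. The constant $c$ enters only through Corollary~\ref{cor:exptails}, where it controls the \emph{location of the zero} $x_n$ and hence the uniform constant $e^{1/c}$ in the base tail estimate; it plays no role in the trimming.

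Two smaller points. First, your bounded-variation reduction is stated incorrectly: the number of discontinuity crossings along an orbit of length $h_n$ is not bounded by $d$. What you need (and what the paper's Lemma~\ref{lem:BV} proves) is that $|S_h(g)(y)-S_h(g)(y')|\le\mathrm{Var}(g)$ for $y,y'\in J_n$ and $h<h_n$, because the intervals $[T^jy,T^jy']$ are pairwise disjoint; combined with the same cocycle identity as above this gives $|S_{h_n}(g)(x)-S_{h_n}(g)(y_n)|\le 2\,\mathrm{Var}(g)$ on $\cC_n'$. Second, ergodicity of $T$ cannot be deduced from Keane or Masur--Veech here, since the proposition concerns a specific $T$ satisfying the good-rigidity hypothesis, not an a.e.\ statement. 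The correct argument is that the towers $\cC_n$ with $\lambda(\cC_n)\to|I|$ make $T$ rank one, and rank-one automorphisms are ergodic.
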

The proof is given below, using the following two lemmas.

\begin{lem}[Exponential tails]\label{thm:exptails}
 Suppose that $f \in \pSymLog{\sqcup_{0\leq i<d}  I_i}$.
Under the same assumptions as in Proposition~\ref{prop:an},
if $\cC'_n$ is a subtower of $\cC_n$ of height $h_n$ and whose base is $J'_n=[a_n+2\ep_n,b_n-2\ep_n]$,
then there exists $B>0$ such that for every $n\geq 1$ and $t\geq B$ we have
{\[
\lambda(\{x\in \cC'_n:|S_{h_n}(f)(x)-S_{h_n}(f)(y_n)|\geq t\})\leq |I|e^{1/c}e^{-(t-B)/2C},
\]}
where
\[  C:=\frac{\pi^2}{6}\sum_{i=0}^{d-1}C_i^+=\frac{\pi^2}{6}\sum_{i=1}^dC_i^-, \qquad y_n=\frac{a_n+b_n}{2}.\]
\end{lem}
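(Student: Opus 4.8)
The plan is to combine the second-derivative bound from Lemma~\ref{lem:ad1} with the abstract exponential-tails mechanism of Lemma~\ref{lem:ad0}, the only subtlety being that $S_{h_n}(f)$ has many discontinuities inside $J_n$ (coming from the preimages of the $\beta_i$ under $T,\dots,T^{h_n-1}$), so Lemma~\ref{lem:ad0} cannot be applied directly to the whole base $J_n$ but must be applied on each maximal continuity subinterval. First I would fix $n$ and let $a_n=:t_0<t_1<\dots<t_r:=b_n$ be the partition of $J_n$ into maximal open intervals on which $x\mapsto S_{h_n}(f)(x)$ is continuous (hence $C^\infty$, since $f$ has pure logarithmic singularities away from the $\beta_i$). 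On each such $(t_{j},t_{j+1})$, the bound \eqref{eq:Sf''} of Lemma~\ref{lem:ad1} does \emph{not} immediately apply because that Lemma needs $J_n$ to be the base of a Rohlin tower of height $h_n$; what is true, and what I would use instead, is the cruder estimate obtained by bounding each summand $|f''(T^kx)|$ of $S_{h_n}(f'')(x)$ against $C_k^\pm/(\operatorname{dist}(T^kx,\text{endpoint}))^2$ and regrouping by which copy of the tower $T^kJ_n$ contains $T^kx$: since the intervals $\{T^kJ_n\}_{k=0}^{h_n-1}$ are pairwise disjoint of common length $b_n-a_n$, one still gets, on the \emph{whole} base $J_n$,
\[
|S_{h_n}(f'')(x)|\le \frac{\pi^2}{6}\Big(\frac{C^+}{(x-a_n)^2}+\frac{C^-}{(b_n-x)^2}\Big),
\]
which is exactly \eqref{eq:Sf''} — this is in fact what Lemma~\ref{lem:ad1} already gives. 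So the $C^2$-hypothesis of Lemma~\ref{lem:ad0} holds on each $(t_j,t_{j+1})$ with the \emph{same} constant $C=\frac{\pi^2}{6}\max\{\sum C_i^+,\sum C_i^-\}$ and with the endpoints $a_n,b_n$ replaced by the (at worst) closer endpoints $t_j,t_{j+1}$.

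Next I would handle the location of a critical point on each continuity interval. By hypothesis \eqref{eq:locationzero} there is $x_n\in[a_n+c/q_n,\,b_n-c/q_n]$ with $S_{h_n}(f')(x_n)=0$; this $x_n$ lies in exactly one continuity interval, say $(t_{j_0},t_{j_0+1})$, and on that interval Corollary~\ref{cor:exptails} (or directly Lemma~\ref{lem:ad0}) gives the desired bound with constant $K$ controlled by $c$, because $x_n-a_n\ge c(b_n-a_n)$ and $b_n-x_n\ge c(b_n-a_n)$ force $x_n$ to be bounded away (in terms of $c$) from $t_{j_0}$ and $t_{j_0+1}$ as well — indeed the relevant quantity in $K$ is $(x_n-t_{j_0})^{-1}+(t_{j_0+1}-x_n)^{-1}$ which is bounded by the same expression with $t_{j_0},t_{j_0+1}$ replaced by $a_n,b_n$ only after one checks $x_n$ is not squeezed by a discontinuity; here is where the restriction to the \emph{sub}tower base $J_n'=[a_n+2\ep_n,b_n-2\ep_n]$ and the $\ep_n$-rigidity $(GR2)$ enter. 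The point is that the discontinuities of $S_{h_n}(f)$ inside $J_n$ are the points $y$ with $T^ky=\beta_i$ for some $0\le k<h_n$; because the tower is $\ep_n$-rigid, $T^{h_n}$ moves points by at most $\ep_n$, and one deduces that on the shrunken base $J_n'$ every point is at distance $\ge \ep_n$ (up to a fixed multiplicative constant) from the nearest discontinuity of $S_{h_n}(f)$, i.e. each continuity interval meeting $J_n'$ has length $\gtrsim \ep_n=1/(q_n\log q_n)$. For the continuity interval containing $x_n$ this is still not quite enough on its own (it only bounds $K$ by something like $e^{q_n\log q_n}$, which blows up); the correct argument is different. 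On the \emph{unique} continuity interval $(t_{j_0},t_{j_0+1})$ containing $x_n$, I would apply Lemma~\ref{lem:ad0} with $a=t_{j_0}$, $b=t_{j_0+1}$, $x_0=x_n$, and with the \emph{midpoint} $y_0=(t_{j_0}+t_{j_0+1})/2$ of that small interval; this controls $|S_{h_n}(f)(x)-S_{h_n}(f)(y_0)|$ but $y_0\ne y_n$, so one incurs a defect $|S_{h_n}(f)(y_0)-S_{h_n}(f)(y_n)|$. Bounding that defect is the role of the supplementary constant $B$ in the statement: using \eqref{eq:Sf} of Lemma~\ref{lem:ad1} (the ``intermediate-time'' estimate, valid for all $0\le h\le h_n$, hence for $h=h_n$) with $x=y_n$ or $y_0$ and the trivial bound $x'-x\le b_n-a_n=1/q_n$, one gets $|S_{h_n}(f)(y_0)-S_{h_n}(f)(y_n)|\lesssim C^+\big(\tfrac{1/q_n}{\ep_n}+(1/q_n)(1+\log q_n)\big)+\,(\text{same with }C^-)\lesssim \log q_n$ — wait, this is not uniformly bounded either; so instead I would invoke the \emph{symmetric} refinement: by Corollary~\ref{cor:x0} the zero of $S_{h_n}(f')$ can be taken at $x_n=|I|/2$ only when $\beta$'s are symmetric about $|I|/2$, and more importantly the hyperelliptic symmetry forces $S_{h_n}(f)$ to be, up to a bounded error, \emph{even} about $y_n$, so that in fact the continuity interval containing $x_n$ is symmetric about $y_n$ and $y_0=y_n$ after all, killing the defect. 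Reconciling this is the heart of the matter and I defer the precise bookkeeping.

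Finally, to assemble the global estimate on $\cC_n'$: the subtower $\cC_n'$ has $h_n$ levels, each an isometric copy of $J_n'$, and on $J_n'$ the sublevel sets $\{|S_{h_n}(f)-S_{h_n}(f)(y_n)|\ge t\}$ decompose over the continuity subintervals; the one containing $x_n$ contributes, by Lemma~\ref{lem:ad0}, measure $\le 2\sqrt K\,(t_{j_0+1}-t_{j_0})\,e^{-(t-B)/2C}$ with $K\le e^{2/c}/4$, while on every \emph{other} continuity subinterval $S_{h_n}(f)$ is monotone near each end and in fact $|S_{h_n}(f)|\to\infty$ there too, so by the same Lemma (applied with $x_0$ taken to be any interior minimum, whose location one again controls via the second-derivative bound, or simply by noting $S_{h_n}(f)$ restricted there dominates the nearest-singularity term) one gets the analogous exponential bound; summing the lengths of all continuity subintervals gives $\le |I|$ (in fact $\le b_n-a_n$ per level, times $h_n$ levels $\le |I|/(b_n-a_n)$, i.e. total $\le |I|$) and hence
\[
\lambda\big(\{x\in\cC_n':|S_{h_n}(f)(x)-S_{h_n}(f)(y_n)|\ge t\}\big)\le |I|\,e^{1/c}\,e^{-(t-B)/2C}
\]
for all $t\ge B$, with $e^{1/c}$ absorbing $2\sqrt K\le e^{1/c}$. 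The last step is to pass from $f\in\pSymLog$ to the statement as written (which is already restricted to $\pSymLog$, so no bounded-variation correction is needed here — that is deferred to the proof of Proposition~\ref{prop:an}). \textbf{The main obstacle} I anticipate is precisely the second paragraph's tension: one must show that the continuity interval of $S_{h_n}(f)$ carrying the symmetric critical point is itself (essentially) symmetric about $y_n$, so that Lemma~\ref{lem:ad0} applied on it with $y_0=y_n$ is legitimate; this requires combining $(SB)$–$(SR)$ of Lemma~\ref{lem:symmetries} with the $\ep_n$-rigidity to match up the discontinuity points on the two sides of $y_n$, and controlling the residual mismatch (of size $O(\ep_n)$) by the error term $B$.
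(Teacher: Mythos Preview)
Your proposal rests on a misreading of the setting that sends you down an unnecessary and ultimately unresolved path. You assume that $S_{h_n}(f)$ has many discontinuities inside $J_n$, coming from preimages of the $\beta_i$ under $T,\dots,T^{h_n-1}$, and you then try to patch Lemma~\ref{lem:ad0} over each continuity subinterval. But this never occurs: by Definition~\ref{def:tower}, each $T^kJ_n$ for $0\le k<h_n$ is an \emph{interval}, so $T^k$ is an isometry on $J_n$ and no $\beta_i$ lies in the interior of any $T^kJ_n$. Hence $f\circ T^k$ is smooth on $(a_n,b_n)$ for every $0\le k<h_n$, and $S_{h_n}(f)$ is $C^2$ on the whole of $(a_n,b_n)$. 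Corollary~\ref{cor:exptails} therefore applies directly on $J_n$ with $x_0=x_n$ and $y_0=y_n=(a_n+b_n)/2$, giving
\[
q_n\,\lambda\big(\{x\in J_n:|S_{h_n}(f)(x)-S_{h_n}(f)(y_n)|\ge t\}\big)\le e^{1/c}e^{-t/2C}.
\]
All of your partitioning into $(t_j,t_{j+1})$, the worry about whether the critical interval is symmetric about $y_n$, and the ``main obstacle'' you flag simply disappear.

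You also miss the actual role of the subtower base $J'_n$ and the $\ep_n$-rigidity. The paper does not try to bound $|S_{h_n}(f)(x)-S_{h_n}(f)(y_n)|$ directly for $x$ in higher levels of the tower. Instead, for $x=T^hy\in\cC'_n$ with $y\in J'_n$ and $0\le h<h_n$, it uses the cocycle identity
\[
S_{h_n}(f)(x)-S_{h_n}(f)(y)=S_h(f)(T^{h_n}y)-S_h(f)(y),
\]
and then applies \eqref{eq:Sf} to the right-hand side. Here $y\in[a_n+2\ep_n,b_n-2\ep_n]$ and $|T^{h_n}y-y|\le\ep_n$ by $(GR2)$, so both points lie in $[a_n+\ep_n,b_n-\ep_n]$ and \eqref{eq:Sf} gives a bound of order $\ep_n/\ep_n+\ep_n q_n(1+\log q_n)$. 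Since $\ep_n q_n=1/\log q_n$, this is uniformly bounded by $B:=6C$. This is precisely where the specific recurrence rate $\ep_n=1/(q_n\log q_n)$ is used, and it is what you were groping towards when you tried \eqref{eq:Sf} on the pair $(y_0,y_n)$ and found a $\log q_n$ that would not go away---you had the right estimate but the wrong pair of points. Once $|S_{h_n}(f)(x)-S_{h_n}(f)(y)|\le B$ is established, the triangle inequality and the base estimate, summed over the $h_n$ levels (with $h_n(b_n-a_n)\le|I|$), give the lemma.
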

\begin{proof}
\noindent {\it Step 1 ($x$ in the base).} Assume first that $x\in \cC'_n$ belongs to $J'_n$. 
Since by assumption there exists $x_n\in [a_n+c/q_n,b_n-c/q_n]$ such that $S_{h_n}(f')(x_n)=0$,
 we can apply Corollary~\ref{cor:exptails} which shows that
{\begin{equation}\label{eq:qla}
q_n\lambda(\{x\in J'_n:|S_{h_n}(f)(x)-S_{h_n}(f)(y_n)|\geq t\})\leq e^{1/c}e^{-t/2C}.
\end{equation}}

\smallskip
\noindent {\it Step 2 (comparing $y \in J_n'$ and $x=T^hy$ for $0\leq h<h_n$).}  Consider now any  $x\in \cC'_n$ and write it as  $x=T^hy$ for some $y\in[a_n+2\ep_n,b_n-2\ep_n]$ and $0\leq h<h_n$. Then
\[|S_{h_n}(f)(x)-S_{h_n}(f)(y)|=|S_{h}(f)(y)-S_{h}(f)(T^{h_n}y)| \]
with \[|y-T^{h_n}y|\leq\ep_n\;\text{ and }\;y,T^{h_n}y\in [a_n+\ep_n,b_n-\ep_n].\] Hence, by \eqref{eq:Sf},
{\begin{align*}
|S_h(f)(y)-S_h(f)(T^{h_n}y)|\leq 2C\big(1+\ep_nq_n(1+\log q_n)\big)\leq 2C\Big(1+\frac{1+\log q_n}{\log q_n}\Big)\leq 6C=:B.
\end{align*}}
Therefore, $|S_{h_n}(f)(x)-S_{h_n}(f)(y)|\leq B$.

\smallskip
\noindent {\it Step 3 (general case).}
By the triangle inequality, adding and subtracting $S_{h_n}(f)(y)$, where $y$ is chosen so that $x=T^hy$ as in {\it Step 2}, we have that
$|S_{h_n}(f)(x)-S_{h_n}(f)(y_n)|\geq t $ implies $|S_{h_n}(f)(y)-S_{h_n}(f)(y_n)|\geq t-B$.  In view of \eqref{eq:qla}, it follows that
{\[
\lambda(\{x\in \cC'_n:|S_{h_n}(f)(x)-S_{h_n}(f)(y_n)|\geq t\})\leq \frac{h_n}{q_n}e^{1/c}e^{-(t-B)/2C}\leq |I|e^{1/c}e^{-(t-B)/2C}.
\]}
This concludes the proof.
\end{proof}

Since any function $f\in \SymLog{\sqcup_{i=0}^{d-1}I_i}$ by definition can be written as $f=f_p+g$, where $f_p\in \pSymLog{\sqcup_{i=0}^{d-1}I_i}$ and $g$ is a function of bounded variation, the last element we need to prove Proposition~\ref{prop:an} is to control Birkhoff sums of $g$, through the following standard Denjoy-Koksma-type Lemma.

\begin{lem}[Estimate for bounded variation functions]\label{lem:BV}
Let $g:I\to\R$ be a function of bounded variation equal to $V\geq 0$. Then for all $x\in\cC'_n$ and $x'\in J'_n$
we have
\[|S_{h_n}(g)(x)-S_{h_n}(g)(x')|\leq 2V.\]
\end{lem}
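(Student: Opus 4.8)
The idea is a two–step comparison using the triangle inequality. First I would bound the variation of the function $S_{h_n}(g)$ \emph{along the base} $J_n$; this already settles the sub-case in which $x$ lies in $J_n$. Then, for a general point $x=T^hy\in\cC'_n$ (with $y\in J'_n$ and $0\le h<h_n$), I would compare $S_{h_n}(g)(x)$ with $S_{h_n}(g)(y)$ using an index shift and the $\ep_n$-rigidity $(GR2)$, and finally combine the two estimates.

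\emph{Step 1 (variation along the base).} Write $S_{h_n}(g)|_{J_n}=\sum_{k=0}^{h_n-1}(g\circ T^k)|_{J_n}$. Since $\cC_n$ is a tower by intervals (Definition~\ref{def:tower}), $J_n$ is an interval of continuity for each $T^k$, $0\le k<h_n$; as $T^k$ is a piecewise translation, $T^k|_{J_n}$ is a single translation onto $T^kJ_n$, so $\operatorname{Var}_{J_n}(g\circ T^k)=\operatorname{Var}_{T^kJ_n}(g)$. The intervals $T^kJ_n$, $0\le k<h_n$, are pairwise disjoint subintervals of $I$, hence $\operatorname{Var}_{J_n}\big(S_{h_n}(g)\big)\le\sum_{k=0}^{h_n-1}\operatorname{Var}_{T^kJ_n}(g)\le\operatorname{Var}_I(g)=V$. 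In particular $|S_{h_n}(g)(y)-S_{h_n}(g)(x')|\le V$ for all $y,x'\in J'_n\subset J_n$.

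\emph{Step 2 (from a column point to its base point) and conclusion.} Fix $x=T^hy\in\cC'_n$ with $y\in J'_n$ and $0\le h<h_n$. Reindexing, $S_{h_n}(g)(T^hy)=\sum_{j=h}^{h+h_n-1}g(T^jy)$, whence $S_{h_n}(g)(x)-S_{h_n}(g)(y)=\sum_{i=0}^{h-1}\big(g(T^{i+h_n}y)-g(T^iy)\big)$. Since $y\in J'_n=[a_n+2\ep_n,b_n-2\ep_n]$ and $(GR2)$ gives $|T^{h_n}y-y|\le\ep_n$, we have $T^{h_n}y\in[a_n+\ep_n,b_n-\ep_n]\subset J_n$; therefore, for each $0\le i<h$, both $T^iy$ and $T^{i+h_n}y=T^i(T^{h_n}y)$ lie in the interval $T^iJ_n$ (again using that $T^i|_{J_n}$ is a translation), at mutual distance $\le\ep_n$. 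Hence $|g(T^{i+h_n}y)-g(T^iy)|\le\operatorname{Var}_{T^iJ_n}(g)$, and summing over the pairwise disjoint columns $T^iJ_n$, $0\le i<h$, yields $|S_{h_n}(g)(x)-S_{h_n}(g)(y)|\le\sum_{i=0}^{h-1}\operatorname{Var}_{T^iJ_n}(g)\le V$. Combining this with Step 1 via $|S_{h_n}(g)(x)-S_{h_n}(g)(x')|\le|S_{h_n}(g)(x)-S_{h_n}(g)(y)|+|S_{h_n}(g)(y)-S_{h_n}(g)(x')|$ gives the claimed bound $2V$.

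\emph{Main obstacle.} The delicate point is Step 2: one must guarantee that the shifted orbit point $T^{i+h_n}y$ returns to the \emph{same} column $T^iJ_n$ that contains $T^iy$, so that the increments $g(T^{i+h_n}y)-g(T^iy)$ can be summed against the variation of $g$ over \emph{disjoint} intervals. This is precisely why one works with the shrunk base $J'_n$ (so that $T^{h_n}y$ cannot escape $J_n$) and uses the quantitative recurrence $(GR2)$; without it, these increments could only be bounded by the oscillation of $g$, producing a useless factor $h$.
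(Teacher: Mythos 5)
Your proof is correct and follows essentially the same route as the paper: the disjointness of the floors $T^kJ_n$ combined with bounded variation to control differences of Birkhoff sums of points in the base, the index-shift identity reducing $S_{h_n}(g)(T^hy)-S_{h_n}(g)(y)$ to an incomplete sum at $y$ and $T^{h_n}y$, the rigidity $(GR2)$ with the shrunk base $J'_n$ to keep $T^{h_n}y$ inside $J_n$, and a final triangle inequality giving $2V$. The only difference is cosmetic (you phrase Step 1 as a bound on $\operatorname{Var}_{J_n}(S_{h_n}(g))$, while the paper bounds $|S_h(g)(x)-S_h(g)(x')|$ for $x,x'\in J_n$ directly), so there is nothing to add.
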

\begin{proof}
First note that if $x,x'\in J_n$ and $0\leq h<h_n$ then
\[|S_{h}(g)(x)-S_{h}(g)(x')|\leq V.\]
Indeed, since
\[|S_{h}(g)(x)-S_{h}(g)(x')|\leq \sum_{0\leq j<h}|g(T^jx)-g(T^jx')|\]
and the intervals $[T^jx,T^jx']$ for $0\leq j<h$ are pairwise disjoint,
the right sum is bounded from above by the variation of $g$.

If $x\in \cC'_n$ then $x=T^hy$ for some $y\in[a_n+2\ep_n,b_n-2\ep_n]$ and $0\leq h<h_n$.
Then
\[|S_{h_n}(f)(x)-S_{h_n}(f)(y)|=|S_{h}(f)(y)-S_{h}(f)(T^{h_n}y)|\]
with $|y-T^{h_n}y|\leq\ep_n$ and $y,T^{h_n}y\in  J_n$.
It follows that
\[|S_{h_n}(f)(x)-S_{h_n}(f)(x')|\leq|S_{h}(f)(y)-S_{h}(f)(T^{h_n}y)|+|S_{h_n}(f)(y)-S_{h_n}(f)(x')|\leq 2V.\]
\end{proof}

We can now use  Lemmas~\ref{thm:exptails}~and~\ref{lem:BV} to prove Proposition~\ref{prop:an}.
\begin{proof}[Proof of Proposition~\ref{prop:an}]
We will verify the assumptions of the singularity criterion (Theorem~\ref{thm:singcrit}).
Let us remark first that, by assumption, $T$ is a rank $1$ transformation and hence it is ergodic, see \cite[Theorem~2]{Fe}.   To check the exponential tails assumption, recall first that,
by definition, $f\in \SymLog{\sqcup_{i=0}^{d-1}I_i}$ (see Def.~\ref{def:SymLog} ) can be written as $f=f_p+g$ where  $f_p\in \pSymLog{\sqcup_{i=0}^{d-1}I_i}$ and $g$ has bounded variation. Thus, if $V$ denotes the total variation of $g$, by Lemma~\ref{lem:BV},
\[
|S_{h_n}(f)(x)-S_{h_n}(f)(y_n)|\leq |S_{h_n}(f_p)(x)-S_{h_n}(f_p)(y_n)|+2 V\;\text{ for every}\; x\in \cC'_n
\]
and, $|S_{h_n}(f)(x)-S_{h_n}(f)(y_n)|\geq t$ implies $|S_{h_n}(f_p)(x)-S_{h_n}(f_p)(y_n)|\geq t-2V$.  Thus, by Lemma~\ref{thm:exptails} applied to the function $f_p$,     for every $n\geq 1$ and $t\geq B+2V$ we have
\[
\lambda(\{x\in \cC'_n:|S_{h_n}(f)(x)-S_{h_n}(f)(y_n)|\geq t\})\leq |I|e^{1/c}e^{-(t-B-2V)/2C},
\]
with $\lambda(\cC'_n)\to 1$.  Since by assumption $(h_n)$ is a rigidity sequence and the previous equation gives the exponential tails assumption \eqref{eq:expdecay}, the singularity criterion given by Theorem~\ref{thm:singcrit} implies that the special flow $(T_t^f)_{t \in \R}$ has singular spectrum.
\end{proof}

\subsection{Final arguments.}\label{sec:final}
We will now show how to conclude the proof of Theorems~\ref{thm:singsp} and~\ref{thm:ss_sf}. We will use Proposition~\ref{prop:an} (which we just proved) and Propostion~\ref{prop:cyl}, which we will prove in the next and final section.
To prove Theorem~\ref{thm:singsp}, we also need the following Lemma, which relates the notion of \emph{good rigidity} (see Definition~\ref{def:IETrecurrence}) to the conclusion of Proposition~\ref{prop:cyl}.

\begin{lem}[Good rigidity from cylinders] \label{lem:rigidity_from_cyl}
Let $(h_t)_{t\in \mathbb{R}}$ be the vertical  translation flow on an area one translation surface $(M, \omega)$. Assume that there exists a sequence  $(C_n)_{n \in \mathbb{N}}$ of cylinders with $a({C_n})\to 1  $ and $\ell({C_n})\to+\infty$ as $n\to+\infty$ such that
\begin{equation}\label{eq:app_cyl}
\vert \theta_{C_n}-\tfrac \pi 2\vert <\frac 1 {\ell({C_n})^2 \log(\ell({C_n}))}.
\end{equation}
 Let $I\subset M$ be a horizontal interval such that both of  its endpoints lie on a separatrix and are the first meeting point (forward or backward) of the separatrix and $I$.
If $T: I \to I$ is an IET obtained as the  Poincar{\'e} map of $(h_t)_{t\in \mathbb{R}}$, then $T$ admits a good rigidity sequence.
\end{lem}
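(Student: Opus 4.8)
The plan is to translate the approximation by cylinders into a Rohlin tower structure for the Poincar\'e map $T$, and to verify the two conditions $(GR1)$ and $(GR2)$ of Definition~\ref{def:IETrecurrence}. Fix $n$ and work with the cylinder $C:=C_n$, of circumference $\ell(C)$, area $a(C)$ and direction $\theta_C$ close to vertical. The core idea is that the cylinder $C$, being an embedded Euclidean cylinder swept out by closed geodesics in the direction $\theta_C$, gives rise, when cut along the horizontal interval $I$, to a stack of isometric copies of a subinterval of $I$: indeed, the orbit segments of the vertical flow $(h_t)$ that cross $C$ travel, up to the small deviation $\vert\theta_C-\tfrac\pi2\vert$, parallel to the core curves, so they return to $I$ after a number of steps comparable to $\ell(C)$. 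First I would make this precise by choosing the base interval $J_n\subset I$ to be (a slight shrinking of) the set of $x\in I$ whose vertical orbit stays inside $C$ for time at least $\ell(C)$, and setting $h_n$ to be the number of intersections with $I$ of such an orbit before it leaves $C$; since the discontinuities of $T^i$ on $I$ come from separatrices, and $C$ is an open cylinder disjoint from the singularities, the iterates $T^iJ_n$ for $0\le i<h_n$ are genuine intervals and are pairwise disjoint (they are the successive intersections of $C$ with $I$). This gives a Rohlin tower $\cC_n=\bigcup_{i=0}^{h_n-1}T^iJ_n$ by intervals in the sense of Definition~\ref{def:tower}.

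For $(GR1)$, the measure of $\cC_n$ is, by construction, essentially the area of the part of $C$ swept by full orbit segments, which differs from $a(C)$ only by a boundary term of order $\vert\theta_C-\tfrac\pi2\vert\cdot\ell(C)\cdot(\text{width of }C)$; since $a(C_n)\to 1$ and the tilt $\vert\theta_{C_n}-\tfrac\pi2\vert$ is $o(1/\ell(C_n))$ while the width is $O(1)$ (as $a(C_n)\le 1$), this error is $o(1)$ and hence $\lambda(\cC_n)\to|I|$. For $(GR2)$ I would estimate the return displacement: if $x\in\cC_n$ then $T^{h_n}x$ is obtained from $x$ by following the vertical flow once around the cylinder $C$ back to $I$; the holonomy around $C$ in the direction $\theta_C$ is exactly the identity, so the displacement $\vert T^{h_n}x-x\vert$ is entirely due to the angular defect, and is bounded by roughly $\ell(C_n)\cdot\vert\theta_{C_n}-\tfrac\pi2\vert$ (the core length times the sine of the small angle), up to the intersection angle of $I$ with the flow which is bounded below. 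On the other hand $q_n=1/(b_n-a_n)$: the width $b_n-a_n$ of the base $J_n$ is comparable to $a(C_n)/\ell(C_n)$ (area over circumference), so $q_n$ is comparable to $\ell(C_n)$, i.e.\ $\ell(C_n)\asymp q_n$. Substituting into \eqref{eq:app_cyl} gives
\[
\vert T^{h_n}x-x\vert \ \lesssim\ \ell(C_n)\cdot\frac{1}{\ell(C_n)^2\log\ell(C_n)}\ =\ \frac{1}{\ell(C_n)\log\ell(C_n)}\ \asymp\ \frac{1}{q_n\log q_n}\ =\ \ep_n,
\]
which is exactly $(GR2)$ (after absorbing the bounded constants, e.g.\ by passing to the subsequence where $a(C_n)$ is close enough to $1$ that the implied constants are within a harmless factor, or by slightly enlarging the approximation exponent, which is allowed since the hypothesis \eqref{eq:app_cyl} has room to spare).

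The main obstacle I expect is the careful bookkeeping in the first paragraph: making rigorous the passage from ``cylinder $C$'' to ``Rohlin tower by intervals with base $J_n\subset I$ and height $h_n$,'' in particular (a) checking that $h_n$ is well defined and essentially constant on $J_n$ (orbits entering $C$ near its boundary exit after slightly different numbers of steps, so one must shrink $J_n$ to the orbits that traverse the full width, losing only $o(1)$ measure), (b) verifying that the $T^iJ_n$ are not chopped by separatrices — this uses that $C$ contains no singularities and that $I$'s endpoints lie on separatrices, so the only way an iterate could be cut is by hitting $\partial I$, which is handled by the disjointness of the $h_n$ intersections of $C$ with $I$ — and (c) relating $b_n-a_n$ to $a(C_n)/\ell(C_n)$, which requires knowing that $J_n$ really is a single interval of the claimed length (it is the transverse cross-section of the cylinder restricted to $I$, whose $I$-length is $a(C_n)/\ell(C_n)$ divided by $\sin$ of the angle between $I$ and $\theta_{C_n}$, again a bounded factor). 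None of these steps is deep, but they are the ones where the geometry of translation surfaces is genuinely used; once they are in place, conditions $(GR1)$ and $(GR2)$ follow from the elementary estimates above.
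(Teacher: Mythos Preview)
Your plan matches the paper's approach almost exactly: take a connected component $(\overline a_n,\overline b_n)$ of $C_n\cap I$, trim it by $\epsilon_n=1/(\ell(C_n)\log\ell(C_n))$ on each side to obtain $J_n$, let $h_n$ be the number of such components, and bound the return displacement by $\ell(C_n)\,|\sin(\theta_{C_n}-\tfrac\pi2)|\le\epsilon_n$. Your obstacle (b) is handled in the paper by a preliminary remark which trims the cylinders slightly so that the endpoints of $I$ fall outside them, and your obstacles (a) and (c) are resolved exactly along the lines you indicate.

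The one place where your sketch needs correction is the argument for $(GR1)$. You write that ``the measure of $\cC_n$ is, by construction, essentially the area of the part of $C$ swept by full orbit segments,'' but $\lambda(\cC_n)$ is a one-dimensional quantity while the swept region is two-dimensional; they are related by the (nonconstant, not equal to one) return-time function, not by equality, so $a(F_n)\to 1$ does not by itself yield $\lambda(\cC_n)\to|I|$. The paper fills this gap as follows: set $F_n=\bigcup_{0\le t<\ell(C_n)\sin\theta_{C_n}}h_t(J_n)$, note that $\cC_n=p_I(F_n)$ where $p_I:M\to I$ is the projection along the backward vertical flow, and use that the minimum first-return time to $I$ is bounded below by some $c>0$. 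This gives $c\,\lambda\big(p_I(M\setminus F_n)\big)\le a(M\setminus F_n)\to 0$, and hence $\lambda(\cC_n)=|I|-\lambda\big(p_I(M\setminus F_n)\big)\to|I|$. You should replace your area-comparison sentence with this projection argument.
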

The idea behind  Lemma~\ref{lem:rigidity_from_cyl} is simply that towers for the IET can be essentially obtained  intersecting the cylinders $C_n$ with the   Poincar{\'e} section. Before we prove Lemma~\ref{lem:rigidity_from_cyl}, we make the following remark which simplifies the analysis.
\begin{rem}\label{rem:corcyl}
We will without loss of generality assume that the endpoints of $I=[a,b]$ do not belong to ${C}_n$ for every $n\in\N$. Indeed, assume that $a\in C_n$. 
By definition of $I$, $a=h_{-{s}}(\sigma)$, for a singularity $\sigma\in M$ and $|{s}|<C$ (where $C>0$ is a constant independent on $n$, chosen to be an upper bound for backward and forward first return times of singularities to the section).
Since $h_{s}(a)=\sigma\notin C_n$, we find ${s'}$ between $0$ and ${s}$ such that $h_{{s}'}(a)\in\partial C_n$ and $|{s}'|$ is the smallest
positive real number with such property.
Choose $v\in \partial C_n$ so that the triangle with vertices $h_{{s}'}(a)$,  $v$, $a$ is right ($v$ is its right angle vertex) and contained in $\overline{C_n}$. Then $\tfrac{d(a,v)}{|{s}'|}=|\sin(\pi/2-\theta_{C_n})|$.
It follows that
\[
d(a,\partial C_n)\leq d(a,v)=|{s}'||\sin(\pi/2-\theta_{C_n})|\leq |{s}||\pi/2-\theta_{C_n}|\leq  \frac{C}{\ell(C_n)^2\log(\ell(C_n))}.
\]
 Analogous estimates hold for the other endpoint. If we define \emph{trimmed} cylinders $(C'_n)_{n\in\mathbb{N}}$ given by:
\[
C'_n:=C_n\setminus \Big\{x\in C_n\;:\; d(x,\partial C_n)\leq \frac{C}{\ell(C_n)^2\log(\ell(C_n))}\Big\},
\]
then the discarded set has measure at most $\frac{2C}{\ell(C_n)\log(\ell(C_n))}$ and therefore we also have that $a(C'_n)\to 1$ as $n$ grows. Since $\theta_{C'_n}=\theta_{C_n}$ and $\ell(C'_n)=\ell(C_n)$, the sequence of cylinders $(C'_n)_{n \in \mathbb{N}}$ also satisfies  \eqref{eq:app_cyl}.  Therefore we can replace the sequence $(C_n)_{n\in\mathbb{N}}$ with the sequence of cylinders $(C'_n)_{n\in\mathbb{N}}$,  which by construction do not contain the endpoints of $I$.
\end{rem}
With the above remark we can now prove Lemma~\ref{lem:rigidity_from_cyl}.
\begin{proof}[Proof of Lemma~\ref{lem:rigidity_from_cyl}]
Let $(C_n)_{n \in \mathbb{N}}$ be the sequence of cylinders satisfying the assumption of  Lemma~\ref{lem:rigidity_from_cyl} and let $I$ denote, by abusing the notation, also the horizontal interval on $M$ which gives the Poincar{\'e} section determining $T$. Let $(\overline{a}_n,\overline{b}_n)\subset I$ be one of  connected components  of the intersection of $C_n$ with $I$ 
and set $h_n$ to be the number of connected components in $I \cap C_n$.
In view of Remark~\ref{rem:corcyl}, all these connected components are horizontal intervals of length $\overline{b}_n-\overline{a}_n$
 whose endpoints both lie on $\partial C_n$.
The images of the intervals $(\overline{a}_n,\overline{b}_n)$ under the vertical flow (and hence the successive intersections of  $C_n$ with $I$) are not necessarily disjoint, but since the vertical flow is close  by \eqref{eq:app_cyl} to the direction $\theta_{C_n}$ of the cylinder, to obtain the base of a tower it is sufficient to \emph{trim} the interval as follows. Let  $J_n:=({a}_n,{b}_n)$ to be the smaller interval given  by
\[a_n:= \overline{a}_n+ \epsilon_n, \qquad b_n:= \overline{b}_n- \epsilon_n ,\qquad \text{where} \ \ \epsilon_n:= \frac{1}{\ell({C_n}) \log(\ell({C_n}))}. \]
Then, by \eqref{eq:app_cyl}  and elementary trigonometry (see Figure \ref{fig:cylinder}),  we have that the symmetric difference of $T^{h_n}J_n$ and $J_n$ has length
\[
\ell({C_n}) \big|\sin \left(  \theta_{C_n}-\tfrac \pi 2 \right)\big|\leq \ell({C_n}) \vert \theta_{C_n}-\tfrac \pi 2\vert \leq \frac{1}{\ell({C_n})  \log(\ell({C_n}))} =\epsilon_n.
\]
Note first that by Remark~\ref{rem:corcyl} it follows that the sets $T^h(J_n)$ for $0\leq h <h_n$ are intervals. Moreover, they are pairwise disjoint and we also have that  $\vert T^{h_n}(x) - x \vert \leq \epsilon_n$ for any $x$ from these intervals. Setting
$(\cC_n)_{n \in \mathbb{N}}$ to be given by $\cC_n:=\cup_{h=0}^{h_n-1}T^h (J_n)$, we obtain towers which satisfy $(GR2)$ from the Definition~\ref{def:IETrecurrence}. To finish the proof it is enough to show that the towers $(\cC_n)_{n \in \mathbb{N}}$ satisfy $(GR1)$.  Consider the subsurface $F_n\subset S$ (obtained flowing $J_n$) given by  $F_n:=\bigcup_{0\leq t<\sin(\theta_{C_n})\ell(C_n)}h_t(J_n)$ (the flowing time $\sin(\theta_{C_n})\ell(C_n)$ is here the smallest first return time of points in $J_n$ to $J_n$, cf.~Figure~\ref{saddles2}). Denote by $p_I:M\to I$ the projection along the vertical flow of $M$ on $I$ defined by setting $p_I(x)$ to be the first
meeting point of the backward orbit of $x$ under $(h_t)_{t\in\R}$ with $I$.
Then  $\cC_n=p_I(F_n)$. Moreover, by the bound on $\theta_{C_n}$,
\begin{align*}
a(F_n)&=|J_n|\cdot \ell(C_n)\sin(\theta_{C_n})=(\bar{b}_n-\bar{a}_n-2\epsilon_n)\ell(C_n)\sin(\theta_{C_n})\\
&= (\bar{b}_n-\bar{a}_n)\ell(C_n)\sin(\theta_{C_n})- 2\epsilon_n\ell(C_n)\sin(\theta_{C_n})\geq a(C_n)-\frac{2}{\log(\ell(C_n))}\to 1.
\end{align*}
Then, if  $c>0$ denotes the minimum of all backward first return times of points from $I$ to $I $ and recall that $|\cdot|$ denotes the Lebesgue measure of a (measurable) subset of $I$, we have
\[
c\left|p_I(M\setminus F_n)\right|\leq a(M\setminus F_n)\to 0.
\]
 Therefore $\left|p_I(M\setminus F_n)\right|\to 0$. Consequently
\[
|\cC_n|=|p_I(F_n)|=|I|-\left|p_I(M\setminus F_n)\right|\to |I|.
\]
This finishes the proof of $(GR1)$.
\end{proof}

 \begin{figure}[h!]
  \subfigure[\label{fig:cylinder} The cylinder in the proof of Lemma \ref{lem:rigidity_from_cyl}]{
  \includegraphics[width=0.5\textwidth]{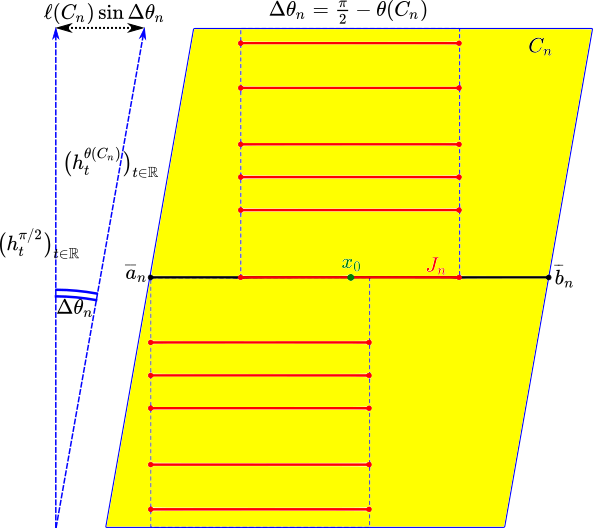}	}
\subfigure[$x=T^hy$ in Step 2 of the proof of Proposition~\ref{prop:an}\label{fig:tower}]{
\includegraphics[width=0.4\textwidth]{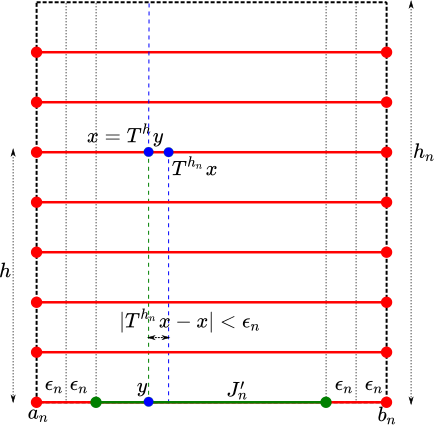}}	
 \caption{Auxiliary figures for the proofs of Lemma \ref{lem:rigidity_from_cyl} and Proposition~\ref{prop:an}.\label{saddles2}}
\end{figure}

We can now prove Theorem~\ref{thm:ss_sf}.  Let us first outline the strategy of the proof.
Singularity of the spectrum for special flows satisfying the assumptions of Theorem~\ref{thm:ss_sf} will be deduced  from an application of Proposition~\ref{prop:an}. Hence  we only need to verify that the assumptions (good rigidity and location of zeros of the derivatives) hold for almost every $T$ with permutation $\pi$.  In  {\it Part 1}, \emph{good rigidity} is deduced from Proposition~\ref{prop:cyl} via Lemma~\ref{lem:rigidity_from_cyl}.  In {\it Part 2}, the assumption \eqref{eq:locationzero} on the \emph{location of zeros} of the derivative is proven exploiting the symmetry of the function and the cancellation phenomena described in \S\ref{sec:symmetry} (via Corollary~\ref{cor:x0}).

\begin{proof}[Proof of Theorem~\ref{thm:ss_sf}.]

\noindent {\it Part 1.}  Let $d\geq 2$ and consider the symmetric permutation $\pi$ on $d$ symbols.
Consider translation surfaces (for example obtained by choosing suspension data, see \cite{Yo}) which has an IET $T$, with permutation $\pi$ as a Poincar\'{e} section. 
  Any such translation surface $(M, \omega)$ belongs to the  stratum $\mathcal{H}=\mathcal{H}(2g-2)$ where $g=d/2$ if $d$ is even, or to $\mathcal{H}=\mathcal{H}(g-1,g-1)$
where $g=(d-1)/2$ if $d$ is odd.
By Proposition~\ref{prop:cyl},  for almost every translation surface in (any connected component of)  the stratum $\mathcal{H}(2g-2)$ or $\mathcal{H}(g-1,g-1)$,
 the vertical flow is well approximated by single cylinders in the sense of Proposition~\ref{prop:cyl}.
  Furthermore, since Proposition~\ref{prop:cyl} holds for every $\epsilon>0$, taking a sequence $\epsilon_n\to 0$ and using a diagonal argument, we also have that  for a full measure set $\mathscr{F}_\mathcal{H}$ of  translation surface  in either strata there exists a sequence of cylinders $(C_n)_{n\in \mathbb{N}}$ with  $a(C_n)\to 1$ and satisfying \eqref{eq:app_cyl}.  We can also assume that for every surface $(M,\omega)$ in $\mathscr{F}_\mathcal{H}$ the corresponding horizontal flow has no saddle connection.

By standard arguments (using Fubini theorem and the local product structure of the Masur-Veech measure on translation surfaces), we hence get a  full measure set $\mathscr{F}_{\pi}$ of IETs with permutation $\pi$ (those which arise as Poincar{\'e} sections of the vertical flow on surfaces in $\mathscr{F}_\mathcal{H}$) that, by Lemma~\ref{lem:rigidity_from_cyl}, 
admit a good rigidity sequence (in the sense of Definition~\ref{def:IETrecurrence}).


\smallskip
 \noindent  {\it Part 2.}
Given $T\in \mathscr{F}_{\pi}$, let $(M, \omega)$ be a translation surface in $ \mathscr{F}_\mathcal{H}$ of which $I$ is a horizontal section.
Let $\iota: M \to M$ denote the hyperelliptic involution on $(M, \omega)$  (see \S\ref{sec:reduction}).
{ Since $\pi$ is symmetric,} the midpoint $x_0=|I|/2$ of $T$ is fixed by $\iota$ (i.e.\ is a Weierstrass point). Let  $(C_n)_{n\in \mathbb{N}}$ be  the sequence of cylinders on $(M,\omega)$  satisfying \eqref{eq:app_cyl} and $a({C_n})\to 1  $.  We can assume that each cylinder $C_n$ is maximal. Since, for  $n$ sufficiently large, $a({C_n})>1/2$,  we must have $\iota(C_n) = C_n$. { Indeed, since $\iota$ maps cylinders into cylinders and preserves area, $\iota(C_n)$ is a cylinder intersecting $C_n$ (as $a(\iota(C_n))>1/2$ and $a(C_n)>1/2$).
Therefore $C_n\cup \iota(C_n)$ is also a cylinder. By the maximality of $C_n$, it follows that  $\iota(C_n) = C_n$.}
This implies that there is a Weierstrass point (actually exactly two) on the core curve of $C_n$. We claim that, without loss of generality, we can assume that for all $n \in \N$ this Weierstrass point is the mid-point $x_0$ of $I$. (Indeed, if it is not, we can replace the section $I$ with another symmetric section centered at the given Weiestrass point; then { by Remarks~\ref{rem:invertcond} and \ref{rem:symcond} (applied to $f=f_p$)} this new section yields a special flow which also satisfies the assumption of Theorem~\ref{thm:ss_sf} and both special flows have the same spectral properties since they are both metrically isomorphic to the same surface flow.)

We now claim that the sequence of good rigidity towers given by Lemma~\ref{lem:rigidity_from_cyl} can be choosen so that
\begin{equation}\label{goodx0andreturn}
x_0 \in [a_n+2c/q_n,b_n-2c/q_n], \qquad  T^{-h_n}x_0\in [a_n+c/q_n,b_n-c/q_n]\;\text{ for some}\;0<c<1/4.
\end{equation}
Indeed, since the midpoint  $x_0$ of $I$ belongs to the core curve of $C_n$ for every $n$, { we can choose $J_n=[{a}_n,{b}_n]\subset I$
such that $({a}_n-\ep_n,{b}_n+\ep_n)$ is the unique connected component of the intersection $I$ with $C_n$ that contains $x_0$
(see the proof of Lemma~\ref{lem:rigidity_from_cyl}). Then $x_0=({a}_n + {b}_n)/2$.} Fix $0<c<1/4$ so that, for every $n$ sufficiently large, $\ep_n\leq c/q_n$. Then, $x_0\in[a_n+2c/q_n,b_n-2c/q_n] \subset [a_n,b_n] $.
Consider now $T^{-h_n}x_0$. Since  $|T^{-h_n}x_0-x_0|\leq \ep_n\leq c/q_n$
and $x_0\in [a_n+2c/q_n,b_n-2c/q_n]$, we have that  also $T^{-h_n}x_0\in [a_n+c/q_n,b_n-c/q_n]$, which concludes the proof of \eqref{goodx0andreturn}. 

From  \eqref{goodx0andreturn}, since $x_0, T^{-h_n}x_0\in [a_n+c/q_n,b_n-c/q_n]\subset (a_n,b_n)$ which by assumption is a continuity interval for $S_{h_n}(f)$ (see the remark after the Definition~\ref{def:tower}),  
we deduce by Corollary \ref{cor:x0} that there exists  $x_n\in [a_n+c/q_n,b_n-c/q_n]$ such that $S_{h_n}(f')(x_n)=0$.
This shows that all assumptions of Proposition~\ref{prop:an} hold for the special flow $(T^f_t)_{t \in \R}$  over $T\in \mathscr{F}_{\pi} $ and hence (by Proposition~\ref{prop:an}), that $(T^f_t)_{t \in \R}$ has purely singular spectrum.
\end{proof}

\begin{proof}[Proof of Theorem~\ref{thm:singsp}.]
Any $(\varphi_t)_{t\in \mathbb{R}}$ locally Hamiltonian flow with two simple isomorphic saddles on $M$ of genus two, by Corollary \ref{cor:reduction}, is metrically isomorphic to a special flow over $T$ over a symmtric $\pi$ (given by $\pi(i)=4-i$, $0\leq i <5$) under $f\in  \SymLog{\sqcup_{i=0}^{4}I_i}$ (and hence has the same ergodic and spectral properties). By Theorem~\ref{thm:ss_sf}, for almost every choice of the lengths $\beta_{i+1}-\beta_i$ of $T$, such special flow has purely singular spectrum. This, by Remark~\ref{rem:fullmeasure}, implies singularity of the spectrum for a full measure set of locally Hamiltonian flows in in the isomorphic saddles locus $\mathcal{K}$ with respect to the Katok fundamental class (defined in \S\ref{sec:isomorphic}).
\end{proof}

\section{Translation surfaces well approximated by single cylinders}\label{sec:cylinder}

In this section we will prove the results on the abundance of single cylinders in translation surfaces stated in \S\ref{sec:main_cyl}, namely Theorem~\ref{thm:Kcyl} and Proposition~\ref{prop:cyl}. Let us first show how Proposition~\ref{prop:cyl} follows from Theorem~\ref{thm:Kcyl}.

\begin{proof}[Proof of Proposition~\ref{prop:cyl}] Let $\psi(t)=\frac{1}{t^2\log t}$. Then $\psi$ satisfies the assumptions of Theorem~\ref{thm:Kcyl}.  Moreover let  $\epsilon_n=\frac{1}{n}$. Notice that a.e.\ $(M,\omega)$ belongs to the intersection of the full measure sets coming from Theorem~\ref{thm:Kcyl} (intersection over the $(\epsilon_n)_{n\in \N}$). It remains to notice that every such $(M,\omega)$ satisfies the assertion of Proposition~\ref{prop:cyl}.
\end{proof}

The rest of this section is devoted to the proof of Theorem~\ref{thm:Kcyl}. From now on we will constantly assume that $0<\epsilon <1/2$. For every $\theta\in S^1$ and $r>0$ let $B(\theta,r)=\{\phi\in S^1:\|\phi-\theta\|<r\}$. Let $\mathcal{C}$ be a connected component in the moduli space of area one translation surfaces. Theorem~\ref{thm:Kcyl} will be a consequence of the following result.

\begin{prop} \label{thm:approx}
{Let $\psi:\mathbb{R}^+ \to \mathbb{R}^+$ be non-increasing so that $t\psi(t)\leq 1$ for $t$ large enough and $\int_1^{+\infty} t \psi(t)=\infty$.}
 For every $0<\epsilon<1/2$ there exists $0<c\leq 1$ such that a.e.\ $(M,\omega)\in\mathcal{C}$ and every interval $J\subset S^1$  satisfies
\begin{equation}\label{eq:BC}
cT^2 \lambda(J)<\#\{C\in {\mathcal{C}yl}^\epsilon_\omega:\ell(C)\leq T\text{ and }\theta_C \in J\} \text{ for all  }T\geq T_{\omega,J},
\end{equation}
for some $T_{\omega,J}>0$. Moreover, if \eqref{eq:BC} holds, then for $T\geq \max(T_{\omega, J}, 36/(c\lambda(J)))$, we have
\[\lambda\Big(\bigcup_{\{C\in {\mathcal{C}yl}^\epsilon_\omega:\ell(C)\geq T\}}B(\theta_C, \psi(\ell(C)))\cap J\Big)\geq\frac c 9\lambda(J).\]
\end{prop}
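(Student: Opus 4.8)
The counting bound \eqref{eq:BC} I would obtain by running the Siegel--Veech machinery of \cite{Ch,Ma-Tr-We}. Fix $0<\epsilon<1/2$ and let $\Lambda^\epsilon_\omega\subset\mathbb R^2$ be the set of holonomy vectors $\ell(C)e^{i\theta_C}$ of cylinders $C\in{\mathcal{C}yl}^\epsilon_\omega$. For a window function $f$ supported in a small box, the Siegel--Veech transform $\widehat f(M,\omega)=\sum_{v\in\Lambda^\epsilon_\omega}f(v)$ satisfies $\int_{\mathcal C}\widehat f\,d\nu_{\mathcal C}=c_1(\mathcal C)\int_{\mathbb R^2}f$ with $c_1(\mathcal C)>0$, since every component of every stratum contains translation surfaces which, outside a subset of area $<\epsilon$, consist of a single cylinder (persistence of cylinders then makes the relevant event have positive $\nu_{\mathcal C}$-measure). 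Feeding this into the equidistribution of circle averages of $g_t$-translates, or the pointwise ergodic theorem for the geodesic flow as used in \cite{Ch,Ma-Tr-We}, gives, for $\nu_{\mathcal C}$-a.e.\ $(M,\omega)$, that $\#\{C\in{\mathcal{C}yl}^\epsilon_\omega:\ell(C)\le T,\ \theta_C\in J\}\big/(T^2\lambda(J))$ converges as $T\to\infty$ to a positive multiple of $c_1(\mathcal C)$; choosing $c$ below this limit yields \eqref{eq:BC}. The subtle point, which is where \cite{Ch} does real work, is the \emph{uniformity}: promoting an a.e.-$T$, fixed-$J$ statement to one holding for all $T\ge T_{\omega,J}$ and all intervals $J$ simultaneously, which one does by monotonicity in $T$, approximation of $J$ by rational-endpoint intervals, and continuity. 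The same scheme also produces the companion \emph{upper} bound $\#\{C\in{\mathcal{C}yl}^\epsilon_\omega:\ell(C)\le L,\ \theta_C\in J'\}\le c_4L^2\lambda(J')+c_5$ for a.e.\ $(M,\omega)$, all $L$, and all intervals $J'$, the additive $c_5$ accounting for very short windows; this will be needed below.

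For the ``moreover'' clause the plan is a divergence Borel--Cantelli / second moment argument. Fix $(M,\omega)$ and $J$ for which both bounds above hold, and fix $T\ge\max(T_{\omega,J},36/(c\lambda(J)))$. For $m\in\mathbb N$ set $N_m(\phi):=\#\{C\in{\mathcal{C}yl}^\epsilon_\omega:\ T\le\ell(C)\le 2^mT,\ \|\theta_C-\phi\|<\psi(\ell(C))\}$. Since $\{\phi:N_m(\phi)\ge1\}$ is contained in $\bigcup_{\{C:\ell(C)\ge T\}}B(\theta_C,\psi(\ell(C)))$ for every $m$, it suffices to bound $\lambda(\{\phi\in J:N_m(\phi)\ge1\})$ from below uniformly in $m$, and by Cauchy--Schwarz $\lambda(\{N_m\ge1\}\cap J)\ge\big(\int_J N_m\,d\lambda\big)^2\big/\int_J N_m^2\,d\lambda$. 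The first moment equals $\sum_{C:\,T\le\ell(C)\le 2^mT,\ \theta_C\in J}\lambda(\{\phi\in J:\|\theta_C-\phi\|<\psi(\ell(C))\})$; since every ball has radius $\psi(\ell(C))\le\psi(T)\le 1/T\le c\lambda(J)/36$, up to a harmless enlargement of $J$ this is $\asymp\sum_C\psi(\ell(C))$, and splitting by dyadic length bands, applying \eqref{eq:BC} to the partial counts $\#\{\ell(C)\le 2^{k+1}T,\ \theta_C\in J\}$, Abel summation and $\psi$ non-increasing gives $\int_J N_m\,d\lambda\gtrsim c\lambda(J)\int_T^{2^mT}t\psi(t)\,dt$, which $\to\infty$ as $m\to\infty$ by the divergence hypothesis $\int_1^{+\infty}t\psi(t)\,dt=\infty$.

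For the second moment, $\int_J N_m^2\,d\lambda=\int_J N_m\,d\lambda+\sum_{C\ne C'}\lambda\big(B(\theta_C,\psi(\ell(C)))\cap B(\theta_{C'},\psi(\ell(C')))\cap J\big)$. I would bound each overlap, assuming $\ell(C')\ge\ell(C)$, by $\lambda(B(\theta_{C'},\psi(\ell(C'))))\asymp\psi(\ell(C'))$ times the indicator of $\|\theta_C-\theta_{C'}\|\le 2\psi(\ell(C))$, then for fixed $C$ count the admissible $C'$ of each dyadic length using the localized \emph{upper} bound with window of $\lambda$-measure $\asymp\psi(\ell(C))$ centered at $\theta_C$; summing the resulting series in the length scale yields the quasi-independence estimate $\sum_{C'}\lambda(B_C\cap B_{C'})\lesssim\psi(\ell(C))\big(1+\lambda(J)^{-1}\int_J N_m\,d\lambda\big)$, whence $\int_J N_m^2\,d\lambda\lesssim\int_J N_m\,d\lambda+\lambda(J)^{-1}\big(\int_J N_m\,d\lambda\big)^2$, with constants governed by the Siegel--Veech constants and the lower-order errors (in particular the $c_5$-term) dominated precisely because $T\ge 36/(c\lambda(J))$. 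Inserting these two moment estimates into Cauchy--Schwarz and letting $m\to\infty$, so that $\int_J N_m\,d\lambda\to\infty$, forces $\lambda(\{N_m\ge1\}\cap J)\gtrsim\lambda(J)$; a careful bookkeeping of the constants then produces exactly the claimed bound $\ge\tfrac c9\lambda(J)$.

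The hard part is twofold. For \eqref{eq:BC} it is the uniformity over all large $T$ and all intervals $J$ at once: the underlying ergodic theorem only delivers a.e.-$T$, fixed-$J$ information, and the passage to a uniform conclusion with a uniform constant $c$ is the technical heart, which is exactly what \cite{Ch,Ma-Tr-We} supply. For the ``moreover'' clause the crux is the off-diagonal term in the second moment --- controlling how much distinct cylinder-balls can overlap --- which requires knowing the localized Siegel--Veech upper bound $\#\{C:\ell(C)\le L,\ \theta_C\in J'\}\le c_4L^2\lambda(J')+c_5$ with genuine uniformity in the (possibly very small) window $J'$; it is precisely this additive error $c_5$, together with the need to render it negligible, that is responsible for the quantitative threshold $T\ge 36/(c\lambda(J))$ appearing in the statement.
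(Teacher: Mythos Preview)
Your plan for \eqref{eq:BC} is essentially what the paper does, though more simply: it cites Vorobets' equidistribution theorem, which already gives the asymptotic $\#\{C\in{\mathcal{C}yl}_\omega:\ell(C)\le T,\ \theta_C\in J,\ a(C)\in I\}\sim c_1(\mathcal C)\lambda(J)|I|^{m_{\mathcal C}-1}T^2$ for a.e.\ $(M,\omega)$ and \emph{all} intervals $J,I$ simultaneously, so the uniformity you worry about is absorbed into that reference.

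For the ``moreover'' clause your route is genuinely different. The paper does \emph{not} run a second-moment/Cauchy--Schwarz argument. Instead it exploits a purely geometric fact you do not mention: two cylinders of area $>1/2$ must share a point, and hence (by comparing circumference and crossing time) their directions satisfy $\|\theta_C-\theta_{C'}\|\ge \max\{a(C),a(C')\}/(\ell(C)\ell(C'))$. This immediately gives both the upper count $\#\{C\in{\mathcal{C}yl}^\epsilon_\omega:\ell(C)\le T,\ \theta_C\in J\}\le 2T^2\lambda(J)+1$ \emph{and} the near-disjointness of the balls $B(\theta_C,\cdot)$ --- no Siegel--Veech needed for the upper bound. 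The paper then argues iteratively: fix $\sigma=18/\sqrt c$, and show (Lemma~\ref{lem:add more}) that if $\lambda\big(\bigcup_{T\le\ell(C)\le L}B(\theta_C,\psi(\ell(C)))\cap J\big)<\tfrac{c}{9}\lambda(J)$, then the cylinders with $L\le\ell(C)\le\sigma L$ contribute at least $\min\{(\sigma L)^2\psi(\sigma L),1\}\tfrac{c}{4}\lambda(J)$ of \emph{fresh} measure; since $\sum_k\sigma^{2k}\psi(\sigma^k)=\infty$, this cannot persist for all $k$.

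Your Cauchy--Schwarz scheme is a valid alternative and would yield \emph{some} positive multiple of $\lambda(J)$; the specific $c/9$ is an artifact of the paper's bookkeeping (the choice $\sigma=18/\sqrt c$), not canonical. The main thing to correct in your write-up is the source of the localized upper bound: it comes from the area-$>1/2$ separation, not from Siegel--Veech, and this is what makes the additive error exactly $+1$ and the constants explicit. The paper's iterative argument trades your quasi-independence computation for this separation, which is arguably cleaner here since it avoids tracking the double sum over pairs $(C,C')$.
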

The above result can be proved by a modification of the methods of \cite{Ch} and \cite{Ma-Tr-We}. We will present a full proof for completeness in \S\ref{sub:new}.

\smallskip
 Let us now  show how it implies Theorem~\ref{thm:Kcyl}.

\begin{proof}[Proof of Theorem~\ref{thm:Kcyl}]
Let
\[W^\psi_{\omega,m}:=\bigcup_{\{C\in {\mathcal{C}yl}^\epsilon_\omega:\ell(C)\geq m\}}B(\theta_C,\psi(\ell(C))).\]
Then the sequence of sets $(W^\psi_{\omega,m})_{m\geq 1}$ is  non-increasing with $\bigcap_{m\geq 1}W^\psi_{\omega,m}=W^\psi_{\omega}$. To proof
\eqref{eq:set} we need to show
that for a.e.\ $(M,\omega)\in \mathcal{C}$ and every $m\geq 1$ the set $W^\psi_{\omega,m}\subset S^1$ has full measure.

In view of Proposition~\ref{thm:approx}, for a.e.\ $(M,\omega)\in \mathcal{C}$ and any interval $J\subset S^1$
we have
\begin{equation}\label{eq:low}
\frac{\lambda\big(W^\psi_{\omega,m}\cap J\big)}{\lambda(J)}\geq\frac{c}{9}\text{ if }m\geq T_{\omega,J}\text{ and }{m\geq 36/(c\lambda(J))}.
\end{equation}
Take any translation surface $(M,\omega)$ satisfying  the above condition and suppose, contrary to our claim, that for some $m_0\geq 1$
the set $W^\psi_{\omega,m_0}\subset S^1$ does not have full measure. By the Lebesgue density theorem there exists an interval
$J\subset S^1$ such that
\[\frac{\lambda\big(W^\psi_{\omega,m}\cap J\big)}{\lambda(J)}\leq \frac{\lambda\big(W^\psi_{\omega,m_0}\cap J\big)}{\lambda(J)}<\frac{c}{9}
\text{ for all }m\geq m_0,\]
contrary to \eqref{eq:low}. This gives \eqref{eq:set}.

Denote by $A\subset \mathcal{C}$ the set of translation sufaces $(M,\omega)\in \mathcal{C}$ for which there exists a sequence $(C_i)_{i\geq 1}$ in
${\mathcal{C}yl}_\omega^\epsilon$ such that $\ell(C_i)\to+\infty$ as $i\to+\infty$ and $\|\theta_{C_i}-\frac{\pi}{2}\|<\psi(\ell(C_i))$ for all $i\geq 1$.
In view of \eqref{eq:set} there exists a subset $A'\subset \mathcal{C}$ with $\nu_C(A')=1$ such that if $(M,\omega)\in A'$ then for every $\phi\in W^{\psi}_\omega$ we have
\[r_{\pi/2-\phi}\omega\in A\quad\text{and}\quad\lambda(W^{\psi}_\omega)=1.\]
Let us consider the continuous map
\[\Delta:S^1\times \mathcal{C}\to \mathcal{C},\quad \Delta(\theta,\omega)=r_{\pi/2-\theta}\omega.\]
By Fubini's theorem and the invariance of $\nu_{\mathcal C}$ under the action of rotations $(r_\theta)_{\theta\in S^1}$, we have $\Delta_*(\lambda\times\nu_{\mathcal C})=\nu_{\mathcal C}$. Moreover,
\[\bigcup_{\omega\in A'}(W_{\omega}^\psi\times\{\omega\})\subset\Delta^{-1}(A).\]
Using again Fubini's theorem, we obtain
\[1=(\lambda\times\nu_{\mathcal C})(\Delta^{-1}(A))=\nu_{\mathcal C}(A),\]
which completes the proof.
\end{proof}

\subsection{Proof of Proposition \ref{thm:approx}}\label{sub:new}
The first part of Proposition \ref{thm:approx} (i.e.~\eqref{eq:BC}) is an immediate consequence of the following result, which follows by Theorem~1.9 in \cite{Vor}:
\begin{thm}
For a.e.\ translation surface $(M,\omega)\in\mathcal{C}$ and all intervals $J\subset S^1$, $I\subset [0,1]$ we have
\[\lim_{T\to+\infty}\frac{\#\{C\in{\mathcal{C}yl}_\omega: \ell(C)\leq T,\theta_C\in J,a(C)\in I\}}{T^2}=c_1(\mathcal{C})\lambda(J)|I|^{m_{\mathcal{C}}-1}.\]
\end{thm}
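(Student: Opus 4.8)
The plan is to place this statement in the standard Siegel--Veech counting framework: the quantity is a refined cylinder count, keeping track of the holonomy \emph{and} the area of the core curve, so its asymptotics should follow, as for all such counts, from the integral (Siegel--Veech) formula together with equidistribution of expanding circles under the geodesic flow. Concretely, first I would introduce, for $\phi\in C_c(\R^2)$ and a bounded measurable $\rho:[0,1]\to\R$, the weighted Siegel--Veech transform
\[
\widehat{\phi\otimes\rho}\,(\omega):=\sum_{C\in\mathcal{C}yl_\omega}\phi\big(\mathrm{hol}(\gamma(C))\big)\,\rho\big(a(C)\big),
\]
where $\mathrm{hol}(\gamma(C))\in\R^2$ is the holonomy of the core curve, so that $|\mathrm{hol}(\gamma(C))|=\ell(C)$ and $\arg\mathrm{hol}(\gamma(C))=\theta_C$. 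The first task is the corresponding integral formula: there exist a constant $c_1(\mathcal{C})$ --- the cylinder Siegel--Veech constant of \cite{Es-Ma} --- and an \emph{area Siegel--Veech measure} $\mu_{\mathcal{C}}$ on $[0,1]$ such that
\[
\int_{\mathcal{C}}\widehat{\phi\otimes\rho}\;d\nu_{\mathcal{C}}=c_1(\mathcal{C})\Big(\int_{\R^2}\phi\,dx\,dy\Big)\Big(\int_{0}^{1}\rho\,d\mu_{\mathcal{C}}\Big).
\]
The product structure is a local computation in period coordinates: near a surface carrying a given cylinder $C$, the holonomy of $\gamma(C)$ is one of the period coordinates and moves independently of the coordinates describing $M\setminus C$; integrating out the geometry of the complement of a cylinder of area $a$ subject to $a(M)=1$ produces a power-law factor in the area of the complement whose exponent is governed by $m_{\mathcal{C}}$, and this is exactly where $m_{\mathcal{C}}$ enters. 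One finds $\mu_{\mathcal{C}}$ absolutely continuous, with $\mu_{\mathcal{C}}(I)=|I|^{m_{\mathcal{C}}-1}$ for the intervals $I$ adjacent to the right endpoint $1$ that are used later in the paper (the general value being a polynomial in the endpoints of $I$); the exact density is the one behind the description of the principal boundary of strata and is computed in \cite{Vor}.

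Next I would pass from the integral formula to a pointwise asymptotic for $N(\omega,T):=\#\{C\in\mathcal{C}yl_\omega:\ell(C)\le T,\ \theta_C\in J,\ a(C)\in I\}$. Up to boundary effects, $N(\omega,T)=\widehat{\chi_{B^J_T}\otimes\chi_I}(\omega)$ with $B^J_T=\{v:|v|\le T,\ \arg v\in J\}$ a truncated sector. Using the Eskin--Masur device one rewrites the count in the sector as an average of the \emph{untruncated} transform of a fixed sector-shaped test function along the circle $\{r_\theta\omega\}$ pushed forward by the geodesic flow $g_t=\mathrm{diag}(e^t,e^{-t})$, thereby reducing the asymptotics of $T^{-2}N(\omega,T)$ to the equidistribution statement
\[
\frac{1}{2\pi}\int_{0}^{2\pi}\widehat{F}\big(g_{\log T}\,r_\theta\,\omega\big)\,d\theta\ \xrightarrow[T\to\infty]{}\ \int_{\mathcal{C}}\widehat{F}\;d\nu_{\mathcal{C}},
\]
for $\nu_{\mathcal{C}}$-a.e.\ $\omega$, applied to $F=\phi\otimes\rho$ with $\phi,\rho$ smooth. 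Combined with the formula of the previous step this yields $T^{-2}N(\omega,T)\to c_1(\mathcal{C})\,\lambda(J)\,\mu_{\mathcal{C}}(I)$; the non-continuity of $\chi_J$ and $\chi_I$ at the endpoints is handled by squeezing between smooth $\phi^\pm,\rho^\pm$ and using that Lebesgue measure and $\mu_{\mathcal{C}}$ are atomless, while the $o(T^2)$ contribution of cylinders of anomalously large area (not captured by a compactly supported $\phi$) is discarded using the $L^1$-integrability of the Siegel--Veech transform near the cusp. A single exceptional null set works for all $J,I$ by approximating over a countable family of intervals.

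The only genuinely hard point is the displayed circle-averaging limit, which does \emph{not} follow from ergodicity of the $SL(2,\R)$-action on $\mathcal{C}$ (Masur \cite{Ma:int}, Veech \cite{Ve:gau}) alone: this is precisely \cite[Theorem~1.9]{Vor}. It is proved by the Eskin--Masur scheme, combining $SL(2,\R)$-ergodicity of $\nu_{\mathcal{C}}$, quantitative control of the Siegel--Veech transform near the cusp (so that no mass escapes under the circle average), and a Dani--Margulis-type thickening/mixing argument --- or, alternatively, the spectral gap of the $SL(2,\R)$-representation together with a smoothing argument --- to upgrade ergodicity to equidistribution of expanding circle arcs. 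Since \cite[Theorem~1.9]{Vor} is already stated in the generality needed here (an arbitrary connected component $\mathcal{C}$, the count refined by both direction and area), the cleanest route is simply to invoke it and record its conclusion in the form used in the proof of Proposition~\ref{thm:approx}; if instead one insists on a self-contained argument, the passage from ergodicity to circle equidistribution is the technical heart, and is where the full weight of Teichm\"uller dynamics is felt.
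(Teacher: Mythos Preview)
The paper does not prove this theorem at all: it is stated as a direct consequence of Theorem~1.9 in \cite{Vor} and simply quoted. Your sketch unpacks the Siegel--Veech/Eskin--Masur machinery that underlies Vorobets's result and then, correctly, identifies \cite[Theorem~1.9]{Vor} as the decisive input; so in substance you end at the same place as the paper, only with considerably more commentary on what that citation contains.

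One observation worth keeping: you were right to hedge that $\mu_{\mathcal{C}}(I)=|I|^{m_{\mathcal{C}}-1}$ is the value for intervals $I$ of the form $[1-\epsilon,1]$ (those adjacent to the right endpoint), which are the only ones used downstream to derive \eqref{eq:BC}. The paper states the formula for \emph{all} intervals $I\subset[0,1]$, but $I\mapsto |I|^{m_{\mathcal{C}}-1}$ is not additive unless $m_{\mathcal{C}}=2$, so it cannot be the measure of a general interval; the correct area density from \cite{Vor} is a polynomial, and the displayed right-hand side is its value on $[1-\epsilon,1]$. This does not affect anything in the paper, since only that special case is invoked.
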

So it remains to prove the second part of Proposition \ref{thm:approx}.
For this we first state some additional lemmas.




\begin{lem}\label{lem:flow}
Let $(M,\omega)$ be any translation surface.
If $x\in M$ belongs to two different cylinders $C, C'\in {\mathcal{C}yl}_\omega$ then $\|\theta_{C}-\theta_{C'}\|\geq \frac{\max\{a(C),a(C')\}}{\ell(C)\ell(C')}$.
\end{lem}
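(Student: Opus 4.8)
The plan is to exploit the flat product structure of each cylinder. Recall that a cylinder $C$ of area $a(C)$ and circumference $\ell(C)$ is isometric to $(0,w(C))\times(\R/\ell(C)\Z)$, where the \emph{width} is $w(C)=a(C)/\ell(C)$, the periodic coordinate points in the direction $\theta_C$, and the transverse coordinate is orthogonal to it.

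First I would dispose of the parallel case. If $\theta_C$ and $\theta_{C'}$ define the same unoriented direction, i.e.\ $\|\theta_C-\theta_{C'}\|\in\{0,\pi\}$, then the leaf through the common point $x$ of the (unoriented) foliation in that direction is one and the same closed geodesic, lying inside $C$ and inside $C'$; since a cylinder is the unique maximal annulus of parallel closed geodesics containing any one of its leaves, this forces $C=C'$, contrary to hypothesis. Hence $\|\theta_C-\theta_{C'}\|\in(0,\pi)$, so that $\sin\|\theta_C-\theta_{C'}\|>0$, and of course $\sin\|\theta_C-\theta_{C'}\|\le\|\theta_C-\theta_{C'}\|$.

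The core of the argument is a crossing estimate. Let $\delta'$ be the leaf of $C'$ through $x$: a closed geodesic of length $\ell(C')$ in direction $\theta_{C'}$. Reading $\delta'$ in the flat coordinates $(u,v)\in(0,w(C))\times(\R/\ell(C)\Z)$ of $C$ near $x$, the transverse coordinate $u$ is affine in arclength along $\delta'$ with slope $\pm\sin\|\theta_C-\theta_{C'}\|$, hence strictly monotone. Now $\delta'$ cannot be entirely contained in $C$: a closed geodesic inside an open flat cylinder must be core-parallel, which the parallel case has ruled out. Therefore the maximal subarc of $\delta'$ through $x$ that stays in $C$ is a bounded arc with endpoints on $\partial C=\{u=0\}\cup\{u=w(C)\}$, and by strict monotonicity of $u$ one endpoint lies on $\{u=0\}$ and the other on $\{u=w(C)\}$; consequently this subarc has length exactly $w(C)/\sin\|\theta_C-\theta_{C'}\|$. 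Since the subarc sits inside $\delta'$, whose total length is $\ell(C')$, we get $w(C)/\sin\|\theta_C-\theta_{C'}\|\le\ell(C')$, which rearranges to
\[\|\theta_C-\theta_{C'}\|\;\ge\;\sin\|\theta_C-\theta_{C'}\|\;\ge\;\frac{w(C)}{\ell(C')}\;=\;\frac{a(C)}{\ell(C)\,\ell(C')}.\]
Interchanging the roles of $C$ and $C'$ (so that now the leaf of $C$ through $x$ crosses the cylinder $C'$) yields the same inequality with $a(C)$ replaced by $a(C')$, and taking the larger of the two lower bounds proves the lemma.

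I do not expect a real obstacle. The two points requiring a little care are: (i) justifying that the subarc of $\delta'$ through $x$ lying in $C$ is a genuine crossing from one boundary component to the other — which is exactly where one uses that $x$ is an interior point of $C$, that $u$ is strictly monotone, and that $\delta'\not\subset C$; and (ii) the bookkeeping relating the circle distance $\|\theta_C-\theta_{C'}\|$ to the slope $\pm\sin\|\theta_C-\theta_{C'}\|$ appearing in the transverse rate, for which the elementary inequalities above suffice.
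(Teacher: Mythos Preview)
Your argument is correct and is essentially the same as the paper's: both show that the closed geodesic through $x$ in direction $\theta_{C'}$ (equivalently, the periodic orbit of the $\theta_{C'}$-directional flow) must cross the full width $w(C)=a(C)/\ell(C)$ of $C$, which forces its length $\ell(C')$ to be at least $w(C)/\sin\|\theta_C-\theta_{C'}\|$. Your write-up is slightly more explicit in disposing of the parallel case and in invoking the $C\leftrightarrow C'$ symmetry to obtain the $\max$, but the geometric core is identical.
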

\begin{proof}
For every $\theta\in S^1$ denote by $(h^\theta_t)_{t\in\R}$ the directional translation flow on $(M,\omega)$ in direction $\theta$.
Notice that the  circumference of the cylinder $C$ is  $\frac{a(C)}{\ell(C)}$. Suppose that $x\in C$ is a periodic point for
$(h^\phi_t)_{t\in\R}$ for some $\phi\neq \theta_C$ and
$R>0$ is its minimal period, i.e.\ $h_s^{\phi}(x)=h_{s-R}^{\phi}(x)$ for all $s\in \R$. Choose $s\in \R$ so that $h_s^{\phi}(x)$ is just leaving the periodic cylinder $C$.
So $h^{\phi}_{s-t}(x)\in C$  for all $0<t<\frac{a(C)}{\ell(C)}|\csc(\theta_C-\phi)|$ and in particular
$h_{s-t}^{\phi}(x)\neq h^{\phi}_s(x)$.
Therefore  $R\geq \frac{a(C)}{\ell(C)}|\csc (\theta_C-\phi)|$ which implies that $\|\theta_C-\phi\| \geq \frac{a(C)}{R\ell(C)}$.
\end{proof}

\begin{cor}\label{cor:sep}
If $0<\epsilon <1/2$ then the set
\[\{\theta_C\in S^1:C \in{\mathcal{C}yl}^\epsilon_\omega\text{ such that }\ell(C)\leq T\}\]
is $\frac{1-\epsilon}{T^2}$ separated. In particular,  for any interval $J$ and $T>0$ we have
{\[\#\{C\in {\mathcal{C}yl}^\epsilon_\omega:\ell(C)\leq T\text{ and }\theta_C \in J\}\leq  2T^2 \lambda(J)+1.\]}
\end{cor}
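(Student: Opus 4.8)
The plan is to deduce both assertions of Corollary~\ref{cor:sep} directly from Lemma~\ref{lem:flow}, the point being that the hypothesis $\epsilon<1/2$ forces any two cylinders of area at least $1-\epsilon$ to overlap, so that Lemma~\ref{lem:flow} applies to them.

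\emph{Separation.} First I would observe that any two cylinders $C,C'\in{\mathcal{C}yl}^\epsilon_\omega$ must share a point: since $a(C),a(C')\geq 1-\epsilon$ and $\epsilon<1/2$, we have $a(C)+a(C')\geq 2(1-\epsilon)>1=a(M)$, so $C$ and $C'$ cannot be disjoint subsets of $M$, and we may pick $x\in C\cap C'$. If moreover $\ell(C),\ell(C')\leq T$, then Lemma~\ref{lem:flow} gives
\[
\|\theta_C-\theta_{C'}\|\geq\frac{\max\{a(C),a(C')\}}{\ell(C)\ell(C')}\geq\frac{1-\epsilon}{T^2}.
\]
In particular $\theta_C\neq\theta_{C'}$ as soon as $C\neq C'$, so the map $C\mapsto\theta_C$ is injective on $\{C\in{\mathcal{C}yl}^\epsilon_\omega:\ell(C)\leq T\}$ and its image is a $\frac{1-\epsilon}{T^2}$-separated subset of $S^1$. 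This is the first claim.

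\emph{Counting.} For the second claim I would invoke the elementary fact that a $\delta$-separated subset of an interval $J\subset S^1$ has at most $\lambda(J)/\delta+1$ elements (taking $\|\cdot\|$ and $\lambda$ normalised compatibly on $S^1$, so that $\lambda(J)$ is the length of $J$). Applying this to the injective image $\{\theta_C:C\in{\mathcal{C}yl}^\epsilon_\omega,\ \ell(C)\leq T,\ \theta_C\in J\}$ with $\delta=\frac{1-\epsilon}{T^2}$, and using $1-\epsilon>\tfrac12$, we obtain
\[
\#\{C\in{\mathcal{C}yl}^\epsilon_\omega:\ell(C)\leq T\text{ and }\theta_C\in J\}\ \leq\ \frac{\lambda(J)\,T^2}{1-\epsilon}+1\ \leq\ 2T^2\lambda(J)+1,
\]
which is exactly the stated bound.

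I do not expect any genuine obstacle here: the whole analytic content is packaged in Lemma~\ref{lem:flow}, and the only step requiring attention is the verification that distinct elements of ${\mathcal{C}yl}^\epsilon_\omega$ really do intersect — which is precisely where $\epsilon<1/2$ enters, through the area estimate $a(C)+a(C')>a(M)$ — together with the trivial bound for the cardinality of a separated set contained in an interval.
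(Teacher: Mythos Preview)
Your proof is correct and follows exactly the same approach as the paper: the paper's proof simply observes that cylinders of area greater than $\tfrac12$ must share a point and then invokes Lemma~\ref{lem:flow}, leaving the counting step implicit. You have faithfully filled in the details (the area inequality $a(C)+a(C')>a(M)$, the injectivity of $C\mapsto\theta_C$, and the standard bound $\lambda(J)/\delta+1$ for a $\delta$-separated set in $J$ combined with $1-\epsilon>\tfrac12$).
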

\begin{proof}
Since the cylinders have area greater than $\frac 1 2 $, any pair of cylinders must share a point. The statement then follows from Lemma \ref{lem:flow}.
\end{proof}

For any interval $J=B(\theta,r)\subset S^1$ and
any $s>0$ let $J^{+s}:=B(\theta,r+s)$.

\begin{lem} \label{lem:add more}
Let $\sigma:=18\sqrt{c^{-1}}>1$ and $0<\epsilon < 1/2$. Assume that $J\subset S^1$ is an interval and $T\geq 36/(c\lambda(J))$  satisfy \eqref{eq:BC} and
\begin{equation}\label{eq:assum}
\lambda\Big(\bigcup_{\{C\in {\mathcal{C}yl}^\epsilon_\omega:L\geq \ell(C)\geq T\}}B(\theta_C, \psi(\ell(C)))\cap J\Big)<\frac {c} 9\lambda(J)
\end{equation}
for some  $L>T$. Then
\begin{align*}
\lambda\Big(\Big(&\bigcup_{\{C\in {\mathcal{C}yl}^\epsilon_\omega:\sigma L\geq \ell(C)\geq L\}}B(\theta_C, \psi(\ell(C)))\setminus \bigcup_{\{C\in {\mathcal{C}yl}^\epsilon_\omega:L\geq \ell(C)\geq T\}}B(\theta_C, \psi(\ell(C)))\Big)\cap J\Big)\\
&>\min\{{(\sigma L)^2}\psi(\sigma L),1 \}\frac{c}{4}\lambda(J).
\end{align*}
\end{lem}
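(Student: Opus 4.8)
The plan is to extract enough cylinders of length in $[L,\sigma L]$ whose direction‑balls lie inside $J$ and are essentially disjoint from the already covered set $U:=\bigcup_{\{C:\,T\leq \ell(C)\leq L\}}B(\theta_C,\psi(\ell(C)))$, which by hypothesis satisfies $\lambda(U\cap J)<\tfrac c9\lambda(J)$, and then to add up their lengths. Throughout, write $u:=(\sigma L)^2\psi(\sigma L)$, recall $c\sigma^2=324$, and note that by Lemma~\ref{lem:flow} any two cylinders in ${\mathcal{C}yl}^\epsilon_\omega$ share a point, since both have area $>1-\epsilon>1/2$.

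\emph{Counting.} Applying \eqref{eq:BC} at scale $\sigma L$ (applicable since $\sigma L>L>T$ and \eqref{eq:BC} is assumed from scale $T$ on) and Corollary~\ref{cor:sep} at scale $L$, the family $\mathcal F$ of $C\in{\mathcal{C}yl}^\epsilon_\omega$ with $L\leq\ell(C)\leq\sigma L$ and $\theta_C\in J$ satisfies $\#\mathcal F> c(\sigma L)^2\lambda(J)-2L^2\lambda(J)-1=322L^2\lambda(J)-1$. I would then pass to the concentric subinterval $J'\subset J$ with $\lambda(J')=\lambda(J)-2\psi(L)$, which is legitimate because $\psi(L)\leq 1/L\leq \tfrac{c}{36}\lambda(J)$ by the hypothesis $T\geq 36/(c\lambda(J))$; then for every $C\in\mathcal F$ with $\theta_C\in J'$ the ball $B(\theta_C,\psi(\ell(C)))$, of radius $\leq\psi(L)$, is contained in $J$. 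By Corollary~\ref{cor:sep} at most $4\sigma^2 L+1\leq 36L^2\lambda(J)+1$ members of $\mathcal F$ have $\theta_C\notin J'$ (using $cL\lambda(J)\geq36$), so $N':=\#\{C\in\mathcal F:\theta_C\in J'\}>285L^2\lambda(J)$; and by Corollary~\ref{cor:sep} the $N'$ directions involved are $\delta$‑separated with $\delta:=\tfrac{1-\epsilon}{(\sigma L)^2}>\tfrac{1}{2(\sigma L)^2}$.

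\emph{The two regimes.} Put $A':=\bigcup_{\{C\in\mathcal F:\,\theta_C\in J'\}}B(\theta_C,\psi(\ell(C)))\subseteq J$. If $u\geq 1$ then $\psi(\sigma L)\geq(\sigma L)^{-2}>\delta/2$, so the disjoint intervals $B(\theta_C,\delta/2)$ ($C\in\mathcal F$, $\theta_C\in J'$) lie inside $A'$, giving $\lambda(A')\geq N'\delta>\tfrac{285c}{648}\lambda(J)$; subtracting $\lambda(U\cap J)<\tfrac c9\lambda(J)$ yields $\lambda((A'\setminus U)\cap J)>\big(\tfrac{285}{648}-\tfrac19\big)c\lambda(J)>\tfrac c4\lambda(J)=\min\{u,1\}\tfrac c4\lambda(J)$. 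If instead $u<1$, I would set $\rho:=\min\{\psi(\sigma L),\delta/2\}$, so that the balls $B(\theta_C,\rho)$ ($C\in\mathcal F$, $\theta_C\in J'$) are pairwise disjoint, contained in $J$, and contained in $A'$. I then discard the cylinders whose ball $B(\theta_C,\rho)$ meets $U$: such directions lie in the $\rho$‑neighbourhood of $U$, and since $\rho\leq\psi(\sigma L)=u/(\sigma L)^2$ while only $O(L^2\lambda(J))$ old cylinders have a ball meeting a $\rho$‑neighbourhood of $J$ (Corollary~\ref{cor:sep} at scale $L$, together with $\psi(\ell(C'))\leq 1/T$ and $T\geq 36/(c\lambda(J))$), thickening $U$ by $\rho$ increases its measure only by $O(u\,c\,\lambda(J))$; hence the set of bad directions has measure at most $\big(\tfrac19+O(u)\big)c\lambda(J)<\tfrac{7c}{54}\lambda(J)$. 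By the $\delta$‑separation at most $2(\sigma L)^2\cdot\tfrac{7c}{54}\lambda(J)+1$ cylinders are discarded, still leaving more than $150\,L^2\lambda(J)$ whose balls $B(\theta_C,\rho)$ are pairwise disjoint, disjoint from $U$, and inside $A'\cap J$. Summing their lengths gives $\lambda((A'\setminus U)\cap J)\geq 2\rho\cdot 150\,L^2\lambda(J)$, which when $\rho=\psi(\sigma L)$ equals a constant multiple of $\tfrac{u}{\sigma^2}\lambda(J)=\tfrac{uc}{324}\lambda(J)$ exceeding $\tfrac{uc}{4}\lambda(J)$, and when $\rho=\delta/2$ (which forces $u>\tfrac{1-\epsilon}{2}$) equals a constant multiple of $\tfrac{1-\epsilon}{\sigma^2}\lambda(J)$ exceeding $\tfrac c4\lambda(J)$; in either case it exceeds $\min\{u,1\}\tfrac c4\lambda(J)$. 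Finally, $A'\setminus U$ is contained in the set appearing in the statement (it uses only cylinders of length in $[L,\sigma L]$ and removes the same $U$), so the lemma follows.

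\emph{Main obstacle.} The delicate case is $u<1$, i.e.\ when $\psi(\sigma L)$ is small: there the new direction‑balls contribute only $O\!\big((\sigma L)^2\psi(\sigma L)\cdot\lambda(J)\big)$ of measure, which may be far smaller than $\lambda(U\cap J)$, so one cannot simply subtract. What rescues the argument—and is the step requiring care—is the observation that \emph{precisely because} $\psi(\sigma L)$ is small, thickening $U$ by $\rho\leq\psi(\sigma L)$ costs only $O(uc\lambda(J))$, so all but a controlled proportion of the many, well‑separated new cylinders of length in $[L,\sigma L]$ have their (small) balls genuinely disjoint from $U$. Making this quantitative is where Corollary~\ref{cor:sep} is used essentially, both for the separation of the new directions at scale $\sigma L$ and for the count of old directions at scale $L$; the constants must be tracked carefully so that the various losses ($J$ shrunk to $J'$, bad cylinders discarded, thickening of $U$) are all absorbed by the comfortable factor $c\sigma^2=324$.
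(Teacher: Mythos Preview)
Your approach is essentially the same as the paper's: count directions of cylinders using \eqref{eq:BC} and the separation from Corollary~\ref{cor:sep}, discard those whose $\rho$-ball (with $\rho=\min\{\psi(\sigma L),\delta/2\}$) meets the old set $U$ by bounding the measure of the $\rho$-thickening of $U$ and using separation again, then sum the measures of the surviving disjoint balls; the paper differs only cosmetically (it works with all cylinders of length $\leq\sigma L$ and removes two boundary points $\partial\Theta$ rather than shrinking $J$ to $J'$, and it does not explicitly split the cases $u\gtrless 1$). The one bookkeeping slip is in your $\rho=\delta/2$ subcase: with only ``$150$'' surviving cylinders the bound $\tfrac{150(1-\epsilon)c}{324}\lambda(J)$ does not exceed $\tfrac{c}{4}\lambda(J)$ when $\epsilon$ is close to $\tfrac12$, but your own count actually gives roughly $195$ surviving cylinders (the ``$+1$'' in your discard estimate should be ``$+N$'' with $N\leq 4L^2\lambda(J)+1$, which still leaves enough), and with that the inequality goes through.
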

\begin{proof}
As $\sigma L>L>T\geq T_{\omega,J}$, by \eqref{eq:BC} and Corollary~\ref{cor:sep}, the set
\[\Theta:=J\cap\bigcup_{\{C\in {\mathcal{C}yl}^\epsilon_{\omega}:\ell(C)\leq \sigma L\}}\{\theta_C\}\subset S^1\]
has at least $c(\sigma L)^2 \lambda(J)$
points that are $\frac 1 {2(\sigma L)^2}$ separated. Denote by $\partial \Theta$ the set of two points in $\Theta $
which {are the closest to the ends} of the interval $J$. Then for every $\theta\in\Theta\setminus\partial\Theta$ we have
\begin{equation}\label{eq:BJ}
B\big(\theta,  \min\{\psi(\sigma L),(\sigma L)^{-2}/2\}\big)\subset J.
\end{equation}

Since $\psi(T)\leq 1/T\leq c\lambda(J)/36$,
by Corollary~\ref{cor:sep}, we have
\begin{equation}\label{eq:+psi}
\#\{C\in {\mathcal{C}yl}^\epsilon_\omega:L\geq \ell(C)\geq T,\;\theta_C \in J^{+\psi(T)}\} \leq 2\lambda(J^{+\psi(T)})L^2+1\leq 4\lambda(J)L^2+1.
\end{equation}
Moreover
\begin{equation}\label{eq:subset+}
\bigcup_{\{C\in {\mathcal{C}yl}^\epsilon_\omega:L\geq \ell(C)\geq T\}}B(\theta_C, \psi(\ell(C)))\cap J
\subset \bigcup_{\{C\in {\mathcal{C}yl}^\epsilon_\omega:L\geq \ell(C)\geq T,\;\theta_C \in J^{+\psi(T)}\}}B(\theta_C,\psi(\ell(C)))
\end{equation}
and
\begin{align}\label{eq:2c9}
\begin{aligned}
\lambda&\Big(\bigcup_{\{C\in {\mathcal{C}yl}^\epsilon_\omega:L\geq \ell(C)\geq T,\;\theta_C \in J^{+\psi(T)}\}}B(\theta_C,\psi(\ell(C)))\Big)\\
&\leq \lambda\Big(\bigcup_{\{C\in {\mathcal{C}yl}^\epsilon_\omega:L\geq \ell(C)\geq T\}}B(\theta_C, \psi(\ell(C)))\cap J\Big)+4\psi(T)<\frac{2c}{9}\lambda(J).
\end{aligned}
\end{align}

In view of \eqref{eq:2c9} and \eqref{eq:+psi}, the cardinality of the set $\Theta^*\subset \Theta$ of points $\theta \in \Theta$ such that
\[B\big(\theta,  \min\{\psi(\sigma L),(\sigma L)^{-2}/2\}\big)\cap\bigcup_{\{C\in {\mathcal{C}yl}^\epsilon_\omega:L\geq \ell(C)\geq T,\;\theta_C \in J^{+\psi(T)}\}}
B(\theta_C,\psi(\ell(C)))\neq \emptyset\] is at most
\begin{equation*}
3(4L^2\lambda(J)+1)+\frac {4c} 9 \lambda(J) {(\sigma L)^2}.
\end{equation*}
{Indeed, the union of $N$ intervals with total measure $\mu$ meets at most $\mu/\epsilon + N$ points which are $\epsilon$-separated.
As elements of $\Theta$ are $1/(2(\sigma L)^2)$-separated, it follows that there are at most
\[\frac{\frac{2c}{9}\lambda(J)}{\frac{1}{2(\sigma L)^2}}+4L^2\lambda(J)+1=\frac {4c} 9 \lambda(J) {(\sigma L)^2}+4L^2\lambda(J)+1\]
elements of $\Theta^*$ such that
\begin{equation}\label{eq:notin*}
\theta\in \bigcup_{\{C\in {\mathcal{C}yl}^\epsilon_\omega:L\geq \ell(C)\geq T,\;\theta_C \in J^{+\psi(T)}\}}
B(\theta_C,\psi(\ell(C))).
\end{equation}
Suppose that $\theta\in \Theta^*$ does not meet \eqref{eq:notin*}. Then $\theta$ is in the $1/(2(\sigma L)^2)$-neighbourhood
of an interval $B(\theta_C,\psi(\ell(C)))$ but it does not belong to $B(\theta_C,\psi(\ell(C)))$. As elements of $\Theta$ are $1/(2(\sigma L)^2)$-separated, for every $C$ there are at most two elements of $\Theta^*$ which do not meet \eqref{eq:notin*}. Thus
\[\#\Theta^*\leq \frac {4c} 9 \lambda(J) {(\sigma L)^2}+4L^2\lambda(J)+1+2(4L^2\lambda(J)+1)=3(4L^2\lambda(J)+1)+\frac {4c} 9 \lambda(J) {(\sigma L)^2}.\]
Since $\#\Theta\geq c(\sigma L)^2 \lambda(J)$, $\sigma=18/\sqrt{c}$ and $L>T\geq 36/(c\lambda(J))$, this gives}
\begin{align}\label{eq:intersected points}
\begin{aligned}
\#(\Theta\setminus(\partial\Theta\cup\Theta^*))&\geq c(\sigma L)^2\lambda(J)- 12L^2\lambda(J)-5-\frac {4c} 9 \lambda(J) (\sigma L)^2\\
&\geq c\frac 5 9 (\sigma L)^2 \lambda(J)-17L^2\lambda(J)>\frac 1 2 c( \sigma L)^2 \lambda(J).
\end{aligned}
\end{align}
 As $\psi$ is non-increasing, by \eqref{eq:BJ} and the definition of $\Theta^*$, for every $\theta\in \Theta\setminus(\partial\Theta\cup \Theta^*)$ we have
$B(\theta,  \min\{\psi(\sigma L),(\sigma L)^{-2}/2\})$ is a subset of
\begin{align*}
&\Big(\!\bigcup_{\{C\in {\mathcal{C}yl}^\epsilon_\omega:\sigma L\geq \ell(C)\geq L\}}B(\theta_C, \psi(\ell(C)))\!\setminus \!\bigcup_{\{C\in {\mathcal{C}yl}^\epsilon_\omega:L\geq \ell(C)\geq T,\theta_C\in J^{+\psi(T)}\}}B(\theta_C, \psi(\ell(C)))\!\Big)\cap J\\
&\subset \Big(\bigcup_{\{C\in {\mathcal{C}yl}^\epsilon_\omega:\sigma L\geq \ell(C)\geq L\}}B(\theta_C, \psi(\ell(C)))\setminus \bigcup_{\{C\in {\mathcal{C}yl}^\epsilon_\omega:L\geq \ell(C)\geq T\}}B(\theta_C, \psi(\ell(C)))\Big)\cap J,
\end{align*}
where the last inclusion follows from \eqref{eq:subset+}.
Since the centers of intervals are $(\sigma L)^{-2}/2$ separated,  by \eqref{eq:intersected points}, the measure of the last set  is at least
\[\frac 1 4 c( \sigma L)^2 \lambda(J)\min\{\psi(\sigma L),(\sigma L)^{-2}\},\]
%
which completes the proof.
\end{proof}

\begin{lem}\label{lem:diverg}
If $\psi:\mathbb{R}^+\to \mathbb{R}^+$ is bounded, non-increasing and $\int_{1}^\infty t\psi(t)=+\infty$ then for any $\sigma>1$ we have
\[\sum_{k=0}^\infty \sigma^{2k}\psi(\sigma^k)=+\infty.\]
\end{lem}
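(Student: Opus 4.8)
The plan is to compare the series with the divergent integral $\int_1^\infty t\psi(t)\,dt$ by a dyadic-type decomposition adapted to the scale $\sigma$. First I would partition $[1,\infty)$ into the intervals $[\sigma^k,\sigma^{k+1}]$ for $k\geq 0$, so that $\int_1^\infty t\psi(t)\,dt=\sum_{k=0}^\infty\int_{\sigma^k}^{\sigma^{k+1}} t\psi(t)\,dt$. On the $k$-th interval I would use the two monotonicity/size bounds available: $t\leq \sigma^{k+1}$ trivially, and $\psi(t)\leq\psi(\sigma^k)$ because $\psi$ is non-increasing. The length of the interval is $\sigma^{k+1}-\sigma^k=\sigma^k(\sigma-1)$, so
\[
\int_{\sigma^k}^{\sigma^{k+1}} t\psi(t)\,dt\;\leq\;\sigma^{k+1}\,\psi(\sigma^k)\,\sigma^k(\sigma-1)\;=\;\sigma(\sigma-1)\,\sigma^{2k}\psi(\sigma^k).
\]

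Summing over $k\geq 0$ then gives $\int_1^\infty t\psi(t)\,dt\leq \sigma(\sigma-1)\sum_{k=0}^\infty \sigma^{2k}\psi(\sigma^k)$. Since $\sigma>1$ the constant $\sigma(\sigma-1)$ is finite and positive, and since the left-hand side is $+\infty$ by hypothesis, the series $\sum_{k=0}^\infty \sigma^{2k}\psi(\sigma^k)$ must diverge. (Here the hypothesis that $\psi$ is bounded is not even needed for this direction; it is the non-increasing assumption that does the work, guaranteeing $\psi(t)\le\psi(\sigma^k)$ on each block.)

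There is essentially no serious obstacle in this argument; the only points requiring a moment of care are bookkeeping ones: making sure the monotonicity of $\psi$ is invoked in the correct direction (lower endpoint of the block bounds $\psi$ from above on the block), and that the geometric factor $\sigma^k(\sigma-1)$ for the interval length combines with $\sigma^{k+1}$ from the bound on $t$ to produce exactly $\sigma^{2k}$ up to the fixed constant $\sigma(\sigma-1)$. One could alternatively phrase the same estimate purely at the level of sums by noting $\sum_k \sigma^{2k}\psi(\sigma^k)$ is comparable to a Riemann-type sum for $\int t\psi(t)\,dt$ in the logarithmic variable, but the direct block estimate above is the cleanest route and suffices.
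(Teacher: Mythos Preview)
Your proof is correct and is essentially identical to the paper's own argument: the paper also decomposes $\int_1^\infty t\psi(t)\,dt$ as $\sum_{k\geq 0}\int_{\sigma^k}^{\sigma^{k+1}} t\psi(t)\,dt$, bounds each block by $(\sigma^{k+1}-\sigma^k)\sigma^{k+1}\psi(\sigma^k)=\sigma(\sigma-1)\sigma^{2k}\psi(\sigma^k)$, and concludes. Your observation that the boundedness hypothesis on $\psi$ is not actually used here is also accurate.
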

\begin{proof}
Lemma follows directly from the following
\begin{align*}
\int_{1}^\infty t\psi(t)dt=\sum_{k=0}^\infty\int_{\sigma^k}^{\sigma^{k+1}}t\psi(t)dt\leq \sum_{k=0}^\infty (\sigma^{k+1}-\sigma^k)\sigma^{k+1}\psi(\sigma^k)=
\sum_{k=0}^\infty  \sigma(\sigma-1)\sigma^{2k}\psi(\sigma^k).
\end{align*}
\end{proof}
\begin{proof}[Proof of Proposition \ref{thm:approx}]
Recall that we only need to show the second part (we already know that \eqref{eq:BC} holds).

To prove the result we need {to} show that for every interval $J\subset S^1$ there exists $k\geq \log_\sigma T$ such that
\begin{equation}\label{eq:assum1}
\lambda\Big(\bigcup_{\{C\in {\mathcal{C}yl}^\epsilon_\omega:\sigma^k\geq \ell(C)\geq T\}}B(\theta_C, \psi(\ell(C)))\cap J\Big)\geq\frac {c} 9\lambda(J).
\end{equation}
Suppose, contrary to our claim, that for all $k\geq \log_\sigma T$ \eqref{eq:assum1} does not hold.
By Lemmas \ref{lem:add more} and \ref{lem:diverg},  we have
\begin{align*}
&\lambda\Big(\bigcup_{\{C\in {\mathcal{C}yl}^\epsilon_\omega:\ell(C)\geq T\}}B(\theta_C, \psi(\ell(C)))\cap J\Big)\\
&\geq\sum_{k\geq\log_{\sigma}(T)}
\lambda\Big(\Big(\bigcup_{\{C\in {\mathcal{C}yl}^\epsilon_\omega:\sigma^{k+1}\geq \ell(C)\geq \sigma^k\}}B(\theta_C, \psi(\ell(C)))\setminus \bigcup_{\{C\in {\mathcal{C}yl}^\epsilon_\omega:\sigma^k\geq \ell(C)\geq T\}}B(\theta_C, \psi(\ell(C)))\Big)\cap J\Big)\\
&\geq \sum_{k\geq\log_{\sigma}(T)}\min\{{\sigma^{2k}}\psi(\sigma^k),1 \}\frac{c}{4}\lambda(J)=+\infty,
\end{align*}
which is a contradiction.
\end{proof}

\subsection*{Acknowledgements}
The authors would like to thank the Banach Center program in B\k{e}dlewo in summer 2018 {Conference ``Ergodic aspects of modern dynamics'' in honour of Mariusz Lema\'nczyk on his 60th birthday, B\k{e}dlewo, Poland, 10-16 June, 2018}, which played a key role to foster the authors collaboration and bring this project to successful conclusion.
For their hospitality  at the various initial stages of this work, they would also like to thanks the School of Mathematics of the University of Bristol and the Mathematics Institute of the University of Z{\"u}rich.  C.~U.\ acknowledges the ERC Grant {\it ChaParDyn} which supported her and several collaboration visits.
J.~C.\ thanks NSF grant DMS-1452762 and a Warnock chair.
A.~K.\ was partially supported by the NSF grant DMS-1956310.
The research leading to these results has received funding from the European Research Council under the European Union
Seventh Framework Programme (FP/2007-2013) / ERC Grant Agreement n.~335989. Research partially supported by the Narodowe Centrum Nauki Grant 2019/33/B/ST1/00364.

{The authors are also indebted to the anonymous referee, for his/her careful job and corrections on the first draft of this paper.}


\begin{thebibliography}{99}

\bibitem[Arn]{Ar:top}
 V.~I.\ Arnold,
 Topological and ergodic properties of closed $1$-forms with incommensurable periods,
 \emph{Funktsional.\ Anal.\ i Prilozhen.} \textbf{25} (1991),  1--12;
 translation in \emph{Funct.\ Anal.\ Appl.} \textbf{25} (1991),  81--90.


\bibitem[AF]{AF:wea}
A.~Avila, G.~Forni,
\newblock Weak mixing for interval exchange transformations and translation
  flows.
\newblock \emph{Ann.\ of Math.} \textbf{165} (2007), 637--664.  

\bibitem[BK]{BK}
 P.~Berk, A.~Kanigowski,
 Spectral disjointness of rescalings of some surface flows,
 preprint,  \url{arXiv:1901.04724}.




\bibitem[Ch]{Ch}
 J.\ Chaika,
 {Homogeneous approximations on translation surfaces},
 preprint, \url{arXiv:1110.6167}.

	
\bibitem[CW]{CW}
 J.~Chaika, A.~Wright,
 A smooth mixing flow on a surface with nondegenerate fixed points,
 \emph{J.\ Amer.\ Math.\ Soc.} \textbf{32} (2019), 81--117.

\bibitem[CF]{Co-Fr}
 J.-P.\ Conze, K. Fr\k{a}czek,
 Cocycles over interval exchange transformations and multivalued Hamiltonian flows,
 \emph{Adv.\ Math.} \text{226} (2011), 4373--4428.

\bibitem[CFS]{CFS:erg}
I.~P.~Cornfeld, S.~V.~Fomin, Ya.~G.~Sinai.
\newblock \emph{Ergodic Theory}.
\newblock Springer-Verlag, 1980.


\bibitem[EsMa]{Es-Ma}
 A.\ Eskin, H.\ Masur,
 {Asymptotic formulas on flat surfaces},
 \emph{Ergodic Theory Dynam.\ Systems} \textbf{21} (2001), 443-478.

\bibitem[Fa01]{Fa1}
 B.~Fayad,
 Polynomial decay of correlations for a class of smooth flows on the two torus,
 \emph{Bull.\ Soc.\ Math.\ France} \textbf{129} (2001),  487--503.

\bibitem[Fa02]{Fa2}
 \bysame,
 Analytic mixing reparametrizations of irrational flows,
 \emph{Ergodic Theory Dynam.\ Systems} \textbf{22}  (2002), 437--468.

\bibitem[FFK]{FFK}
 B.~Fayad, G.~Forni, A.~Kanigowski,
 Lebesgue spectrum of countable multiplicity for conservative flows on the torus,
 preprint, \url{arXiv:1609.03757}.

\bibitem[FK]{FK}
 B.~Fayad, A.~Kanigowski,
 Multiple mixing for a class of conservative surface flows,
 \emph{Invent.\ Math.} \textbf{203}   (2016), 555--614.


\bibitem[Fe]{Fe} S.~Ferenczi,
 Systems of finite rank,
 \emph{Colloq.\ Math.} \textbf{73} (1997), 35--65.

\bibitem[FoMa]{FM}
 G.~Forni, C.~Matheus,
 Introduction to Teichm\"uller theory and its applications to dynamics of interval exchange transformations, flows on surfaces and billiards,
 in ``Lectures from the Bedlewo  Summer School 2011''  edited by F. Rodriguez Hertz.
 \emph{J.\ Mod.\ Dyn.} \textbf{8}  (2014), 271--436.

\bibitem[FL03]{Fr-Le0}
 K.\ Fr\k{a}czek, M.\ Lema\'nczyk,
\newblock On symmetric logarithm and some old examples in smooth ergodic
  theory.
\newblock \emph{Fund.\ Math.}, \textbf{180} (2003), 241--255.


\bibitem[FL04]{Fr-Le1}
 \bysame,
 {A class of special flows over irrational rotations which is disjoint from mixing flows},
 \emph{Ergodic Theory Dynam.\ Systems} \textbf{24} (2004), 1083--1095.

\bibitem[FL05]{Fr-Le2}
 \bysame,
 {On disjointness properties of some smooth flows},
 \emph{Fund.\ Math.} \textbf{185} (2005), 117--142.




\bibitem[FU]{Fr-Ul}
  K.\ Fr\k{a}czek, C.\ Ulcigrai,
	Ergodic properties of infinite extensions of area-preserving flows,
	\emph{Math.\ Ann.} \textbf{354} (2012), 1289--1367.






\bibitem[KKU]{KKU}
 A.~Kanigowski,  J.~Kulaga-Przymus, C.~Ulcigrai,
 Multiple mixing and parabolic divergence in smooth area-preserving flows on higher genus surfaces,
 \emph{J.\ Eur.\ Math.\ Soc.} \textbf{21} (2019), 3797-3855.

\bibitem[KLU]{KLU}
 A.~Kanigowski, M.\ Lema\'nczyk, C.~Ulcigrai,
On disjointness properties of some parabolic flows,
\emph{Invent.\ Math.} \textbf{221} (2020), 1-111.


\bibitem[Ka73]{Ka:inv}
 A.~B.\ Katok,
 Invariant measures of flows on orientable surfaces, (Russian)
 \emph{Dokl.\ Akad.\ Nauk SSSR} \textbf{211} (1973), 775--778.

\bibitem[Ka80]{Ka:int}
 \bysame,
\newblock Interval exchange transformations and some special flows are not
  mixing.
\newblock \emph{Israel J.\ Math.} \textbf{35}  (1980), 301--310. 

\bibitem[Ka01]{Kat:CC1}
 \bysame,
 Cocycles, cohomology and combinatorial constructions in ergodic theory (in collaboration with E. A. Robinson, Jr.),
 in \emph{Smooth Ergodic Theory and its applications}, Proc.\ Symp.\ Pure Math., \textbf{69} (2001), 107--173.

\bibitem[Ka03]{Kat:CC2}
 \bysame,
 \newblock {C}ombinatorial {C}onstructions in {E}rgodic {T}heory and {D}ynamics.
 \newblock University Lecture Series Vol. 30.
 \newblock American Mathematical Society, Providence, RI, 2003,  

\bibitem[KT]{KT}
 A.~B.~Katok, J.-P.~Thouvenot,
 Spectral Properties and Combinatorial Constructions in Ergodic Theory,
 in \emph{Handbook of dynamical systems}, Vol.~1B, 649~743, Elsevier B.V., Amsterdam, 2006.

\bibitem[Kea]{Keane}
 M.~Keane,
 Interval Exchange Transformations,
 \emph{Math.\ Z.} \textbf{141} (1975), 25--31.


\bibitem[KS]{SK:mix}
 K.~M.~Khanin, Ya.~G.~Sinai,
 Mixing for some classes of special flows over rotations of the circle,
 \emph{Funktsional.\ Anal.\ i Prilozhen.} \textbf{26} (1992), 1--21;
 translation in \emph{Funct.\ Anal.\ Appl.} \textbf{26} (1992),  155--169.

\bibitem[Ko72]{Ko:abs}
 A.~V.~Ko{\v c}ergin,
 The absence of mixing in special flows over a rotation of the circle and in flows on a two-dimensional torus,
 \emph{Dokl.\ Akad.\ Nauk SSSR} \textbf{205} (1972), 512--518;
 translated in \emph{Soviet Math.\ Dokl.} \textbf{13} (1972),  949--952.

\bibitem[Ko75]{Ko:mix}
 \bysame,
 Mixing in special flows over a shifting of segments and in smooth flows on surfaces,
 \emph{Mat.\ Sb.\ (N.S.)}  \textbf{96 (138)}  (1975),  471--502;
 translated in \emph{Math.\ USSR-Sb.} \textbf{25} (1975),  441--469.

\bibitem[Ko03]{Ko:03}
 \bysame
 Nondegenerate fixed points and mixing in flows on a two-dimensional torus,
 \emph{Mat.\ Sb.} \textbf{194} (2003), 83--112;
  translation in \emph{Sb.\ Math.} \textbf{194} (2003), 1195--1224.

\bibitem[Ko04]{Ko:04}
 \bysame
 Nondegenerate fixed points and mixing in flows on a two-dimensional torus. II,
 \emph{Mat.\ Sb.} \textbf{195} (2004), 15--46;
 translation in \emph{Sb.\ Math.} 195 (2004), 317--346.

\bibitem[Ko04']{Ko:04'}
 \bysame
 Some generalizations of theorems on mixing flows with nondegenerate saddles on a two-dimensional torus,
 \emph{Mat.\ Sb.} \textbf{195} (2004), 19--36;
 translation in \emph{Sb.\ Math.} \textbf{195} (2004),  1253--1270.

\bibitem[Ko07]{Ko:07}
 \bysame,
 \newblock Nondegenerate saddles and the absence of mixing in flows on surfaces,
 \newblock \emph{Proc.\ Steklov Inst.\ Math.} \textbf{256}, (2007), 238--252. 

\bibitem[Kol]{Kolmogorov}
 A.~N.~Kolmogorov.
 \newblock  On dynamical systems with an integral invariant on the torus,
 \newblock \emph{Dokl.\ Akad.\ Nauk SSSR (N.S.)} \textbf{93} (1953), 763--766 (in Russian).



\bibitem[L]{L}
M.\ Lema\'nczyk,
 Spectral theory of dynamical systems,
 \emph{Mathematics of complexity and dynamical systems}, Vols.\ 1-3, 1618--1638, Springer, New York, 2012.

\bibitem[Lev]{Le:feu}
 G.\ Levitt,
 Feuilletages des surfaces,
 \emph{Ann.\ Inst.\ Fourier (Grenoble)} \textbf{32} (1982), 179--217  (in French).


\bibitem[Mai]{Ma:tra}
 A.~G.~Maier,
 \newblock Trajectories on closed orientable surfaces.
 \newblock \emph{Mat.\ Sb.}, \textbf{12(54)} (1943), 71--84
\newblock (in {R}ussian).


\bibitem[MaTrWe]{Ma-Tr-We}
 L.\ Marchese, R.\ Trevi{\~n}o, S.\ Weil,
 \emph{Diophantine approximations for translation surfaces and planar resonant sets},
 Comment.\ Math.\ Helv.\ \textbf{93} (2018), 225--289.


\bibitem[Mas82]{Ma:int}
 H.~ Masur.
\newblock Interval exchange transformations and measured foliations,
\newblock {\em Ann.\ of Math.} \textbf{115} (1982), 169--200. 


\bibitem[Mas06]{Ma:erg}
\bysame.
\newblock Ergodic theory of translation surfaces,
\newblock \emph{Handbook of Dynamical Systems}, Vol.~1B, pp.\ 527--547. Elsevier B.~V., Amsterdam., 2006. 

\bibitem[NiZh]{NZ:flo}
 I.\ Nikolaev, E.\ Zhuzhoma,
 Flows on $2$-dimensional manifolds. An overview.
 \emph{Lecture Notes in Mathematics}, 1705. Springer-Verlag, Berlin, 1999. xx+294 pp.

\bibitem[Nov]{No:the}
 S.~P. Novikov.
 \newblock The {H}amiltonian formalism and a multivalued analogue of {M}orse theory,
 \newblock \emph{Uspekhi Mat.\ Nauk}, \textbf{37} (1982), 3--49;
 translated in \emph{Russian Math.\ Surveys}, \textbf{37} (1982), 1--56.



\bibitem[Rav]{Ra:mix}
 D.~Ravotti,
 Quantitative mixing for locally Hamiltonian flows with saddle loops on compact surfaces,
 \emph{Ann.\ H.\ Poincar\'e} \textbf{18} (2017), 3815--3861.



\bibitem[Sc]{Sch:abs}
 D.~Scheglov.
 \newblock Absence of mixing for smooth flows on genus two surfaces,
 \newblock \emph{J.\ Mod.\ Dyn.} \textbf{3} (2009), 13--34. 



\bibitem[Ul07]{Ul:mix}
 C.~Ulcigrai,
 Mixing of asymmetric logarithmic suspension flows over interval exchange transformations,
 \emph{Ergodic Theory Dynam.\ Systems} \textbf{27}  (2007), 991--1035.

\bibitem[Ul09]{Ul:wea}
 \bysame,
 \newblock Weak mixing for logarithmic flows over interval exchange transformations.
 \newblock \emph{J.\  Mod.\ Dyn.} \textbf{3}   (2009), 35--49. 

\bibitem[Ul11]{Ul:abs}
 \bysame,
 Absence of mixing in area-preserving flows on surfaces,
 \emph{Ann.\ of Math.} \textbf{173} (3) (2011), 1743--1778.


\bibitem[Ve]{Ve:gau}
 W.~A.~Veech.
\newblock Gauss measures for transformations on the space of interval exchange maps.
\newblock \emph{Ann.\ of Math.}  \textbf{115} (1982), 201--242. 

\bibitem[Vi]{Vi}
 M.\ Viana,
 \emph{Dynamics of Interval Exchange Transformations and Teichm\"uller Flows}, Lecture notes of graduate courses taught at IMPA in 2005 and 2007; available on \url{w3.impa.br/\%7Eviana/out/ietf.pdf}.


\bibitem[Vor]{Vor}
  Y.\ Vorobets,
  {Periodic geodesics on generic translation surfaces},
  \emph{Algebraic and topological dynamics}, 205-258, Contemp.\ Math., 385, Amer.\ Math.\ Soc., Providence, RI, 2005.


\bibitem[Yo]{Yo}
 J.-C. Yoccoz,
 Interval exchange maps and translation surfaces. In  \emph{Homogeneous Flows, Moduli Spaces and Arithmetic} (Einsiedler, M., et al.\
 editors). Proceedings of the Clay Mathematics Institute Summer School, Centro di Ricerca Matematica Ennio De Giorgi, Pisa, Italy, June 11--July 6, 2007. \emph{Clay Mathematics Proceedings}, vol.\ \textbf{10}, pp. 1--69. AMS/Clay Mathematics Institute, Providence/Cambridge (2010)

\bibitem[Zor]{Zo:how}
 A.~Zorich,
 How do the leaves of a closed 1-form wind around a surface?
\emph{Pseudoperiodic topology}, 135--178, Amer.\ Math.\ Soc.\ Transl.\ Ser.\ 2, 197, Adv.\ Math.\ Sci., 46, Amer.\ Math.\ Soc., Providence, RI, 1999.


\end{thebibliography}
\end{document}